\setlist[enumerate,1]{label=\textup{(\arabic*)}}% ensure that enumerations in theorems are upright
\tikzset{cd/.style=matrix of math nodes,row sep=2em,column sep=2em, text height=1.5ex, text depth=0.5ex}
\tikzset{cdar/.style=->,auto}
\tikzset{overar/.style={draw=white,double=black,double distance=.4pt,very thick}}
\renewcommand{\PrintDOI}[1]{\href{http://dx.doi.org/\detokenize{#1}}{doi: \detokenize{#1}}}
\numberwithin{equation}{section}
\theoremstyle{plain}
\newtheorem{theorem}[equation]{Theorem}
\newtheorem{lemma}[equation]{Lemma}
\newtheorem{proposition}[equation]{Proposition}
\theoremstyle{definition}
\newtheorem{definition}[equation]{Definition}
\theoremstyle{remark}
\newtheorem{remark}[equation]{Remark}
\newcommand{\tenscorep}{\mathbin{\begin{tikzpicture}[baseline,x=.75ex,y=.75ex] \draw[line width=.2pt] (-0.8,1.15)--(0.8,1.15);\draw[line width=.2pt](0,-0.25)--(0,1.15); \draw[line width=.2pt] (0,0.75) circle [radius = 1];\end{tikzpicture}}}
\newcommand*{\Braiding}[2]{\begin{tikzpicture}[baseline]
    \draw[-] (0,0) -- (1.4ex,1.4ex) node[right,inner sep=0pt] {$\scriptstyle #2$};
    \draw[-,draw=white,line width=2.4pt] (0,1.4ex) -- (1.4ex,0);
    \draw[-] (1.4ex,0) -- (0,1.4ex) node[left,inner sep=0pt] {$\scriptstyle #1$};
  \end{tikzpicture}}
\newcommand*{\Dualbraiding}[2]{\begin{tikzpicture}[baseline]
    \draw[-] (1.4ex,0) -- (0,1.4ex) node[left,inner sep=0pt] {$\scriptstyle #1$};
    \draw[-,draw=white,line width=2.4pt] (0,0) -- (1.4ex,1.4ex);
    \draw[-] (0,0) -- (1.4ex,1.4ex) node[right,inner sep=0pt] {$\scriptstyle #2$};
  \end{tikzpicture}}
\newcommand*{\corep}[1]{\textup{#1}}          %Corepresentation of a quantum group
\newcommand*{\Corep}[1]{\mathbb{#1}}          %Corepresentation as operator on Hilbert space
\newcommand*{\DuCorep}[1]{\hat{\Corep{#1}}}   %Dual corepresentation viewed as operator on Hilbert space
\newcommand*{\ket}[1]{\lvert#1\rangle}% parts of rank-one operators
\newcommand*{\bra}[1]{\langle#1\rvert}
\newcommand*{\Nhatdrinf}{\hat{\mathcal{N}}}
\newcommand*{\udrinf}{\mathcal U}
\newcommand*{\dom}{\mathcal D}% domain of an unbounded linear map
\newcommand*{\diff}{\textup d}%used for dx in integrals etc.
\newcommand*{\nb}{\nobreakdash}
\newcommand*{\alb}{\hspace{0pt}} % allow break in following word
\newcommand*{\Star}{$^*$\nb-}
\newcommand*{\C}{\mathbb C}
\newcommand*{\Z}{\mathbb Z}
\newcommand*{\R}{\mathbb R}
\newcommand*{\N}{\mathbb N}
\newcommand*{\T}{\mathbb T}
\newcommand*{\G}[1][G]{\mathbb #1}%quantum group
\newcommand*{\DuG}[1][G]{\hat{\mathbb{#1}}}
\newcommand*{\Comult}[1][]{\Delta_{#1}}%comultiplication
\newcommand*{\DuComult}[1][]{\hat{\Delta}_{#1}}%dualcomultiplication
\newcommand*{\Qgrp}[2]{\mathbb{#1}=(#2,\Comult[#2])}%quantum group
\newcommand*{\Mod}[1]{\abs{#1}}% modulus of polar decomposition
\newcommand*{\Ph}[1]{\Phi_{#1}}% phase of polar decomposition
\newcommand*{\Bound}{\mathbb B}%adjointable operators on a Hilbert module
\newcommand*{\Comp}{\mathbb K}%compact operators on a Hilbert module
\newcommand*{\ima}{\textup i}%imaginary unit
\newcommand*{\univ}{\textup u}%universal
\newcommand*{\transpose}{\mathsf T}%transpose
\newcommand*{\Contvin}{\textup C_0}%continuous functions vanishing at infinity
\newcommand*{\Contb}{\textup C_\textup b}%bounded continuous functions
\newcommand*{\Cont}{\textup C}%continuous functions
\newcommand*{\Mor}{\textup{Mor}}%nondegenerate *-homomorphisms of C*-algebras
\newcommand*{\Id}{\textup{id}}%identity map
\newcommand*{\Multunit}[1][]{\mathbb W^{#1}}%muliplicative unitary on action on a hilbert space
\newcommand*{\multunit}[1][]{\textup W^{#1}}%multiplicative unitary as a bicharacter of the multiplier algebra
\newcommand*{\DuMultunit}[1][]{\widehat{\mathbb W}^{#1}}%muliplicative unitary on action on a hilbert space
\newcommand*{\Dumultunit}[1][]{\widehat{\textup W}^{#1}}%multiplicative unitary as a bicharacter of the multiplier algebra
\newcommand*{\BrMultunit}{\mathbb F}%braided muliplicative unitary on action on a hilbert space
\newcommand*{\DuBrMultunit}{\widehat{\BrMultunit}}% dual of a braided multiplicative unitary
\newcommand*{\brmultunit}{\textup F}%braided multiplicative unitary as a bicharacter of the multiplier algebra
\newcommand*{\Bichar}{\mathbb{V}}%Bicharacter viewed as an unitary acting on Hilbert space
\newcommand*{\bichar}{\textup V}%bicharacter viewed as an element of of the unitary multiplier
\newcommand*{\ProjBichar}{\mathbb{P}}%projection bicharacter viewed as a unitary acting on Hilber space
\newcommand*{\projbichar}{\textup P}%projection bicharacter viewed as an element of of the unitary multiplier
\newcommand*{\DuProjBichar}{\widehat{\ProjBichar}}%dual projection bicharacter viewed as a unitary acting on Hilber space
\newcommand*{\Duprojbichar}{\hat{\projbichar}}%dual projection bicharacter viewed as an element of of the unitary multiplier
\newcommand*{\Flip}{\Sigma}% flip operator on Hilbert space
\newcommand*{\flip}{\sigma}% flip map on the multiplier algebra
\newcommand*{\Cst}{\textup C^*}%C*-algebra
\newcommand*{\Cred}{\textup C^*_\textup r}%reduced group C*-algebra
\newcommand*{\Hils}[1][H]{\mathcal{#1}}%Hilbert space
\newcommand*{\Mult}{\mathcal M}%multiplier algebra
\newcommand*{\U}{\mathcal U}%unitary group
\newcommand*{\defeq}{\mathrel{\vcentcolon=}}
\newcommand*{\abs}[1]{\lvert#1\rvert}
\newcommand*{\conj}[1]{\overline{#1}}
\newcommand*{\cl}[1]{\overline{#1}}
\DeclareMathOperator{\Aut}{Aut}% automorphism group of a C*-algebra
\begin{document}
\title[Semidirect products of C\textsuperscript{*}-quantum groups]{Semidirect products of C\textsuperscript{*}-quantum groups: multiplicative unitaries approach}

\author{Ralf Meyer}
\email{rmeyer2@uni-goettingen.de}
\address{Mathematisches Institut\\
  Georg-August Universität Göttingen\\
  Bunsenstraße 3--5\\
  37073 Göttingen\\
  Germany}

\author{Sutanu Roy}
\email{sutanu.roy@carleton.ca}
\address{School of Mathematics and Statistics\\
  Carleton University\\
  1125 Colonel By Drive\\
  K1S 5B6 Ottawa\\
  Canada.}

\author{Stanisław Lech Woronowicz}
\email{Stanislaw.Woronowicz@fuw.edu.pl}
\address{Instytut Matematyczny Polskiej Akademii Nauk\\ul.\@ Śniadeckich 8\\00-656 Warszawa\\Poland, and\\Katedra Metod Matematycznych Fizyki, Wydział Fizyki\\Uniwersytet Warszawski\\ul.\@ Pasteura 5\\02-093 Warszawa\\Poland}

\begin{abstract}
  \(\Cst\)\nb-quantum groups with projection are the noncommutative
  analogues of semidirect products of groups.  Radford's Theorem about
  Hopf algebras with projection suggests that any \(\Cst\)\nb-quantum
  group with projection decomposes uniquely into an ordinary
  \(\Cst\)\nb-quantum group and a ``braided'' \(\Cst\)\nb-quantum
  group.  We establish this on the level of manageable multiplicative
  unitaries.
\end{abstract}

\subjclass[2000]{46L89 (81R50 18D10 )}
\keywords{quantum group, braided quantum group, semidirect product,
  bosonisation, multiplicative unitary, braided multiplicative
  unitary, quantum E(2) group}

\thanks{Supported by the German Research Foundation (Deutsche
  Forschungsgemeinschaft (DFG)) through the Research Training Group
  1493.  The second author was also supported by a Fields--Ontario
  postdoctoral fellowship.  The third author was partially supported
  by the Alexander von Humboldt-Stiftung and the National Science
  Center (NCN), grant 2015/17/B/ST1/00085.}

\maketitle

\section{Introduction}
\label{sec:introduction}

Many important Lie groups like the Poincar\'e group or the group of
motions of Euclidean space are defined as semidirect products of
smaller building blocks.  What is the quantum group analogue of a
semidirect product?  Such a notion should be useful to understand
quantum deformations of semidirect products.

For a semidirect product of groups, we need two groups \(G\)
and~\(H\) and an action of~\(G\) on~\(H\) by group automorphisms.  Since
non-commutative quantum groups cannot act on other quantum groups by
automorphisms, we need a different point of view: semidirect product
groups are the same as groups with a projection.  A semidirect
product of groups~\(G\ltimes H\) comes with a canonical group
homomorphism
\[
p\colon G\ltimes H\to G\ltimes H,\qquad
(g,h)\mapsto (g,1_H),
\]
which is idempotent, that is, \(p^2=p\).
Its kernel and image are \(H\subseteq G\ltimes H\)
and \(G\subseteq G\ltimes H\),
respectively.  The conjugation action of~\(G\)
on~\(H\)
needed for a semidirect product is the restriction of the conjugation
action of~\(G\ltimes H\)
on itself.  Therefore, an idempotent group homomorphism
\(p\colon K\to K\)
on a group~\(K\)
is equivalent to a semidirect product decomposition of~\(K\).

Now consider a quantum group with a projection, that is, with an
idempotent quantum group endomorphism.  What corresponds to the
building blocks \(G\) and~\(H\) in a semidirect product of groups?  If
``quantum group'' means ``Hopf algebra,'' then a theorem by
Radford~\cite{Radford:Hopf_projection} answers this question.  Here we
consider \(\Cst\)\nb-quantum groups, meaning
\(\Cst\)\nb-bialgebras coming from manageable multiplicative unitaries
(see~\cites{Woronowicz:Mult_unit_to_Qgrp,
  Soltan-Woronowicz:Multiplicative_unitaries}).  More precisely, we
work on the level of the multiplicative unitaries themselves to avoid
analytical difficulties.

Let us first recall Radford's Theorem.  It splits a Hopf algebra~\(C\)
with a projection \(p\colon C\to C\) into two pieces \(A\) and~\(B\).
The ``image'' of the projection~\(A\) is a Hopf algebra as well.  The
``kernel'' of the projection~\(B\) is only a Hopf algebra in a certain
braided monoidal category, namely, the category of Yetter--Drinfeld
modules over~\(A\).  The tensor product of two Yetter--Drinfeld
algebras is again a Yetter--Drinfeld algebra, for the diagonal
Yetter--Drinfeld module structure and a certain deformed
multiplication.  The comultiplication on~\(B\) is a homomorphism to
the deformed tensor product \(B\boxtimes B\).

Radford's Theorem contains two constructions.  One puts together \(A\)
and~\(B\) into their ``semidirect product''~\(C\) and describes the
projection~\(p\) on~\(C\).  The other splits~\(C\) into the two
factors \(A\) and~\(B\), with the Hopf algebra structure on~\(A\) and
the \(A\)-Yetter--Drinfeld algebra and braided Hopf algebra structure
on~\(B\).  The first construction is called ``bosonisation'' by
Majid~\cite{Majid:Hopfalg_in_BrdCat}.  The analogue of this
construction for \(\Cst\)\nb-quantum groups is described
in~\cite{Meyer-Roy-Woronowicz:Twisted_tensor_2}, except for the
projection that we expect on this semidirect product.  In particular,
the appropriate analogues of Yetter--Drinfeld algebras and their
deformed tensor product~\(\boxtimes\) are described
in~\cite{Meyer-Roy-Woronowicz:Twisted_tensor_2} for arbitrary
\(\Cst\)\nb-quantum groups.  For regular \(\Cst\)\nb-quantum groups
with Haar weights, this is already done by Nest and
Voigt~\cite{Nest-Voigt:Poincare}.

The ``projections'' on \(\Cst\)\nb-quantum groups that we use are
morphisms as introduced in \cites{Ng:Morph_of_Mult_unit,
  Meyer-Roy-Woronowicz:Homomorphisms}.  That is, a quantum group
morphism from \((C,\Comult[C])\) to \((A,\Comult[A])\) is a
bicharacter in \(\U\Mult(\hat{C}\otimes A)\).  Several equivalent
descriptions of such morphisms are given
in~\cite{Meyer-Roy-Woronowicz:Homomorphisms}, including functors
between the categories of \(\Cst\)\nb-algebra coactions that preserve
the underlying \(\Cst\)\nb-algebra, and Hopf \Star{}homomorphisms
between the associated universal quantum groups.  These are more
general than Hopf \Star{}homomorphisms between the reduced quantum
group \(\Cst\)\nb-algebras.

Thus a \(\Cst\)\nb-quantum group with projection consists of a
\(\Cst\)\nb-quantum group~\((C,\Comult[C])\) with a unitary multiplier
\(\projbichar\in\U\Mult(\hat{C}\otimes C)\) with certain properties.
To express these, we use a manageable multiplicative unitary
\(\Multunit\in\U(\Hils\otimes\Hils)\) that generates~\(C\); in
particular, \(\Multunit\) satisfies the pentagon equation
\begin{equation}
  \label{eq:Multunit_pentagon}
  \Multunit_{23}\Multunit_{12} =
  \Multunit_{12}\Multunit_{13}\Multunit_{23}
  \qquad\text{in }\U(\Hils\otimes\Hils\otimes\Hils).
\end{equation}
Then \(C\) and~\(\hat{C}\) act faithfully on~\(\Hils\).
Write~\(\ProjBichar\) for~\(\projbichar\) viewed as an operator on
\(\Hils\otimes\Hils\).  The condition that~\(\projbichar\) is a
bicharacter is equivalent to
\begin{equation}
  \label{eq:intro_pentagonal}
  \ProjBichar_{23}\Multunit_{12}
  = \Multunit_{12}\ProjBichar_{13}\ProjBichar_{23}
  \quad\text{and}\quad
  \Multunit_{23} \ProjBichar_{12}
  = \ProjBichar_{12}\ProjBichar_{13}\Multunit_{23}
  \qquad\text{in }\U(\Hils\otimes\Hils\otimes\Hils).
\end{equation}
The condition that~\(\projbichar\) is idempotent for the composition
of quantum group homomorphisms is equivalent to the pentagon equation
for~\(\ProjBichar\):
\begin{equation}
  \label{eq:PR_pentagon}
  \ProjBichar_{23}\ProjBichar_{12} =
  \ProjBichar_{12}\ProjBichar_{13}\ProjBichar_{23}
  \qquad\text{in }\U(\Hils\otimes\Hils\otimes\Hils).
\end{equation}
Thus a \(\Cst\)\nb-quantum group with projection is determined by two
unitaries \(\Multunit,\ProjBichar\in
\U(\Hils\otimes\Hils)\) that satisfy
\eqref{eq:Multunit_pentagon}--\eqref{eq:PR_pentagon}; in addition,
\(\Multunit\) must be manageable.  Equation~\eqref{eq:PR_pentagon}
means that~\(\ProjBichar\) is a multiplicative unitary in its own
right.  It is manageable if~\(\Multunit\) is.  The \(\Cst\)\nb-quantum
group~\((A,\Comult[A])\) it generates is the image of the projection.

It is much more difficult to describe the other
factor~\(B\).  As a \(\Cst\)\nb-algebra, it should be the generalised
fixed-point algebra for a canonical coaction of~\((A,\Comult[A])\)
on~\((C,\Comult[C])\).  In the group case, this says that
\(\Cont_0(H)\) is the generalised fixed-point algebra for the left or
right translation action of~\(G\) on~\(\Cont_0(G\ltimes H)\).
Unless~\(G\) is compact, this requires Rieffel's generalisation of
fixed-point algebras to group actions that are ``proper'' in a
suitable sense (see \cites{Rieffel:Integrable_proper,
  Meyer:Generalized_Fixed}).
Buss~\cites{Buss:thesis,Buss:GFPAQuantum} has generalised this theory
to locally compact quantum groups.  We only need the special case of
quantum homogeneous spaces, which is also treated by
Vaes~\cite{Vaes:Induction_Imprimitivity}.  All these approaches need
some regularity assumptions on~\((A,\Comult[A])\) and are technically
difficult.

We may avoid these difficulties by staying on the level of
multiplicative unitaries.  We already described a \(\Cst\)\nb-quantum
group with projection through two multiplicative unitaries
\(\Multunit,\ProjBichar\in \U(\Hils\otimes\Hils)\) on the
same Hilbert space that are linked by the
conditions~\eqref{eq:intro_pentagonal}.  We find that any such pair
comes from a ``braided multiplicative unitary'' over the
\(\Cst\)\nb-quantum group \((A,\Comult[A])\) generated
by~\(\ProjBichar\).

A braided multiplicative unitary is a unitary
\(\BrMultunit\in\U(\Hils[L]\otimes\Hils[L])\) for a Hilbert
space~\(\Hils[L]\) with a Yetter--Drinfeld module structure over
\((A,\Comult[A])\).  That is, \(\Hils[L]\) carries corepresentations
\(\corep{U}\in\U(\Comp(\Hils[L])\otimes A)\) and
\(\corep{V}\in\U(\Comp(\Hils[L])\otimes \hat{A})\) that are linked by
a Yetter--Drinfeld commutation relation.  In addition, \(\BrMultunit\)
is equivariant for the tensor product corepresentations
\(\corep{U}\tenscorep\corep{U}\) and \(\corep{V}\tenscorep\corep{V}\)
on~\(\Hils[L]\otimes\Hils[L]\) and satisfies the \emph{braided
  pentagon equation}:
\begin{equation}
  \label{eq:intro_braided_pentagon}
  \BrMultunit_{23}\BrMultunit_{12}
  = \BrMultunit_{12}(\Braiding{\Hils[L]}{\Hils[L]})_{23}
  \BrMultunit_{12}(\Braiding{\Hils[L]}{\Hils[L]})^*_{23}
  \BrMultunit_{23}
  \qquad\text{in }\U(\Hils[L]\otimes\Hils[L]\otimes\Hils[L]).
\end{equation}
Here~\(\Braiding{\Hils[L]}{\Hils[L]}\) denotes the braiding operator
on the tensor product of the Yetter--Drinfeld Hilbert
space~\(\Hils[L]\) with itself,
see~\cite{Meyer-Roy-Woronowicz:Twisted_tensor_2}.

Since \(A\) and~\(\hat{A}\) are represented faithfully on~\(\Hils\),
the unitaries \(\corep{U}\) and~\(\corep{V}\) are determined by
their images \(\Corep{U}\) and~\(\Corep{V}\)
in~\(\U(\Hils[L]\otimes\Hils)\).  It is convenient to
replace~\(\Corep{V}\) by \(\DuCorep{V} \defeq \Sigma \Corep{V}^*
\Sigma\in\U(\Hils\otimes\Hils[L])\).  We also write~\(\Multunit\)
instead of~\(\ProjBichar\); the multiplicative unitary for the
semidirect product quantum group will be denoted by~\(\Multunit[C]\).

Thus a braided multiplicative unitary is a family of four unitaries
\(\Multunit\in\U(\Hils\otimes\Hils)\),
\(\Corep{U}\in\U(\Hils[L]\otimes\Hils)\),
\(\DuCorep{V}\in\U(\Hils\otimes\Hils[L])\), and
\(\BrMultunit\in\U(\Hils[L]\otimes\Hils[L])\) for two Hilbert spaces
\(\Hils\) and~\(\Hils[L]\); these unitaries satisfy seven conditions:
the pentagon condition for~\(\Multunit\); one corepresentation
condition each for \(\Corep{U}\) and~\(\DuCorep{V}\), which link them
to~\(\Multunit\); the Yetter--Drinfeld condition linking
\(\Corep{U}\) and~\(\DuCorep{V}\); the equivariance of~\(\BrMultunit\)
with respect to \(\Corep{U}\tenscorep\Corep{U}\)
and~\(\DuCorep{V}\tenscorep\DuCorep{V}\); and the braided pentagon
equation for~\(\BrMultunit\).  We show that given these four unitaries
subject to these seven conditions, the unitary
\begin{equation}
  \label{eq:crossed_product_multunit}
  \Multunit[C]_{1234}\defeq
  \Multunit_{13}\Corep{U}_{23}\DuCorep{V}^*_{34}
  \BrMultunit_{24}\DuCorep{V}_{34}
  \qquad\text{in } \U(\Hils\otimes\Hils[L]\otimes\Hils\otimes\Hils[L])
\end{equation}
is multiplicative.  Furthermore, the unitaries \(\Multunit[C]\)
and \(\ProjBichar \defeq \Multunit_{13} \Corep{U}_{23}\)
on \(\Hils\otimes\Hils[L] \otimes\Hils\otimes\Hils[L]\)
satisfy the conditions
\eqref{eq:Multunit_pentagon}--\eqref{eq:PR_pentagon} that characterise
\(\Cst\)\nb-quantum
groups with projection.  The only analytic issue is to prove
that~\(\Multunit[C]\)
is manageable if the braided multiplicative unitary is manageable in a
suitable sense.  Otherwise, the claim is proved by a direct
computation.  This has to be lengthy, however, because all seven
conditions on our four unitaries must play their role.

Conversely, let \(\Multunit[C],\ProjBichar\in\U(\Hils\otimes\Hils)\)
be unitaries satisfying the conditions
\eqref{eq:Multunit_pentagon}--\eqref{eq:PR_pentagon},
with~\(\Multunit[C]\)
manageable.  Then we construct a braided multiplicative unitary based
on the unitary \(\Multunit=\ProjBichar\in\U(\Hils\otimes\Hils)\),
that is, we construct a Hilbert space~\(\Hils[L]\)
and unitaries \(\Corep{U}\in\U(\Hils[L]\otimes\Hils)\),
\(\DuCorep{V}\in\U(\Hils\otimes\Hils[L])\),
and \(\BrMultunit\in\U(\Hils[L]\otimes\Hils[L])\)
satisfying the conditions for a braided multiplicative unitary, and we
check that this braided multiplicative unitary is manageable.  When we
construct a pair \((\Multunit[C],\ProjBichar)\)
out of this data as in~\eqref{eq:crossed_product_multunit}, then we do
not get back the same data we started with because the underlying
Hilbert spaces have changed.  We show, however, that the resulting
\(\Cst\)\nb-quantum
groups with projection are the same.  This isomorphism is also
implemented by a quantum group isomorphism in the category constructed
in~\cite{Meyer-Roy-Woronowicz:Homomorphisms}.

When we start with a manageable braided multiplicative unitary, form
the crossed product as in~\eqref{eq:crossed_product_multunit} and go
back, we also get a different braided multiplicative unitary, which
should be ``equivalent'' to the one we started with.  Since we do not
discuss how a braided multiplicative unitary generates a braided
\(\Cst\)\nb-quantum bialgebra, we cannot yet express this equivalence.

We treat one example of a braided multiplicative unitary in detail,
namely, the one that defines the simplified quantum
\(\textup{E}(2)\) group, a variant of the quantum \(\textup{E}(2)\)
group introduced by Woronowicz
in~\cite{Woronowicz:Qnt_E2_and_Pontr_dual}.  We write down the
braided multiplicative unitary and check that it is manageable.
Similar computations appear in
\cites{Baaj:Regular-Representation-E-2,
  Woronowicz:Qnt_E2_and_Pontr_dual}.

\section{Projections on Quantum Groups}
\label{sec:proj}

A \emph{\(\Cst\)\nb-quantum group} is, by definition, a
\(\Cst\)\nb-bialgebra that is generated by a manageable multiplicative
unitary,
see~\cites{Woronowicz:Mult_unit_to_Qgrp,
  Soltan-Woronowicz:Multiplicative_unitaries}.  We do not assume a
\(\Cst\)\nb-quantum group to have Haar weights.  We fix a
\(\Cst\)\nb-quantum group \(\G[H] = (C,\Comult[C])\) and let
\(\Multunit\in\U(\Hils\otimes\Hils)\) be a manageable multiplicative
unitary on a Hilbert space~\(\Hils\) that generates it.  Let
\(\hat{\G[H]} = (\hat{C},\DuComult[C])\) be the dual quantum group.

A \emph{bialgebra morphism} \((A,\Comult[A])\to (C,\Comult[C])\)
between two \(\Cst\)\nb-bialgebras is a \(\Cst\)\nb-algebra morphism
\(f\colon A\to C\) (that is, a nondegenerate \Star{}homomorphism
\(A\to\Mult(C)\)) making the following diagram commute:
\[
\begin{tikzpicture}
    \matrix(m)[cd]{
    A&A\otimes A\\
    C&C\otimes C\\
  };
  \draw[cdar] (m-1-1) -- node {\(\Comult[A]\)} (m-1-2);
  \draw[cdar] (m-1-1) -- node[swap] {\(f\)} (m-2-1);
  \draw[cdar] (m-2-1) -- node {\(\Comult[C]\)} (m-2-2);
  \draw[cdar] (m-1-2) -- node {\(f\otimes f\)} (m-2-2);
\end{tikzpicture}
\]
This notion of morphism is too restrictive, however, because a group
homomorphism \(G\to H\)
need not induce a morphism \(\Cred(G)\to\Cred(H)\).
When we speak of morphisms of \(\Cst\)\nb-quantum
groups, we will mean those introduced by
Ng~\cite{Ng:Morph_of_Mult_unit}, and we shall use the equivalent
characterisations of these morphisms
in~\cite{Meyer-Roy-Woronowicz:Homomorphisms}.

\begin{definition}
  \label{def:quantum_group_proj}
  A \emph{\(\Cst\)\nb-quantum group with projection} is a
  \(\Cst\)\nb-quantum group with an idempotent quantum group
  endomorphism.
\end{definition}

Before we make this definition explicit, we consider the commutative
case.  It allows us to view \(\Cst\)\nb-quantum groups with projection
as \(\Cst\)\nb-quantum group analogues of semidirect products of
groups.

\begin{proposition}
  \label{pro:commutative_qg_projection}
  Let~\((C,\Comult[C])\)
  be a commutative \(\Cst\)\nb-quantum
  group with projection.  Then \(C\cong \Cont_0(G\ltimes H)\)
  for a semidirect product group, with the corresponding
  comultiplication, and the projection on~\(C\)
  comes from the group homomorphism \(G\ltimes H\to G\ltimes H\),
  \((g,h)\mapsto (g,1_H)\);
  here \(G\)
  and~\(H\)
  are locally compact groups and~\(G\)
  acts continuously on~\(H\)
  by automorphisms.  Conversely, any semidirect product group gives a
  commutative \(\Cst\)\nb-quantum group with projection in this way.
\end{proposition}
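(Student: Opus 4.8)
The plan is to reduce the whole statement to classical group theory. First recall that a commutative $\Cst$\nb-quantum group is necessarily of the form $\Cont_0(K)$ for a locally compact group~$K$, with $\Comult[C]$ dual to the group multiplication of~$K$; this is the commutative case of the general theory behind manageable multiplicative unitaries (see \cite{Soltan-Woronowicz:Multiplicative_unitaries}), commutativity together with the antipode forcing the underlying semigroup to be a group, and conversely $\Cont_0(K)$ with this comultiplication is generated by the (manageable) Kac--Takesaki multiplicative unitary $\multunit\xi(g,h)=\xi(g,g^{-1}h)$ of~$K$. So it remains to show that idempotent \(\Cst\)\nb-quantum group endomorphisms of $\Cont_0(K)$ are the same thing as semidirect product decompositions of~$K$.

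For this I would invoke the description of quantum group morphisms in \cite{Meyer-Roy-Woronowicz:Homomorphisms}: in the commutative case the morphisms $\Cont_0(K)\to\Cont_0(K)$ are precisely those induced by continuous group endomorphisms $p\colon K\to K$ (say through the Hopf \Star{}homomorphism $\lambda_k\mapsto\lambda_{p(k)}$ on the group $\Cst$\nb-algebra side), and composition of morphisms corresponds to composition of such~$p$. Hence a projection on $\Cont_0(K)$ is exactly a continuous group endomorphism $p$ with $p^2=p$. I expect this identification to carry the conceptual weight of the proof: it is what turns the a priori much larger class of Ng-type morphisms into honest group homomorphisms, and everything after it is elementary.

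Now the group-theoretic core. Given an idempotent continuous $p\colon K\to K$, put $G\defeq p(K)=\{k\in K:p(k)=k\}$ and $H\defeq\Ker(p)$. Both are closed subgroups of~$K$ ($G$ as the equaliser of $p$ and $\Id$, $H$ as a kernel) and $H$ is normal, and $G$ acts on~$H$ continuously and by automorphisms via conjugation inside~$K$. The two continuous maps
\[
K\to G\times H,\quad k\mapsto\bigl(p(k),\,p(k)^{-1}k\bigr),
\qquad\qquad
G\times H\to K,\quad (g,h)\mapsto gh,
\]
are mutually inverse, hence homeomorphisms, so $K$ is the \emph{topological} semidirect product $G\ltimes H$ for that action; under this identification $p$ becomes $(g,h)\mapsto(g,1_H)$. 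The one point genuinely requiring care (and, I would say, the only real obstacle once the reduction is in place) is precisely that the algebraic splitting of~$K$ is automatically a topological one; writing down the explicit continuous inverse $k\mapsto(p(k),p(k)^{-1}k)$ avoids having to appeal to any open mapping theorem. Translating back through the first two steps gives $C\cong\Cont_0(G\ltimes H)$ with the asserted comultiplication, and the projection on~$C$ coming from $(g,h)\mapsto(g,1_H)$; note $H=\Ker(p)$ and $G=p(K)$ are determined by~$p$, so the decomposition is canonical.

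For the converse, start from a semidirect product group $G\ltimes H$ with $G$ acting continuously by automorphisms. The map $(g,h)\mapsto(g,1_H)$ is a continuous idempotent group endomorphism of $G\ltimes H$, and by the morphism classification above it induces an idempotent quantum group endomorphism of the commutative $\Cst$\nb-quantum group $\Cont_0(G\ltimes H)$, that is, a projection in the sense of Definition~\ref{def:quantum_group_proj}. This closes the loop.
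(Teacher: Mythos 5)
Your proposal is correct and follows essentially the same route as the paper: identify $C\cong\Cont_0(K)$, use the classification of quantum group morphisms in the commutative case to reduce the projection to an idempotent continuous group endomorphism $p$, and then exhibit the explicit mutually inverse continuous maps $k\mapsto(p(k),p(k)^{-1}k)$ and $(g,h)\mapsto gh$ to obtain the topological and algebraic splitting $K\cong G\ltimes H$. The only cosmetic difference is that the paper writes the second component as $p(k^{-1})k$, which equals your $p(k)^{-1}k$ since $p$ is a homomorphism.
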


\begin{proof}
  Since~\(C\) is commutative, \(C\cong\Cont_0(K)\) for a locally
  compact group~\(K\).  A quantum group homomorphism from~\(C\) to
  itself is equivalent to a group homomorphism \(K\to K\), and the
  composition of quantum group homomorphisms also corresponds to the
  composition of group homomorphisms.  Thus a projection on~\(C\)
  corresponds to a group homomorphism \(p\colon K\to K\) with
  \(p\circ p=p\).  Let \(G\subseteq K\) and \(H\subseteq K\) be the
  image and kernel of~\(p\), respectively; these are locally compact
  groups as well.  Since~\(H\) is a normal subgroup, conjugation
  in~\(K\) lets \(G\subseteq K\) act continuously on~\(H\) by
  automorphisms.  The continuous maps \(m\colon G\times H\to K\),
  \((g,h)\mapsto g\cdot h\), and \(n\colon K\to G\times H\),
  \(k\mapsto (p(k),p(k^{-1})k)\), are inverse to each other and
  hence homeomorphisms.  The multiplication is given by
  \[
  m(g_1,h_1)\cdot m(g_2,h_2)
  = g_1h_1g_2h_2
  = g_1g_2 (g_2^{-1}h_1g_2)h_2
  = m(g_1g_2, (g_2^{-1}h_1g_2)h_2).
  \]
  Thus the homeomorphism~\(m\) is also a group isomorphism \(K\cong
  G\ltimes H\).  The converse assertion is routine to check.
\end{proof}

Now we make Definition~\ref{def:quantum_group_proj} explicit in
several different ways, corresponding to some of the equivalent
characterisations of quantum group morphisms
in~\cite{Meyer-Roy-Woronowicz:Homomorphisms}.  First we use unitaries
satisfying pentagon equations.

\begin{proposition}
  \label{pro:qg_projection_mu}
  A \(\Cst\)\nb-quantum group with projection is given by a Hilbert
  space~\(\Hils\) and two unitaries
  \(\ProjBichar,\Multunit\in\U(\Hils\otimes\Hils)\) that satisfy
  \begin{align*}
    \Multunit_{23}\Multunit_{12}
    &= \Multunit_{12}\Multunit_{13}\Multunit_{23},\\
    \ProjBichar_{23}\Multunit_{12}
    &= \Multunit_{12}\ProjBichar_{13}\ProjBichar_{23},\\
    \Multunit_{23} \ProjBichar_{12}
    &= \ProjBichar_{12}\ProjBichar_{13}\Multunit_{23},\\
    \ProjBichar_{23}\ProjBichar_{12}
    &= \ProjBichar_{12}\ProjBichar_{13}\ProjBichar_{23}
    \qquad\quad\text{in }\U(\Hils\otimes\Hils\otimes\Hils).
  \end{align*}
  In addition, \(\Multunit\) is manageable as a multiplicative
  unitary.
\end{proposition}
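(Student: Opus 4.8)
The plan is to read Proposition~\ref{pro:qg_projection_mu} as an unpacking of Definition~\ref{def:quantum_group_proj} through the dictionary between quantum group morphisms and bicharacters established in~\cite{Meyer-Roy-Woronowicz:Homomorphisms}, and to prove it in two directions. For the forward direction, start with a \(\Cst\)\nb-quantum group \(\G[H]=(C,\Comult[C])\) carrying an idempotent quantum group endomorphism~\(p\). By definition \(C\) is generated by a manageable multiplicative unitary; fix such a \(\Multunit\in\U(\Hils\otimes\Hils)\). It satisfies the pentagon equation~\eqref{eq:Multunit_pentagon} and is manageable, and \(C\) and~\(\hat{C}\) are faithfully and nondegenerately represented on~\(\Hils\) in such a way that \(\Multunit\) implements both \(\Comult[C]\) and~\(\DuComult[C]\). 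By~\cites{Ng:Morph_of_Mult_unit,Meyer-Roy-Woronowicz:Homomorphisms}, the endomorphism~\(p\) is the same datum as a bicharacter \(\projbichar\in\U\Mult(\hat{C}\otimes C)\); viewing \(\hat{C}\otimes C\subseteq\Bound(\Hils\otimes\Hils)\) through the faithful representations gives the unitary \(\ProjBichar\in\U(\Hils\otimes\Hils)\) of the proposition. Rewriting the two bicharacter conditions on~\(\projbichar\) — compatibility with \(\DuComult[C]\) on the first leg and with \(\Comult[C]\) on the second — in terms of~\(\Multunit\) turns them into exactly the second and third displayed relations, which is~\eqref{eq:intro_pentagonal}; this is precisely one of the equivalent descriptions of a morphism in~\cite{Meyer-Roy-Woronowicz:Homomorphisms}. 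Finally, feeding \(p\circ p\) into the composition law for bicharacters and using \(p\circ p=p\) produces the fourth relation, the pentagon equation~\eqref{eq:PR_pentagon} for~\(\ProjBichar\).

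For the converse, suppose \(\Multunit,\ProjBichar\in\U(\Hils\otimes\Hils)\) satisfy the four displayed relations with \(\Multunit\) manageable. The pentagon equation~\eqref{eq:Multunit_pentagon} together with manageability lets us invoke Woronowicz's theorem (\cite{Woronowicz:Mult_unit_to_Qgrp}; see also~\cite{Soltan-Woronowicz:Multiplicative_unitaries}) to obtain a \(\Cst\)\nb-quantum group \((C,\Comult[C])\) generated by~\(\Multunit\), with \(C\) and~\(\hat{C}\) faithfully represented on~\(\Hils\). A slicing argument based on the second and third relations — slicing one leg of the mixed pentagons by functionals and using density of the slices of~\(\Multunit\) — shows that the two legs of~\(\ProjBichar\) land in \(\hat{C}\) and~\(C\), so that \(\ProjBichar\) is a unitary multiplier \(\projbichar\in\U\Mult(\hat{C}\otimes C)\); the same two relations then say \(\projbichar\) is a bicharacter, hence a quantum group endomorphism~\(p\) of \((C,\Comult[C])\). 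The fourth relation, read through the composition law, gives \(p\circ p=p\). Thus \(((C,\Comult[C]),p)\) is a \(\Cst\)\nb-quantum group with projection. That the two passages are mutually inverse (up to the choice of generating multiplicative unitary) follows from the corresponding statements about morphisms and bicharacters in~\cite{Meyer-Roy-Woronowicz:Homomorphisms}.

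The only point that is not a routine unfolding of definitions is the claim that idempotency of~\(p\) under composition of morphisms corresponds to the \emph{pentagon} equation~\eqref{eq:PR_pentagon} for~\(\ProjBichar\) — so that, as announced in the introduction, an idempotent~\(\ProjBichar\) is itself a multiplicative unitary, generating the image quantum group. Establishing this needs the explicit formula for the composition of two bicharacters and a short direct computation using all four relations; one also has to take care that~\(\ProjBichar\), a priori only a unitary on \(\Hils\otimes\Hils\), genuinely lies in \(\U\Mult(\hat{C}\otimes C)\), so that the composition even makes sense — this is exactly where the second and third relations are needed. The analytic content, namely manageability, enters only in the passage between~\(\Multunit\) and the \(\Cst\)\nb-quantum group \((C,\Comult[C])\); the rest is algebra. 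I therefore expect the composition/idempotency dictionary, and not any hard analysis, to be the main thing to get right.
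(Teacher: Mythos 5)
Your proposal is correct and follows essentially the same route as the paper: the paper's proof is simply a terse appeal to \cite{Meyer-Roy-Woronowicz:Homomorphisms} (Lemma 3.2 for the equivalence between endomorphisms and unitaries on \(\Hils\otimes\Hils\) satisfying the two mixed pentagon relations, and Definition 3.5 for the fact that idempotency under composition of bicharacters is exactly the pentagon equation for~\(\ProjBichar\)). The extra details you supply — the slicing argument placing the legs of \(\ProjBichar\) in \(\Mult(\hat{C})\) and \(\Mult(C)\), and the explicit use of the composition law — are precisely the content of those cited results, so nothing is missing and nothing is genuinely different.
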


All four equations in Proposition~\ref{pro:qg_projection_mu} are variants
of the pentagon equation.

\begin{proof}
  \cite{Meyer-Roy-Woronowicz:Homomorphisms}*{Lemma 3.2} describes a
  quantum group morphism from~\(\G[H]\) to itself by a unitary
  \(\ProjBichar\in\U(\Hils\otimes\Hils)\) on the same Hilbert
  space~\(\Hils\) on which the manageable multiplicative
  unitary~\(\Multunit\) lives, subject to the two
  conditions~\eqref{eq:intro_pentagonal}, which are the second and
  third equation in our statement.  The first equation is the pentagon
  equation for~\(\Multunit\).  The fourth equation says that the
  quantum group endomorphism associated to~\(\ProjBichar\) is
  idempotent by \cite{Meyer-Roy-Woronowicz:Homomorphisms}*{Definition
    3.5}.
\end{proof}

Our first goal is to prove the following structural result:

\begin{proposition}
  \label{pro:idempotents_split}
  Any idempotent endomorphism \(p\colon \G[H]\to\G[H]\)
  of a \(\Cst\)\nb-quantum
  group~\(\G[H]\)
  splits.  That is, there are a \(\Cst\)\nb-quantum
  group~\(\G\)
  and quantum group morphisms \(a\colon \G\to\G[H]\),
  \(b\colon \G[H]\to\G\)
  with \(a\circ b=p\) and \(b\circ a=\Id_{\G}\).
\end{proposition}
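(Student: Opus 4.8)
The plan is to work entirely on the level of multiplicative unitaries, as promised in the introduction. Using Proposition~\ref{pro:qg_projection_mu}, the idempotent endomorphism~\(p\) of~\(\G[H]\) is encoded by a pair of unitaries \(\ProjBichar,\Multunit\in\U(\Hils\otimes\Hils)\) satisfying the four pentagon-type equations listed there, with \(\Multunit\) manageable. The fourth equation says that \(\ProjBichar\) is itself a multiplicative unitary; I would first check that it is manageable (this should follow from manageability of~\(\Multunit\) together with the second and third equations, which exhibit \(\ProjBichar\) as a ``sub-object'' of~\(\Multunit\) in a suitable sense, and is presumably already recorded in \cite{Meyer-Roy-Woronowicz:Homomorphisms}). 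Then \(\ProjBichar\) generates a \(\Cst\)\nb-quantum group, which I declare to be~\(\G = (A,\Comult[A])\). The morphism \(a\colon \G\to\G[H]\) I would take to be the quantum group morphism whose bicharacter is exactly~\(\ProjBichar\) viewed in \(\U\Mult(\hat A\otimes C)\); equivalently, by \cite{Meyer-Roy-Woronowicz:Homomorphisms}, \(a\) is the morphism realised by the unitary \(\ProjBichar\in\U(\Hils\otimes\Hils)\) intertwining the two multiplicative unitaries \(\ProjBichar\) and~\(\Multunit\).

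Next I would construct the morphism \(b\colon \G[H]\to\G\). The natural candidate is again to use~\(\ProjBichar\), but now in the other slot: one checks that \(\ProjBichar\), read appropriately, also defines a morphism from~\(\G[H]\) to~\(\G\). Concretely, the second and third equations of Proposition~\ref{pro:qg_projection_mu} are precisely the conditions in \cite{Meyer-Roy-Woronowicz:Homomorphisms}*{Lemma 3.2} relating~\(\ProjBichar\) to~\(\Multunit\) on the one hand and to~\(\ProjBichar\) (as multiplicative unitary) on the other, so they say that~\(\ProjBichar\) is simultaneously a right bicharacter for the pair \((\Multunit,\ProjBichar)\) and a left bicharacter for the pair \((\ProjBichar,\ProjBichar)\); these are exactly the data of the two morphisms \(a\) and~\(b\). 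The composition \(b\circ a\) then corresponds, under the composition law for morphisms described in \cite{Meyer-Roy-Woronowicz:Homomorphisms}, to a bicharacter built from~\(\ProjBichar\) alone, and the fourth (pentagon) equation for~\(\ProjBichar\) forces this composite to be the identity morphism of~\(\G\): this is the content of \cite{Meyer-Roy-Woronowicz:Homomorphisms}*{Definition 3.5} read for \(\ProjBichar\) in place of a general idempotent. Symmetrically, \(a\circ b\) corresponds to a bicharacter built from~\(\ProjBichar\) and~\(\Multunit\), and the second and third equations identify it with~\(\ProjBichar\) again, that is, with~\(p\).

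I expect the main obstacle to be bookkeeping rather than conceptual: one must match each of the four equations of Proposition~\ref{pro:qg_projection_mu} with the correct ``slot'' in the morphism calculus of \cite{Meyer-Roy-Woronowicz:Homomorphisms}, keeping careful track of which Hilbert-space leg carries~\(C\), which carries~\(\hat C\), which carries~\(A\), and which carries~\(\hat A\), and of the fact that here the four underlying \(\Cst\)\nb-algebras \(A,\hat A, C,\hat C\) are all represented on the \emph{same} space~\(\Hils\). The key points to verify with care are: (i) \(\ProjBichar\) is manageable, so that \(\G\) is a genuine \(\Cst\)\nb-quantum group; (ii) the second and third equations really do say that~\(\ProjBichar\) defines a morphism \(a\colon\G\to\G[H]\) and a morphism \(b\colon\G[H]\to\G\); and (iii) the composition-of-morphisms formula together with the pentagon equation for~\(\ProjBichar\) gives \(b\circ a=\Id_\G\) and \(a\circ b=p\). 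Steps (ii) and (iii) are direct translations of results in \cite{Meyer-Roy-Woronowicz:Homomorphisms} once the slots are correctly identified, so after the identification the proof reduces to citing those results.
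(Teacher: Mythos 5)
Your proposal is correct and follows essentially the same route as the paper: the paper also takes $\G$ to be the quantum group generated by~$\ProjBichar$ (after verifying its manageability in Proposition~\ref{pro:ProjBichar_manageable}, using the same operator~$Q$ as for~$\Multunit$ and the commutation $\ProjBichar(Q\otimes Q)\ProjBichar^*=Q\otimes Q$ from Lemma~\ref{lem:V_tilde_commute_Q}), realises both morphisms $a$ and~$b$ by the single unitary~$\ProjBichar$ read in the slots $\U(\hat{A}\otimes C)$ and $\U(\hat{C}\otimes A)$ respectively (via the embeddings $A\subseteq\Mult(C)$, $\hat{A}\subseteq\Mult(\hat{C})$ of Lemma~\ref{lem:A_into_C_Hopf}), and deduces both $a\circ b=p$ and $b\circ a=\Id_{\G}$ from the composition law of \cite{Meyer-Roy-Woronowicz:Homomorphisms} applied to the pentagon equation for~$\ProjBichar$ (Lemma~\ref{lem:projection_from_ij}). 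The only slight misattribution is in your middle paragraph, where you credit the identification $a\circ b=p$ to the second and third equations: as your own point~(iii) correctly states, both composition identities come from the fourth (pentagon) equation for~$\ProjBichar$, while the second and third equations serve to make $a$ and~$b$ well-defined bicharacters in the first place.
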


The \(\Cst\)\nb-quantum
group~\(\G\)
is called the \emph{image} of the idempotent endomorphism~\(p\).
We first construct this image, then we describe \(a\)
and~\(b\)
and then we prove \(a\circ b=p\)
and \(b\circ a=\Id_{\G}\).
The proof of Proposition~\ref{pro:idempotents_split} will be finished
by Lemma~\ref{lem:projection_from_ij}.

The fourth equation in Proposition~\ref{pro:qg_projection_mu} says
that~\(\ProjBichar\) is a multiplicative unitary.

\begin{proposition}
  \label{pro:ProjBichar_manageable}
  The multiplicative unitary \(\ProjBichar\in\U(\Hils\otimes\Hils)\)
  is manageable.
\end{proposition}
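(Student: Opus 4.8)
The plan is to produce Woronowicz's manageability data for~$\ProjBichar$ out of that for~$\Multunit$. Recall from \cite{Woronowicz:Mult_unit_to_Qgrp} that manageability of~$\Multunit$ provides a strictly positive operator~$Q$ on~$\Hils$ and a unitary $\widetilde{\Multunit}\in\U(\conj{\Hils}\otimes\Hils)$ with $\Multunit(Q\otimes Q)\Multunit[*]=Q\otimes Q$ and
\[
  \langle x\otimes u,\ \Multunit(z\otimes y)\rangle
  = \langle\conj{z}\otimes Qu,\ \widetilde{\Multunit}(\conj{x}\otimes Q^{-1}y)\rangle
\]
for all $x,z\in\Hils$, $u\in\dom Q$ and $y\in\dom Q^{-1}$. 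The dual multiplicative unitary $\DuMultunit\defeq\Flip\Multunit[*]\Flip$ is again manageable with the \emph{same}~$Q$, since $\DuMultunit(Q\otimes Q)\DuMultunit[*]=\Flip\Multunit[*](Q\otimes Q)\Multunit\Flip=Q\otimes Q$. I shall show that~$\ProjBichar$ is manageable with this same~$Q$ and a unitary~$\widetilde{\ProjBichar}$ assembled from~$\widetilde{\Multunit}$ and~$\ProjBichar$.

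First consider the relation $\ProjBichar(Q\otimes Q)\ProjBichar^{*}=Q\otimes Q$; as $Q\otimes Q$ is strictly positive, this is equivalent to~$\ProjBichar$ commuting with $Q^{\ima t}\otimes Q^{\ima t}$ for every $t\in\R$. The second and third equations of Proposition~\ref{pro:qg_projection_mu} say that $\ProjBichar\in\U\Mult(\hat{C}\otimes C)$ is a bicharacter, and a bicharacter is invariant under the diagonal action of the scaling group \cite{Meyer-Roy-Woronowicz:Homomorphisms}. Since the scaling groups of~$\G[H]$ and of~$\hat{\G[H]}$ are implemented on~$\Hils$ by $\operatorname{Ad}(Q^{2\ima t})$ — the first by manageability of~$\Multunit$, the second by manageability of~$\DuMultunit$ with the same~$Q$ — we obtain $(Q^{2\ima t}\otimes Q^{2\ima t})\ProjBichar(Q^{-2\ima t}\otimes Q^{-2\ima t})=\ProjBichar$ for all~$t$, which is the assertion. (One may also argue directly: conjugating the four equations of Proposition~\ref{pro:qg_projection_mu} by $Q^{\ima t}\otimes Q^{\ima t}\otimes Q^{\ima t}$ and using that~$\Multunit$ commutes with $Q^{\ima t}\otimes Q^{\ima t}$ shows that $\operatorname{Ad}(Q^{\ima t}\otimes Q^{\ima t})\ProjBichar$ satisfies the same system and represents the same endomorphism of~$\G[H]$, hence equals~$\ProjBichar$.)

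It remains to build a unitary $\widetilde{\ProjBichar}\in\U(\conj{\Hils}\otimes\Hils)$ with
\[
  \langle x\otimes u,\ \ProjBichar(z\otimes y)\rangle
  = \langle\conj{z}\otimes Qu,\ \widetilde{\ProjBichar}(\conj{x}\otimes Q^{-1}y)\rangle
\]
for $x,z\in\Hils$, $u\in\dom Q$, $y\in\dom Q^{-1}$. This sesquilinear form always defines a densely defined closable operator; the content — and the place where manageability of~$\Multunit$ enters essentially — is that it is unitary. The plan is to exploit that~$\ProjBichar$ is a corepresentation of the manageable $\Cst$\nb-quantum group~$\G[H]$ on~$\Hils$ (by the second equation of Proposition~\ref{pro:qg_projection_mu}), and hence admissible in the sense of \cite{Woronowicz:Mult_unit_to_Qgrp}: applying the tilde operation to the identity $\ProjBichar_{23}\Multunit_{12}=\Multunit_{12}\ProjBichar_{13}\ProjBichar_{23}$ exhibits~$\widetilde{\ProjBichar}$ as a composite of~$\widetilde{\Multunit}$ and~$\ProjBichar$, which one then checks is unitary and satisfies the displayed identity with the same~$Q$ as above (that the positive operator belonging to this admissible corepresentation is~$Q$ is forced, once more, by the bicharacter property). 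This last step is the main obstacle: all of the hypotheses — manageability of~$\Multunit$, the corepresentation relation linking~$\ProjBichar$ with~$\Multunit$, and the scaling invariance of~$\ProjBichar$ — must be brought in to pin down~$\widetilde{\ProjBichar}$ and to verify unitarity. Granting it, the two conditions established above exhibit~$\ProjBichar$ as a manageable multiplicative unitary.
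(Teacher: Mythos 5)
Your approach is the same as the paper's: keep the operator~\(Q\) coming from the manageability of~\(\Multunit\), prove that \(\ProjBichar\) commutes with \(Q\otimes Q\) via the invariance of a bicharacter under the diagonal scaling group (this is exactly the paper's Lemma~\ref{lem:V_tilde_commute_Q}), and obtain \(\widetilde{\ProjBichar}\) from the fact that \(\ProjBichar\) is adapted to the manageable unitary~\(\Multunit\). The one step you explicitly leave open --- ``granting'' the existence and unitarity of \(\widetilde{\ProjBichar}\) --- is not an obstacle that needs a hands-on construction: it is precisely the statement of \cite{Woronowicz:Mult_unit_to_Qgrp}*{Theorem 1.6}, which says that \emph{any} unitary adapted to a modular (in particular, manageable) multiplicative unitary automatically admits a unitary \(\widetilde{(\cdot)}\in\U(\conj{\Hils}\otimes\Hils)\) satisfying the required sesquilinear identity with the same~\(Q\). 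The paper's proof consists of exactly this citation plus the commutation argument, so once you replace ``granting it'' by that reference your proof is complete. One small correction: the adaptedness condition used in that theorem is the \emph{third} equation of Proposition~\ref{pro:qg_projection_mu}, namely \(\Multunit_{23}\ProjBichar_{12}=\ProjBichar_{12}\ProjBichar_{13}\Multunit_{23}\) (the right-corepresentation form in the second leg, where \(Q\) acts), not the second equation that you quote; the second equation would instead give adaptedness of the flipped unitary to the dual \(\DuMultunit\). Also, your parenthetical ``direct'' argument for the commutation with \(Q^{\ima t}\otimes Q^{\ima t}\) is shakier than the scaling-group argument --- satisfying the same system of pentagon-type equations does not by itself force equality of the two unitaries --- so you should rely on the bicharacter invariance, as the paper does.
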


\begin{proof}
  The multiplicative unitary~\(\Multunit\) is manageable by
  assumption.  This requires the existence of certain auxiliary
  operators \(Q\) and~\(\widetilde{\Multunit}\).  We use the same
  operator~\(Q\) for~\(\ProjBichar\).
  \cite{Woronowicz:Mult_unit_to_Qgrp}*{Theorem
    1.6} gives a unitary
  \(\widetilde{\ProjBichar}\in\U(\conj{\Hils}\otimes\Hils)\) with
  \[
  \bigl(x\otimes u\mid \ProjBichar\mid z\otimes y\bigr)
  = \bigl(\conj{z}\otimes Q u\mid \widetilde{\ProjBichar}\mid
  \conj{x}\otimes Q^{-1}y\bigr)
  \]
  for all \(x,z\in\Hils\), \(u\in\dom(Q)\) and \(y\in\dom(Q^{-1})\).
  Lemma~\ref{lem:V_tilde_commute_Q} shows that~\(\ProjBichar\)
  commutes with \(Q\otimes Q\).  So \(\widetilde{\ProjBichar}\)
  and~\(Q\) witness the manageability of the multiplicative
  unitary~\(\ProjBichar\) (see
  \cite{Woronowicz:Mult_unit_to_Qgrp}*{Definition 1.2}).
\end{proof}

Proposition~\ref{pro:ProjBichar_manageable} shows that~\(\ProjBichar\)
generates a \(\Cst\)\nb-quantum
group \(\G=(A,\Comult[A])\),
which is called the \emph{image of~\(\ProjBichar\)}.
Let \(\hat{\G}=(\hat{A},\DuComult[A])\) be its dual.

The unitary~\(\ProjBichar\) is the image of a unitary multiplier
\(\projbichar\in\U(\hat{C}\otimes C)\) by
\cite{Meyer-Roy-Woronowicz:Homomorphisms}*{Lemma 3.2}.  Hence slices
of~\(\ProjBichar\) are multipliers of
\(\hat{C}\subseteq\Bound(\Hils)\) and \(C\subseteq\Bound(\Hils)\),
respectively.  These slices generate \(\hat{A}\) and~\(A\),
respectively, so \(A\subseteq\Mult(C)\) and
\(\hat{A}\subseteq\Mult(\hat{C})\).

\begin{lemma}
  \label{lem:A_into_C_Hopf}
  The embeddings \(i\colon A\to\Mult(C)\) and \(j\colon
  \hat{A}\to\Mult(\hat{C})\) are bialgebra morphisms \(\G\to\G[H]\)
  and \(\hat{\G}\to\hat{\G[H]}\).
\end{lemma}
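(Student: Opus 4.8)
The plan is to unwind what it means for $i\colon A\to\Mult(C)$ and $j\colon\hat A\to\Mult(\hat C)$ to be bialgebra morphisms, and to extract everything from the four pentagon-type relations in Proposition~\ref{pro:qg_projection_mu}. Recall that $A$ is generated by slices of $\ProjBichar$ in the ``leg on $C$'' (these are multipliers of $C\subseteq\Bound(\Hils)$), while $\hat A$ is generated by slices of $\ProjBichar$ in the ``leg on $\hat C$''. Concretely, working on $\Hils$, the comultiplications are implemented by $\Multunit$: for $c\in C$ one has $\Comult[C](c)=\Multunit(c\otimes 1)\Multunit^*$, and for $\hat c\in\hat C$ one has $\DuComult[C](\hat c)=\Flip\Multunit^*(1\otimes\hat c)\Multunit\Flip$ (up to the usual flip conventions of \cite{Woronowicz:Mult_unit_to_Qgrp}); similarly $\Comult[A]$ and $\DuComult[A]$ are implemented by $\ProjBichar$ on the appropriate legs since $\ProjBichar$ is itself a manageable multiplicative unitary by Proposition~\ref{pro:ProjBichar_manageable}.

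First I would treat $i$. Since $\Comult[A]$ is implemented by conjugating with $\ProjBichar$ and $\Comult[C]$ by conjugating with $\Multunit$, compatibility $\Comult[C]\circ i=(i\otimes i)\circ\Comult[A]$ amounts to checking, for every element $a$ of $A$ sitting inside $\Mult(C)\subseteq\Bound(\Hils)$, that
\[
\Multunit_{12}(a\otimes 1)\Multunit_{12}^*=\ProjBichar_{12}(a\otimes 1)\ProjBichar_{12}^*.
\]
It suffices to verify this when $a$ is a slice $(\omega\otimes\Id)(\ProjBichar)$ of $\ProjBichar$, i.e.\ to verify the operator identity $\Multunit_{12}\ProjBichar_{13}\Multunit_{12}^*=\ProjBichar_{12}\ProjBichar_{13}\ProjBichar_{12}^*$ in $\U(\Hils\otimes\Hils\otimes\Hils)$ (slicing the first leg against $\omega$). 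But $\Multunit_{12}\ProjBichar_{13}\Multunit_{12}^*=\Multunit_{12}\ProjBichar_{13}\ProjBichar_{23}\ProjBichar_{23}^*\Multunit_{12}^*$ and the first relation $\ProjBichar_{23}\Multunit_{12}=\Multunit_{12}\ProjBichar_{13}\ProjBichar_{23}$ (the second equation in Proposition~\ref{pro:qg_projection_mu}) rewrites $\Multunit_{12}\ProjBichar_{13}\ProjBichar_{23}=\ProjBichar_{23}\Multunit_{12}$, while the fourth (pentagon) equation $\ProjBichar_{23}\ProjBichar_{12}=\ProjBichar_{12}\ProjBichar_{13}\ProjBichar_{23}$ rearranges to $\ProjBichar_{12}\ProjBichar_{13}\ProjBichar_{23}\ProjBichar_{12}^*=\ProjBichar_{23}$; combining these two gives exactly the desired identity after cancelling the $\ProjBichar_{23}^*$ and $\Multunit_{12}^*$ tails. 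One also has to observe that $i$ is genuinely a $\Cst$\nb-algebra morphism $A\to\Mult(C)$, i.e.\ nondegenerate — this is part of what the slice description already provides, together with the fact that $\ProjBichar$ is a well-behaved multiplicative unitary so its slices generate a nondegenerate subalgebra.

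Then $j$ is handled by the mirror-image computation: $\DuComult[C]$ and $\DuComult[A]$ are implemented by $\Multunit$ and $\ProjBichar$ acting on the other legs (with a flip), and the relevant operator identity becomes a consequence of the third equation $\Multunit_{23}\ProjBichar_{12}=\ProjBichar_{12}\ProjBichar_{13}\Multunit_{23}$ together with the pentagon equation for $\ProjBichar$, in a way completely symmetric to the case of $i$. I expect the main obstacle to be bookkeeping rather than conceptual: getting the leg-numbering conventions and the flip/adjoint placement in the formulas $\Comult[C](c)=\Multunit(c\otimes1)\Multunit^*$ versus $\DuComult[C](\hat c)=\Flip\,\Multunit^*(1\otimes\hat c)\Multunit\,\Flip$ exactly right, and making sure that ``it suffices to check on slices'' is justified (density of the span of slices, and continuity/nondegeneracy so that the identity on a generating set extends to all of $A$ and $\hat A$ and to their multiplier algebras). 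Once the conventions are pinned down, each of the two verifications is a three-line manipulation using precisely two of the four relations of Proposition~\ref{pro:qg_projection_mu}.
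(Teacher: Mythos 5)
Your overall strategy is the paper's: identify $A$ with the closure of the slices $(\omega\otimes\Id)\ProjBichar$, and check the comultiplication compatibility on these generators using one bicharacter relation together with the pentagon equation for~$\ProjBichar$. However, two steps as written do not go through. First, the nondegeneracy of $i$ and~$j$ (that is, $i(A)\cdot C=C$ and $j(\hat A)\cdot\hat C=\hat C$) is not "part of what the slice description already provides'': the fact that $A$ acts nondegenerately on~$\Hils$ says nothing about $A\cdot C$ being dense in~$C$. The paper proves this separately by rewriting the third relation as $\ProjBichar_{12}^*\Multunit_{23}\ProjBichar_{12}=\ProjBichar_{13}\Multunit_{23}$ and slicing the first two legs by $\omega_1\otimes\omega_2$, which yields $\cl{A\cdot C}=C$. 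This step is missing from your argument and your stated justification for skipping it is incorrect.

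Second, the key identity you display for~$i$ is garbled and the cancellation you invoke does not close. The slice leg (carrying the $\hat C$\nb-leg of~$\ProjBichar$) must be a third tensor factor disjoint from the two legs on which $\Comult[C]$ and $\Comult[A]$ act; the correct identity is $\Multunit_{23}\ProjBichar_{12}\Multunit_{23}^*=\ProjBichar_{12}\ProjBichar_{13}=\ProjBichar_{23}\ProjBichar_{12}\ProjBichar_{23}^*$, which is an immediate consequence of the \emph{third} relation $\Multunit_{23}\ProjBichar_{12}=\ProjBichar_{12}\ProjBichar_{13}\Multunit_{23}$ together with the pentagon for~$\ProjBichar$; slicing the first leg then gives $\Comult[C](a)=\Comult[A](a)$ for $a=(\omega\otimes\Id)\ProjBichar$. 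The second relation $\ProjBichar_{23}\Multunit_{12}=\Multunit_{12}\ProjBichar_{13}\ProjBichar_{23}$, which you pair with~$i$, together with the pentagon only yields $\Multunit_{12}^*\ProjBichar_{23}\Multunit_{12}=\ProjBichar_{13}\ProjBichar_{23}=\ProjBichar_{12}^*\ProjBichar_{23}\ProjBichar_{12}$ --- conjugation in the \emph{opposite} direction --- and an identity of the form $U^*TU=V^*TV$ does not formally imply $UTU^*=VTV^*$ (it only says $UV^*$ commutes with~$T$, not with $VTV^*$), so "cancelling the tails'' as you describe is not a valid step. The pair of identities you derive is in fact exactly what is needed for the dual morphism~$j$ (where $\DuComult$ is implemented by conjugation with $\Multunit^*$ on the other legs); you have swapped the roles of the two bicharacter relations between $i$ and~$j$. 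These are fixable, but they are not mere bookkeeping: each morphism requires the specific relation that expresses the corresponding half of the bicharacter condition.
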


\begin{proof}
  First we claim that \(i\) and~\(j\) are \(\Cst\)\nb-algebra morphisms,
  that is, \(i(A)\cdot C = C\) and \(j(\hat{A})\cdot \hat{C} = \hat{C}\).
  The third condition in Proposition~\ref{pro:qg_projection_mu} is
  equivalent to
  \[
  \ProjBichar_{12}^*\Multunit_{23}\ProjBichar_{12}
  =\ProjBichar_{13}\Multunit_{23}
  \qquad\text{in }\U(\Hils\otimes\Hils\otimes\Hils).
  \]
  When we slice the first two legs on both sides
  by~\(\omega_1\otimes\omega_2\) for
  \(\omega_1, \omega_2\in\Bound(\Hils)_*\)
  and close in norm, we get \(C=A\cdot C\).
  The same argument works for~\(j\).

  The conditions in Proposition~\ref{pro:qg_projection_mu} also imply
  \begin{gather*}
    \Multunit_{23} \ProjBichar_{12}\Multunit[*]_{23}
    = \ProjBichar_{12}\ProjBichar_{13}
    = \ProjBichar_{23}\ProjBichar_{12} \ProjBichar_{23}^*,\\
    \Multunit[*]_{12} \ProjBichar_{23}\Multunit_{12}
    = \ProjBichar_{13}\ProjBichar_{23}
    = \ProjBichar_{12}^* \ProjBichar_{23}\ProjBichar_{12}.
  \end{gather*}
  Since \((\Id_D\otimes \Comult[C])(x) = \Multunit_{23}
  x_{12}\Multunit[*]_{23}\) for all \(x\in D\otimes C\) and
  \((\Id_D\otimes \Comult[A])(x) = \ProjBichar_{23}
  x_{12}\ProjBichar^*_{23}\) for all \(x\in D\otimes A\), the first
  equation says that \(\Id_{\hat{A}}\otimes\Comult[C]\) and
  \(\Id_{\hat{A}}\otimes\Comult[A]\) agree on~\(\ProjBichar\).  Since
  slices of~\(\ProjBichar\) generate~\(A\), this implies
  \(\Comult[C]|_A=\Comult[A]\), that is, \(i\) is a bialgebra
  morphism.  So is~\(j\) by a similar argument.
\end{proof}

The bialgebra morphisms \(i\) and~\(j\) give quantum group morphisms
\begin{alignat*}{2}
  V_i&=(\Id\otimes i)(\multunit[A]) \in\U(\hat{A} \otimes C)&\qquad
  &\text{from }A\text{ to }C,\\
  \hat{V}_j &=(j\otimes \Id)(\multunit[A]) \in\U(\hat{C} \otimes A)&\qquad
  &\text{from }C\text{ to }A.
\end{alignat*}
The quantum groups \(\G\)
and~\(\G[H]\)
may be generated by the multiplicative unitaries \(\ProjBichar\)
and~\(\Multunit\)
on the same Hilbert space~\(\Hils\).
Then the unitaries \(V_i\)
and~\(\hat{V}_j\)
are both represented by the same unitary~\(\ProjBichar\)
on~\(\Hils\otimes\Hils\);
the conditions in Proposition~\ref{pro:qg_projection_mu} allow us to
view~\(\ProjBichar\)
as a quantum group homomorphism \(\G\to\G[H]\),
\(\G[H]\to \G\),
\(\G[H]\to\G[H]\),
or as the identity quantum group homomorphism on~\(\G\).

\begin{lemma}
  \label{lem:projection_from_ij}
  The composite quantum group homomorphism \(V_i\circ \hat{V}_j\colon
  \G[H]\to\G\to\G[H]\) is the given projection
  \(\projbichar\in\U(\hat{C}\otimes C)\) on~\(\G[H]\).  The other composite
  \(\G\to\G[H]\to\G\) is the identity on~\(\G\).
\end{lemma}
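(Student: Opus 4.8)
The plan is to work entirely on the level of multiplicative unitaries, exploiting that $\G$ and $\G[H]$ are simultaneously represented on the single Hilbert space~$\Hils$, with $\G$ generated by $\ProjBichar$ and $\G[H]$ by $\Multunit$. Recall from~\cite{Meyer-Roy-Woronowicz:Homomorphisms} that the composition of quantum group morphisms is computed at the level of bicharacters: if a morphism $\G[H]\to\G$ is encoded by a bicharacter $\hat V_j\in\U\Mult(\hat C\otimes A)$ and a morphism $\G\to\G[H]$ by $V_i\in\U\Mult(\hat A\otimes C)$, then the composite $\G[H]\to\G\to\G[H]$ is the bicharacter in $\U\Mult(\hat C\otimes C)$ obtained by the usual ``multiply along the middle leg'' recipe. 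The key observation is that under the common representation on~$\Hils$, both $V_i$ and $\hat V_j$ are represented by the \emph{same} operator~$\ProjBichar\in\U(\Hils\otimes\Hils)$: this is precisely what the discussion preceding the lemma records, since $V_i=(\Id\otimes i)(\multunit[A])$ and $\hat V_j=(j\otimes\Id)(\multunit[A])$ are both images of $\multunit[A]$, and $\multunit[A]$ is represented on~$\Hils\otimes\Hils$ by~$\ProjBichar$.

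First I would spell out the composition recipe for morphisms in terms of the representing operators on $\Hils\otimes\Hils\otimes\Hils$: the composite $\G[H]\to\G[H]$ is the unitary obtained from $\hat V_j$ in legs $(1,2)$ and $V_i$ in legs $(1,3)$ by an appropriate product, which by the bicharacter formalism of~\cite{Meyer-Roy-Woronowicz:Homomorphisms} amounts to a slicing of a product like $\ProjBichar_{12}\ProjBichar_{13}\cdots$ against the middle (the $A$-resp.-$\hat A$) leg. Since the $A$- and $\hat A$-parts coincide, the composition collapses: the fourth equation in Proposition~\ref{pro:qg_projection_mu}, namely $\ProjBichar_{23}\ProjBichar_{12}=\ProjBichar_{12}\ProjBichar_{13}\ProjBichar_{23}$, is exactly the identity that makes the composite of $\ProjBichar$ (as $\G[H]\to\G$) with $\ProjBichar$ (as $\G\to\G[H]$) equal to $\ProjBichar$ (as $\G[H]\to\G[H]$). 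So the composite morphism is represented by~$\ProjBichar$, which by construction is the operator representing the given projection~$\projbichar$; hence $V_i\circ\hat V_j=\projbichar$. For the other composite $\G\to\G[H]\to\G$, the same slicing but now regarding $\ProjBichar$ as the identity morphism on~$\G$ (using the second and third equations of Proposition~\ref{pro:qg_projection_mu}, which say $\ProjBichar$ intertwines $\Comult[A]$ and is the trivial bicharacter when both legs are the $A$-side) yields the identity bicharacter $\multunit[A]$ itself, i.e.\ $\Id_\G$.

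The main obstacle is bookkeeping: translating the abstract ``composition of bicharacters'' of~\cite{Meyer-Roy-Woronowicz:Homomorphisms} into a concrete statement about which legs of which copy of~$\ProjBichar$ get multiplied and sliced, and verifying that the slicing one must perform to extract the composite from $\U(\Hils^{\otimes 3})$ really does reduce, via the pentagon-type equations of Proposition~\ref{pro:qg_projection_mu}, to~$\ProjBichar$ on the nose rather than merely up to some coherence isomorphism. Once the correct form of the composition formula is identified, each of the two claims is a one-line consequence of one of the displayed equations: the fourth equation for $V_i\circ\hat V_j=\projbichar$, and the second/third (together with the pentagon for~$\ProjBichar$ guaranteeing $\ProjBichar$ represents $\Id_\G$ faithfully on the $A$-side) for $\hat V_j\circ V_i=\Id_\G$. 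Combined with Lemma~\ref{lem:A_into_C_Hopf}, which supplies the morphisms $a=V_i$ and $b=\hat V_j$, this completes the proof of Proposition~\ref{pro:idempotents_split}.
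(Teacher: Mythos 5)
Your proposal is correct and follows essentially the same route as the paper: both $V_i$ and $\hat{V}_j$ are represented by the single unitary $\ProjBichar$, composition of quantum group morphisms is characterised by a pentagon-like equation, and that equation here is exactly the pentagon equation for $\ProjBichar$ (the fourth identity in Proposition~\ref{pro:qg_projection_mu}), which yields both composites at once. The only quibbles are cosmetic: in the composition recipe the second bicharacter sits in legs $(2,3)$ with the composite extracted from legs $(1,3)$ (not $V_i$ in legs $(1,3)$ as you wrote), and the second claim is likewise an instance of the pentagon for $\ProjBichar$ rather than of the second and third equations.
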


\begin{proof}
  The composition of quantum group homomorphisms is described
  in~\cite{Meyer-Roy-Woronowicz:Homomorphisms} by a
  pentagon-like equation.  The two claims in the lemma are both
  equivalent to the pentagon equation for~\(\ProjBichar\).
\end{proof}

The description of a projection on a \(\Cst\)\nb-quantum group by a pair of bialgebra morphisms \((i,j)\) is unwieldy because it mixes quantum groups and their duals and because the composition \(\G\to\G[H]\to\G\) is computed only indirectly.

The quantum group morphism \(\G[H]\to\G\) is usually not representable by a bialgebra morphism \(C\to A\).  We may, however, also represent the quantum group morphism~\(j\) by a
bialgebra morphism \(\hat{j}^\univ\colon C^\univ\to A^\univ\)
between the universal quantum groups, see
\cite{Meyer-Roy-Woronowicz:Homomorphisms}*{Theorem 4.8}.  Similarly,
\(i\) lifts to a bialgebra morphism \(i^\univ\colon A^\univ\to
C^\univ\).  A \(\Cst\)\nb-quantum group with projection is
equivalent to a \(\Cst\)\nb-quantum group~\(\G[H]\) with a bialgebra
morphism \(p\colon C^\univ\to C^\univ\) satisfying \(p\circ p=p\) by
\cite{Meyer-Roy-Woronowicz:Homomorphisms}*{Theorem 4.8}.  Our
analysis above shows that for any such~\(p\) there are a \(\Cst\)\nb-quantum group~\((A,\Comult[A])\) and bialgebra morphisms
\(\hat{j}^\univ\colon C^\univ\to A^\univ\) and \(i^\univ\colon A^\univ\to
C^\univ\) with \(p=i^\univ\circ \hat{j}^\univ\) and \(\hat{j}^\univ \circ i^\univ = \Id_A\).  Thus a quantum group with projection is equivalent to two \(\Cst\)\nb-quantum groups with bialgebra morphisms
\(\hat{j}^\univ\colon C^\univ\to A^\univ\) and \(i^\univ\colon A^\univ\to
C^\univ\) with \(\hat{j}^\univ \circ i^\univ = \Id_A\).

Next we replace~\(\hat{j}\) by right and left quantum group morphisms:

\begin{proposition}
  \label{pro:projection_via_action}
  A \(\Cst\)\nb-quantum group with projection is equivalent to two
  \(\Cst\)\nb-\alb{}quantum groups \(\G[H]=(C,\Comult[C])\)
  and \(\G=(A,\Comult[A])\) with morphisms \(i\colon A\to C\) and
  \(\Delta_R\colon C\to C\otimes A\) such that the following
  diagrams commute:
  \[
  \begin{tikzpicture}
    \matrix(m)[cd]{
      A&A\otimes A\\
      C&C\otimes C\\
    };
    \draw[cdar] (m-1-1) -- node {\(\Comult[A]\)} (m-1-2);
    \draw[cdar] (m-1-1) -- node[swap] {\(i\)} (m-2-1);
    \draw[cdar] (m-2-1) -- node {\(\Comult[C]\)} (m-2-2);
    \draw[cdar] (m-1-2) -- node {\(i\otimes i\)} (m-2-2);
  \end{tikzpicture}\qquad
  \begin{tikzpicture}
    \matrix(m)[cd,column sep=4.5em]{
      C&C\otimes A\\
      C\otimes C&C\otimes C\otimes A\\
    };
    \draw[cdar] (m-1-1) -- node {\(\Delta_R\)} (m-1-2);
    \draw[cdar] (m-1-1) -- node[swap] {\(\Comult[C]\)} (m-2-1);
    \draw[cdar] (m-2-1) -- node {\(\Id_C\otimes\Delta_R\)} (m-2-2);
    \draw[cdar] (m-1-2) -- node {\(\Comult[C]\otimes\Id_A\)} (m-2-2);
  \end{tikzpicture}
  \]
  \[
  \begin{tikzpicture}
    \matrix(m)[cd,column sep=4.5em]{
      C&C\otimes A\\
      C\otimes A&C\otimes A\otimes A\\
    };
    \draw[cdar] (m-1-1) -- node {\(\Delta_R\)} (m-1-2);
    \draw[cdar] (m-1-1) -- node[swap] {\(\Delta_R\)} (m-2-1);
    \draw[cdar] (m-2-1) -- node {\(\Delta_R\otimes\Id_A\)} (m-2-2);
    \draw[cdar] (m-1-2) -- node {\(\Id_C\otimes\Comult[A]\)} (m-2-2);
  \end{tikzpicture}\qquad
  \begin{tikzpicture}
    \matrix(m)[cd]{
      A&A\otimes A\\
      C&C\otimes A\\
    };
    \draw[cdar] (m-1-1) -- node {\(\Comult[A]\)} (m-1-2);
    \draw[cdar] (m-1-1) -- node[swap] {\(i\)} (m-2-1);
    \draw[cdar] (m-2-1) -- node {\(\Delta_R\)} (m-2-2);
    \draw[cdar] (m-1-2) -- node {\(i\otimes\Id_A\)} (m-2-2);
  \end{tikzpicture}
  \]
  Another equivalent set of data is a pair of morphisms \(i\colon
  A\to C\) and \(\Delta_L\colon C\to A\otimes C\) with commutative diagrams
  \[
  \begin{tikzpicture}
    \matrix(m)[cd]{
      A&A\otimes A\\
      C&C\otimes C\\
    };
    \draw[cdar] (m-1-1) -- node {\(\Comult[A]\)} (m-1-2);
    \draw[cdar] (m-1-1) -- node[swap] {\(i\)} (m-2-1);
    \draw[cdar] (m-2-1) -- node {\(\Comult[C]\)} (m-2-2);
    \draw[cdar] (m-1-2) -- node {\(i\otimes i\)} (m-2-2);
  \end{tikzpicture}\qquad
  \begin{tikzpicture}
    \matrix(m)[cd,column sep=4.5em]{
      C&A\otimes C\\
      C\otimes C&A\otimes C\otimes C\\
    };
    \draw[cdar] (m-1-1) -- node {\(\Delta_L\)} (m-1-2);
    \draw[cdar] (m-1-1) -- node[swap] {\(\Comult[C]\)} (m-2-1);
    \draw[cdar] (m-2-1) -- node {\(\Delta_L\otimes\Id_C\)} (m-2-2);
    \draw[cdar] (m-1-2) -- node {\(\Id_A\otimes\Comult[C]\)} (m-2-2);
  \end{tikzpicture}
  \]
  \[
  \begin{tikzpicture}
    \matrix(m)[cd,column sep=4.5em]{
      C&A\otimes C\\
      A\otimes C&A\otimes A\otimes C\\
    };
    \draw[cdar] (m-1-1) -- node {\(\Delta_L\)} (m-1-2);
    \draw[cdar] (m-1-1) -- node[swap] {\(\Delta_L\)} (m-2-1);
    \draw[cdar] (m-2-1) -- node {\(\Id_A\otimes\Delta_L\)} (m-2-2);
    \draw[cdar] (m-1-2) -- node {\(\Comult[A]\otimes\Id_C\)} (m-2-2);
  \end{tikzpicture}\qquad
  \begin{tikzpicture}
    \matrix(m)[cd]{
      A&A\otimes A\\
      C&A\otimes C\\
    };
    \draw[cdar] (m-1-1) -- node {\(\Comult[A]\)} (m-1-2);
    \draw[cdar] (m-1-1) -- node[swap] {\(i\)} (m-2-1);
    \draw[cdar] (m-2-1) -- node {\(\Delta_L\)} (m-2-2);
    \draw[cdar] (m-1-2) -- node {\(\Id_A\otimes i\)} (m-2-2);
  \end{tikzpicture}
  \]
  Finally, the quantum group with projection is equivalent to a
  triple of morphisms \(i\colon A\to C\), \(\Delta_R\colon C\to
  C\otimes A\) and \(\Delta_L\colon C\to A\otimes C\) satisfying all
  the above conditions and, in addition,
  \[
  \begin{tikzpicture}
    \matrix(m)[cd,column sep=4.5em]{
      C&C\otimes C\\
      C\otimes C&C\otimes A\otimes C\\
    };
    \draw[cdar] (m-1-1) -- node {\(\Comult[C]\)} (m-1-2);
    \draw[cdar] (m-1-1) -- node[swap] {\(\Comult[C]\)} (m-2-1);
    \draw[cdar] (m-2-1) -- node {\(\Delta_R\otimes \Id_C\)} (m-2-2);
    \draw[cdar] (m-1-2) -- node {\(\Id_C\otimes \Delta_L\)} (m-2-2);
  \end{tikzpicture}
  \]
  Then the following diagram also commutes:
  \[
  \begin{tikzpicture}
    \matrix(m)[cd,column sep=4.5em]{
      C&A\otimes C\\
      C\otimes A&A\otimes C\otimes A\\
    };
    \draw[cdar] (m-1-1) -- node {\(\Delta_L\)} (m-1-2);
    \draw[cdar] (m-1-1) -- node[swap] {\(\Delta_R\)} (m-2-1);
    \draw[cdar] (m-2-1) -- node {\(\Delta_L\otimes \Id_A\)} (m-2-2);
    \draw[cdar] (m-1-2) -- node {\(\Id_A\otimes \Delta_R\)} (m-2-2);
  \end{tikzpicture}
  \]
\end{proposition}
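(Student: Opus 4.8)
The plan is to read off all the listed descriptions from the data already produced in Lemmas~\ref{lem:A_into_C_Hopf} and~\ref{lem:projection_from_ij}, by passing through the various equivalent pictures of a quantum group morphism in~\cite{Meyer-Roy-Woronowicz:Homomorphisms}. Those lemmas give the image \(\G=(A,\Comult[A])\), the inclusion \(i\colon A\to\Mult(C)\) as a bialgebra morphism \(\G\to\G[H]\) (hence a quantum group morphism \(a\colon\G\to\G[H]\)), and a quantum group morphism \(b\colon\G[H]\to\G\) with \(b\circ a=\Id_\G\) and \(a\circ b=p\); conversely, any \(\Cst\)\nb-quantum group~\(\G\) together with such morphisms \(a\) (represented by a bialgebra morphism \(i\colon A\to C\)) and \(b\) with \(b\circ a=\Id_\G\) yields the projection \(p\defeq a\circ b\), since \(p\circ p=a\circ(b\circ a)\circ b=p\), and the two constructions are mutually inverse. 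So I only have to re-encode the single morphism~\(b\) and the equation \(b\circ a=\Id_\G\), keeping track that each re-encoding is bijective.

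For the first two one-sided descriptions I would invoke~\cite{Meyer-Roy-Woronowicz:Homomorphisms}: the morphism \(b\colon\G[H]\to\G\) is the same as a right quantum group homomorphism, that is, a morphism \(\Delta_R\colon C\to C\otimes A\) that is a continuous coaction of~\(\G\) on~\(C\) (the third diagram) and intertwines~\(\Comult[C]\) (the second diagram)---concretely, \(\Delta_R\) is the restriction of the canonical \(\G[H]\)-coaction~\(\Comult[C]\) along~\(b\)---and it is equally the same as a left quantum group homomorphism \(\Delta_L\colon C\to A\otimes C\) satisfying the mirror-image diagrams; the Podleś density conditions are automatic because morphisms of \(\Cst\)\nb-algebras are nondegenerate. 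The first diagram in each list is exactly Lemma~\ref{lem:A_into_C_Hopf}. The fourth diagram is a restatement of \(b\circ a=\Id_\G\): using the description of composition in~\cite{Meyer-Roy-Woronowicz:Homomorphisms} together with the fact that \(a\) is represented by the bialgebra morphism~\(i\), one has \(\Delta_R\circ i=(i\otimes\Id_A)\circ\gamma\), where \(\gamma\) is the right quantum group homomorphism encoding \(b\circ a\); so \(b\circ a=\Id_\G\), i.e.\ \(\gamma=\Comult[A]\), reads \(\Delta_R\circ i=(i\otimes\Id_A)\circ\Comult[A]\) (resp.\ \(\Delta_L\circ i=(\Id_A\otimes i)\circ\Comult[A]\) for~\(\Delta_L\)), and injectivity of the inclusion~\(i\) makes this an equivalence. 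This establishes the two one-sided equivalences.

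For the triple I keep both \(\Delta_R\) and \(\Delta_L\) attached to the same~\(b\), so I must (i) check that the displayed compatibility \((\Delta_R\otimes\Id_C)\circ\Comult[C]=(\Id_C\otimes\Delta_L)\circ\Comult[C]\) is then automatic, and (ii) show conversely that this compatibility forces a pair consisting of a right and a left quantum group homomorphism to represent one and the same morphism, so that \(p\defeq a\circ b\) is unambiguous; the last displayed diagram, relating \(\Delta_L\) and~\(\Delta_R\), should follow as well. I would verify (i), (ii) and the last diagram by working with multiplicative unitaries as in Proposition~\ref{pro:qg_projection_mu}: realising \(\G[H]\) and~\(\G\) on one Hilbert space~\(\Hils\), the maps \(\Comult[C]\), \(\Delta_R\), \(\Delta_L\) are all implemented by conjugation with suitable legs of \(\Multunit\) and~\(\ProjBichar\), so that the diagrams in question become identities between products of \(\Multunit\) and~\(\ProjBichar\) in \(\U(\Hils\otimes\Hils\otimes\Hils)\); these follow from the four pentagon-type equations of Proposition~\ref{pro:qg_projection_mu} and the relations derived from them in the proof of Lemma~\ref{lem:A_into_C_Hopf}. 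Since the compatibility and the last diagram are consequences of, but impose no extra restriction on, the data of the first description, the triple description is again equivalent to a \(\Cst\)\nb-quantum group with projection.

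The routine part is the translation between the pictures of~\cite{Meyer-Roy-Woronowicz:Homomorphisms}; the main obstacle I expect is keeping the leg numbering and the left/right and quantum-group/dual conventions consistent across the bicharacter, right-homomorphism and left-homomorphism pictures, and then pushing through the direct computation for the two extra diagrams of the triple, where all of the pentagon relations between \(\Multunit\) and~\(\ProjBichar\) have to be brought into play.
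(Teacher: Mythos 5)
Your proposal is correct and follows essentially the same route as the paper: both reduce everything to the splitting $p=i\circ\hat{j}$, $\hat{j}\circ i=\Id_{\G}$ from Proposition~\ref{pro:idempotents_split} and then translate the single morphism $\hat{j}$ into a right (resp.\ left) quantum group homomorphism via~\cite{Meyer-Roy-Woronowicz:Homomorphisms}, with the fourth diagram encoding $\hat{j}\circ i=\Id_{\G}$. The only difference is that for the two diagrams relating $\Delta_R$ and~$\Delta_L$ you propose a direct multiplicative-unitary computation, whereas the paper simply cites \cite{Meyer-Roy-Woronowicz:Homomorphisms}*{Lemma 5.7}, which already states that the first of these diagrams characterises when $\Delta_R$ and~$\Delta_L$ represent the same morphism and that the second is then automatic.
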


\begin{proof}
  We have already seen that any projection on a \(\Cst\)\nb-quantum
  group~\(\G[H]\) has an image~\(\G\) and that there are a bialgebra
  morphism \(i\colon A\to C\) and a quantum group morphism
  \(\hat{j}\colon \G[H]\to\G\) with \(\hat{j}\circ i=\Id_{\G}\) and
  \(i\circ\hat{j} = p\), where~\(p\) denotes the given projection
  on~\(\G[H]\).  Now we describe~\(\hat{j}\) by a right quantum
  group morphism~\(\Delta_R\) as in
  \cite{Meyer-Roy-Woronowicz:Homomorphisms}*{Definition 5.1}.

  The first diagram above says that~\(i\)
  is a bialgebra morphism.  The second and third diagram together say
  that~\(\Delta_R\)
  is a right quantum group homomorphism from~\(C\)
  to~\(A\).
  The fourth diagram says that the composite \(A\to C\to A\)
  of these quantum group morphisms is the identity map.  Therefore,
  the other composite \(C\to A\to C\)
  is idempotent, hence a projection.  Thus \(i\)
  and~\(\Delta_R\)
  give a projection on~\(\G[H]\)
  with image~\(\G\).
  Conversely, any projection on a \(\Cst\)\nb-quantum
  group~\(\G[H]\)
  has an image by Proposition~\ref{pro:idempotents_split}, which
  gives~\(i\) and~\(\Delta_R\) as above.

  Replacing right by left quantum group morphisms shows that
  pairs~\((i,\Delta_L)\)
  as above are also equivalent to \(\Cst\)\nb-quantum
  groups with projection.  Of the two diagrams that relate
  \(\Delta_R\)
  and~\(\Delta_L\),
  the first one characterises when the right and left quantum group
  homomorphisms \(\Delta_R\)
  and~\(\Delta_L\)
  describe the same quantum group morphism, and the second one
  commutes automatically, see
  \cite{Meyer-Roy-Woronowicz:Homomorphisms}*{Lemma 5.7}.
\end{proof}

Let \(A\) and~\(B\) be~\(\Cst\)\nb-algebras and \(T\in\U(A\otimes
B)\).  Then~\(B\) is \emph{generated by~\(T\)} in the sense of
\cite{Woronowicz:Cstar_generated}*{Definition 4.1} if, for any
representation \(\xi\colon B\to\Bound(\Hils)\) and any
\(\Cst\)\nb-algebra \(C\subset\Bound(\Hils)\), the condition
\((\Id_A\otimes\xi)T\in\Mult(A\otimes C)\) implies that
\(\xi\in\Mor(B,C)\).

\begin{definition}[\cite{Daws-Kasprzak-Skalski-Soltan:Closed_qnt_subgrps}*{Definition
    3.2}]
  \label{def:closed_qnt_sb_grp}
  Let \(\Qgrp{I}{C}\) and \(\Qgrp{G}{A}\) be quantum groups.  We
  call~\(\G\) a \emph{closed quantum subgroup of\/~\(\G[I]\) in the
    sense of Woronowicz} if there is a bicharacter
  \(\bichar\in\U(\hat{C}\otimes A)\) that generates~\(\G\).
\end{definition}

In the situation of Proposition~\ref{pro:projection_via_action},
\((A,\Comult[A])\)
is indeed a closed quantum subgroup of~\((C,\Comult[C])\)
because the bicharacter
\((j\otimes\Id_A)(\multunit[A])\in \U(\hat{C}\otimes A)\)
generates~\(A\).
This is to be expected because \((A,\Comult[A])\)
is even a retract of~\((C,\Comult[C])\)
in the category of quantum group morphisms.

\subsection{Semidirect products}
\label{sec:semidirect}

In this section, we are going to show that the semidirect product
construction in \cite{Meyer-Roy-Woronowicz:Twisted_tensor_2}*{Section
  6} gives examples of \(\Cst\)\nb-quantum
groups with projection.  Since we do not use this construction in the
rest of the article, we do not recall the notation and setup
from~\cite{Meyer-Roy-Woronowicz:Twisted_tensor_2}.  Readers unfamiliar
with the semidirect product construction
in~\cite{Meyer-Roy-Woronowicz:Twisted_tensor_2} may skip this
section.

Let \(\G=(A,\Comult[A])\)
be a \(\Cst\)\nb-quantum
group.  Let~\((B,\beta,\hat\beta)\)
be an \(A\)\nb-Yetter--Drinfeld
algebra, that is, \(\beta\colon B\to B\otimes A\)
and \(\hat\beta\colon B\to B\otimes \hat{A}\)
are continuous coactions of \(A\)
and~\(\hat{A}\)
that satisfy the compatibility condition in
\cite{Meyer-Roy-Woronowicz:Twisted_tensor_2}*{Definition 5.11}.  The
twisted tensor product \(B\boxtimes B = B\boxtimes_{\multunit} B\)
is defined in \cite{Meyer-Roy-Woronowicz:Twisted_tensor_2}.  We also
require a coassociative comultiplication
\(\Comult[B]\colon B\to B\boxtimes B\).
Then \cite{Meyer-Roy-Woronowicz:Twisted_tensor_2}*{Theorem 6.8}
describes a coassociative comultiplication~\(\Comult[C]\)
on \(C\defeq A\boxtimes B\)
and shows that the \(\Cst\)\nb-bialgebra
\(\G[H]= (C,\Comult[C])\)
is bisimplifiable if~\((B,\Comult[B])\)
is bisimplifiable.  Furthermore, \(\Comult[C]\)
is injective if and only if~\(\Comult[B]\)
is injective.  It is not studied
in~\cite{Meyer-Roy-Woronowicz:Twisted_tensor_2} when
\((C,\Comult[C])\)
is a \(\Cst\)\nb-quantum
group: by our definition, this would require a multiplicative unitary
that generates it.  If~\(C\)
is unital, then this automatically exists and we are dealing with a
compact quantum group.  In the non-compact case, we need some sort of
multiplicative unitary for~\(B\) to get one for~\(C\).

For now, we disregard this issue.  We want to describe a projection
on~\(\G[H]\)
with image~\(\G\),
and the description of projections in
Proposition~\ref{pro:projection_via_action} makes sense in our
situation.  Thus we are going to define morphisms
\[
i\colon A\to C,\qquad
\Delta_R\colon C\to C\otimes A,\qquad
\Delta_L\colon C\to A\otimes C
\]
with the properties listed in
Proposition~\ref{pro:projection_via_action}.  If we know for some
reason that~\(\G[H]\)
is a \(\Cst\)\nb-quantum
group, that is, comes from a manageable multiplicative unitary, then
\((i,\Delta_L,\Delta_R)\)
as in Proposition~\ref{pro:projection_via_action} give a projection
on~\(\G[H]\)
with image~\(\G\).
Actually, we only need either \(\Delta_L\)
or~\(\Delta_R\)
for this purpose.  We provide both, however, and check all conditions in
Proposition~\ref{pro:projection_via_action}.

The morphism \(i\colon A\to A\boxtimes B = C\) is the canonical
embedding from the twisted tensor product, which is denoted~\(j_1\)
or~\(\iota_A\) in~\cite{Meyer-Roy-Woronowicz:Twisted_tensor_2}.  The
right coaction \(\Delta_R\colon C\to C\otimes A\) is the one
constructed in \cite{Meyer-Roy-Woronowicz:Twisted_tensor_2}*{Lemma
  6.5}.  It is the unique one for which the embeddings
\(i=\iota_A\colon A\to C\) and \(\iota_B\colon B\to C\) are
equivariant; that is,
\[
\Delta_R(\iota_A(a)\cdot \iota_B(b))
= (\iota_A\otimes\Id_A)(\Comult[A](a))
\cdot (\iota_B\otimes\Id_A)(\beta(b)).
\]
To construct~\(\Delta_L\), we equip \(A\otimes A\) with the right
\(A\)\nb-coaction \(\Id_A\otimes\Comult[A]\) on the second tensor
factor; this is a continuous \(A\)\nb-coaction, and
\(\Comult[A]\colon A\to A\otimes A\) is an \(A\)\nb-equivariant
morphism.  Therefore, there is an \(A\)\nb-equivariant morphism
\(\Comult[A]\boxtimes \Id_B\colon A\boxtimes B \to (A\otimes
A)\boxtimes B\).  We let~\(\Delta_L\) be the composite of
\(\Comult[A]\boxtimes \Id_B\) with the isomorphism \((A\otimes
A)\boxtimes B \cong A\otimes (A\boxtimes B) = A\otimes C\) from
\cite{Meyer-Roy-Woronowicz:Twisted_tensor_2}*{Lemma 3.14}.
We may also rewrite
\[
A\boxtimes_{\multunit} B \cong B\boxtimes_{\Dumultunit} A
\]
by \cite{Meyer-Roy-Woronowicz:Twisted_tensor_2}*{Proposition 5.1}.
This is exactly the reduced crossed product for the
\(\hat{A}\)\nb-coaction
on~\(B\)
by \cite{Meyer-Roy-Woronowicz:Twisted_tensor_2}*{Section 6.3}.  After
this identification, \(\Delta_L\)
becomes the dual coaction on the reduced crossed product as described
in \cite{Meyer-Roy-Woronowicz:Twisted_tensor_2}*{Section 6.3}.

\begin{proposition}
  \label{pro:projection_on_semidirect_product}
  The morphisms \(i\), \(\Delta_R\) and~\(\Delta_L\) constructed
  above make all the diagrams in
  Proposition~\textup{\ref{pro:projection_via_action}} commute.
\end{proposition}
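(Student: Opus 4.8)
The plan is to verify every commuting square of Proposition~\ref{pro:projection_via_action} by evaluating on generators. The algebra $C = A\boxtimes B$ is generated by the images of the two canonical embeddings $i = \iota_A\colon A\to C$ and $\iota_B\colon B\to C$, and all maps occurring in those squares — $\Comult[A]$, $\Comult[C]$, $i$, $\Delta_R$, $\Delta_L$ and their tensor products and composites — are $\Cst$\nb-algebra morphisms. Hence two morphisms out of~$C$ that appear as the two legs of a square coincide as soon as they agree on $\iota_A(A)$ and $\iota_B(B)$, so the proposition reduces to a finite list of identities in elements obtained from $a\in A$ and $b\in B$. This reasoning stays at the level of $\Cst$\nb-bialgebras, so the deliberately postponed question of whether~$\G[H]$ comes from a manageable multiplicative unitary does not enter.

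First I would collect the values on generators of the maps $\Comult[C]$, $i$, $\Delta_R$ and~$\Delta_L$. By the construction of~$\Comult[C]$ in \cite{Meyer-Roy-Woronowicz:Twisted_tensor_2}*{Theorem 6.8}, the embedding~$\iota_A$ is a morphism of $\Cst$\nb-bialgebras, that is, $\Comult[C]\circ\iota_A = (\iota_A\otimes\iota_A)\circ\Comult[A]$ — this is already the first square — and $\Comult[C](\iota_B(b))$ is given there in terms of $\Comult[B]\colon B\to B\boxtimes B$ and the braiding built from the coactions $\beta$, $\hat\beta$. The defining property of~$\Delta_R$ in \cite{Meyer-Roy-Woronowicz:Twisted_tensor_2}*{Lemma 6.5} yields $\Delta_R(\iota_A(a)) = (\iota_A\otimes\Id_A)(\Comult[A](a))$ and $\Delta_R(\iota_B(b)) = (\iota_B\otimes\Id_A)(\beta(b))$, and $\Delta_R$ is a right $A$\nb-coaction on~$C$. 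Unwinding the isomorphism $(A\otimes A)\boxtimes B\cong A\otimes(A\boxtimes B)$ of \cite{Meyer-Roy-Woronowicz:Twisted_tensor_2}*{Lemma 3.14}, through which $\Delta_L$ is defined, gives $\Delta_L(\iota_A(a)) = (\Id_A\otimes\iota_A)(\Comult[A](a))$, while $\Delta_L(\iota_B(b))$ is read off from the identification of~$\Delta_L$ with the dual coaction on the reduced crossed product $B\boxtimes_{\Dumultunit}A$ in \cite{Meyer-Roy-Woronowicz:Twisted_tensor_2}*{Section 6.3} and involves only the $\hat{A}$\nb-coaction~$\hat\beta$ on~$B$.

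With these formulas in place, most of the squares reduce to a short computation. Every square in which only~$\iota_A$ appears collapses to coassociativity of~$\Comult[A]$; this handles the $\iota_A$\nb-part of all the squares at once, and the square saying that~$i$ is a bialgebra morphism outright. The two squares expressing that~$i$ intertwines $\Comult[A]$ with $\Delta_R$, respectively with $\Delta_L$, are the defining equivariance formulas recalled above. That $\Delta_R$ and~$\Delta_L$ are coactions — the coassociativity-type squares for $\Delta_R$ alone and for $\Delta_L$ alone — follows on~$\iota_B(b)$ from the coaction axioms for $\beta$ and~$\hat\beta$. That $\Delta_R$ and~$\Delta_L$ are compatible with~$\Comult[C]$ is checked on~$\iota_B(b)$ by expanding $\Comult[C](\iota_B(b))$ and using that $\Comult[B]$ is a coaction into $B\boxtimes B$ together with the coaction axioms for $\beta$ and~$\hat\beta$; these manipulations are of the same kind as the coassociativity proof for~$\Comult[C]$ in \cite{Meyer-Roy-Woronowicz:Twisted_tensor_2}.

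The step I expect to require real effort is the extra compatibility square $(\Delta_R\otimes\Id_C)\circ\Comult[C] = (\Id_C\otimes\Delta_L)\circ\Comult[C]$. On~$\iota_A(a)$ both sides equal $(\iota_A\otimes\Id_A\otimes\iota_A)$ applied to the twice-iterated comultiplication of~$a$, so coassociativity of~$\Comult[A]$ disposes of that case. On~$\iota_B(b)$ I would expand $\Comult[C](\iota_B(b))$ via \cite{Meyer-Roy-Woronowicz:Twisted_tensor_2}*{Theorem 6.8}, apply $\Delta_R\otimes\Id_C$ to the left legs and $\Id_C\otimes\Delta_L$ to the right legs, and match the two expressions. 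The braiding in the formula for~$\Comult[C]$ is then coacted on by~$\beta$ from one side and by~$\hat\beta$ from the other, and reconciling the two sides amounts to transporting a coaction through the braiding — which is precisely what the Yetter--Drinfeld compatibility condition \cite{Meyer-Roy-Woronowicz:Twisted_tensor_2}*{Definition 5.11} for $(B,\beta,\hat\beta)$ delivers. This is the main obstacle, and I expect a computation comparable in length to the coassociativity of the twisted tensor product comultiplication. Once all the squares in the hypothesis commute, the concluding square relating $\Delta_L$ and~$\Delta_R$ requires no separate argument: by \cite{Meyer-Roy-Woronowicz:Homomorphisms}*{Lemma 5.7} it commutes automatically as soon as $\Delta_R$ and~$\Delta_L$ describe the same quantum group morphism, which is exactly the content of the extra compatibility square.
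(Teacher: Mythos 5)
Your proposal follows essentially the same route as the paper: reduce every square to a check on the generators \(\iota_A(A)\) and \(\iota_B(B)\), read off the explicit formulas for \(\Comult[C]\), \(\Delta_R\), \(\Delta_L\) on these generators, and dispose of each diagram by the coaction axioms and the equivariance of \(\Comult[B]\). The one discrepancy is that you single out the square \((\Delta_R\otimes\Id_C)\Comult[C]=(\Id_C\otimes\Delta_L)\Comult[C]\) as the hard step requiring the Yetter--Drinfeld condition, whereas in the paper it is no harder than the others: after cancelling \(\Comult[B]\) one only needs \((\Id_C\otimes\Delta_L)\Psi_{23}=(\Delta_R\otimes\Id_C)\Psi_{23}\) on the two generating copies of \(B\), where the explicit formulas \(\Psi_{23}j_1(b)=(\iota_B\otimes\iota_A)\beta(b)\) and \(\Psi_{23}j_2(b)=1\otimes\iota_B(b)\) reduce everything to the coaction identity for \(\beta\) and a triviality, with no braiding manipulation needed.
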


\begin{proof}
  Of the ten diagrams in
  Proposition~\ref{pro:projection_via_action}, the last one commutes
  automatically if the others do, and the first and fifth one
  are the same.  So we have to check eight commuting diagrams.  The
  maps \(\Comult[C]\), \(\Delta_R\) and~\(\Delta_L\) are defined to
  have certain composites with \(\iota_A\) and~\(\iota_B\):
  \begin{alignat*}{2}
    \Comult[C]\circ\iota_A &= (\iota_A\otimes\iota_A)\Comult[A],&\qquad
    \Comult[C]\circ\iota_B &= \Psi_{23}\circ\Comult[B],\\
    \Delta_R  \circ\iota_A &= (\iota_A\otimes\Id_A)  \Comult[A],&\qquad
    \Delta_R  \circ\iota_B &= (\iota_B\otimes\Id_A)  \beta,\\
    \Delta_L  \circ\iota_A &= (\Id_A  \otimes\iota_A)\Comult[A],&\quad
    \Delta_L  \circ\iota_B &= 1_A \otimes\iota_B,
  \end{alignat*}
  where \(\Psi_{23}\colon B\boxtimes B\to C\otimes C\) is the
  restriction of the map~\(\Psi\) in
  \cite{Meyer-Roy-Woronowicz:Twisted_tensor_2}*{Proposition 6.6} to
  the second two legs; that is, \(\Psi_{23}j_1(b) =
  (\iota_B\otimes\iota_A)\beta(b)\) and \(\Psi_{23}j_2(b) =
  (1\otimes\iota_B)(b)\) for all \(b\in B\).

  In particular, \((\iota_A\otimes\iota_A)\circ \Comult[A] =
  \Comult[C]\circ\iota_A\) says that the first and fifth diagram
  commute, \(\Delta_R \circ\iota_A = (\iota_A\otimes\Id_A)
  \Comult[A]\) says that the fourth diagram commutes, and \(\Delta_L
  \circ\iota_A = (\Id_A \otimes\iota_A)\Comult[A]\) says that the
  eighth diagram commutes.

  The remaining diagrams in
  Proposition~\ref{pro:projection_via_action} involve equalities of
  two maps defined on~\(C\).  Two maps \(f,f'\) defined on~\(C\) are
  equal if and only if \(f\circ\iota_A = f'\circ\iota_A\) and
  \(f\circ\iota_B = f'\circ\iota_B\).  For all remaining diagrams, it is
  trivial to check that they commute after composing
  with~\(\iota_A\) because of the explicit formulas above.  The
  third and seventh diagram do not involve~\(\Comult[C]\), so the
  composites with~\(\iota_B\) are also given explicitly, which makes
  them trivial to check; in fact, they say simply that \(\Delta_R\)
  and~\(\Delta_L\) are a right and a left coaction, respectively, which
  is already checked in~\cite{Meyer-Roy-Woronowicz:Twisted_tensor_2}.

  The condition on~\(B\) for the sixth diagram is also trivial
  because~\(\Delta_L\) only does something complicated
  on~\(\iota_A(A)\) and \(\Comult[C]\) maps~\(\iota_B(B)\) into
  \(\iota_B(B)\otimes C\).

  For the second diagram, we must check
  \((\Comult[C]\otimes\Id_A)\Delta_R\iota_B =
  (\Id_C\otimes\Delta_R)\Comult[C]\iota_B\).  Since~\(\Comult[B]\)
  is \(A\)\nb-equivariant, \((\Comult[B]\otimes \Id_A)\circ \beta =
  (\beta\bowtie\beta)\circ\Comult[B]\).  Using the definition
  of~\(\Comult[C]\), we may rewrite our goal as
  \((\Psi_{23}\otimes\Id_A)(\beta\bowtie\beta)\Comult[B] =
  (\Id_C\otimes\Delta_R)\Psi_{23}\Comult[B]\).  From this, we may
  cancel the factor~\(\Comult[B]\), so it suffices to check that
  \[
  (\Psi_{23}\otimes\Id_A)(\beta\bowtie\beta)
  = (\Id_C\otimes\Delta_R)\Psi_{23}.
  \]
  This is an equality of maps \(B\boxtimes B\to C\otimes C\otimes A\),
  which we may check on both legs separately.  On the first leg, this
  reduces to the condition
  \((\Id_B\otimes\Comult[A])\beta= (\beta\otimes\Id_A)\beta\)
  that says that~\(\beta\)
  is a coaction, and on the second leg this is trivial.  This finishes
  the proof that the second diagram commutes

  In the condition from the ninth diagram on~\(B\), we may cancel
  the factor~\(\Comult[B]\) from~\(\Comult[C]\), so it suffices to
  check that \((\Id_C\otimes\Delta_L)\Psi_{23} =
  (\Delta_R\otimes\Id_C)\Psi_{23}\) as maps \(B\boxtimes B\to
  C\otimes A\otimes C\).  This is once again checked separately on
  the two factors~\(B\).  So we must check that the maps
  \(\Id_C\otimes\Delta_L\) and \(\Delta_R\otimes\Id_C\) take the
  same values both on \((\iota_B\otimes\iota_A)\beta(b)\) and on
  \(1\otimes\iota_B(b)\) for all \(b\in B\).  This reduces to the
  coaction condition for~\(\beta\) on
  \((\iota_B\otimes\iota_A)\beta(b)\) and is trivial on
  \(1\otimes\iota_B(b)\).
\end{proof}

\section{Braided Multiplicative Unitaries}
\label{sec:braided_mu}

The definition of a braided multiplicative unitary is as complicated
as the definition of a braided \(\Cst\)\nb-quantum
group.  Recall that the latter is relative to a \(\Cst\)\nb-quantum
group \(\G=(A,\Comult[A])\)
which generates the braiding.  The underlying
\(\Cst\)\nb-algebra~\(B\)
of a braided \(\Cst\)\nb-quantum
group carries continuous coactions \(\beta\)
and~\(\hat{\beta}\)
of \(\G\)
and~\(\DuG\),
respectively, which satisfy the Yetter--Drinfeld compatibility
condition which characterises coactions of the quantum codouble
of~\(\G\).
Finally, there is the comultiplication
\(\Comult[B]\colon B\to B\boxtimes B\),
which is equivariant with respect to \(\beta\)
and~\(\hat{\beta}\)
and coassociative.  Thus a braided \(\Cst\)\nb-quantum
group contains four coactions or comultiplications \(\Comult[A]\),
\(\beta\),
\(\hat{\beta}\),
\(\Comult[B]\), which must satisfy seven algebraic conditions:
\begin{enumerate}
\item \(\Comult[A]\) is coassociative;
\item \(\beta\) is a coaction of \((A,\Comult[A])\);
\item \(\hat{\beta}\) is a coaction of \((\hat{A},\DuComult[A])\);
\item \(\beta\) and~\(\hat{\beta}\) satisfy the Drinfeld commutation
  relation, so that they give a coaction of the quantum codouble;
\item \(\Comult[B]\) is equivariant with respect to the
  coaction~\(\beta\);
\item \(\Comult[B]\) is equivariant with respect to the
  coaction~\(\hat{\beta}\);
\item \(\Comult[B]\) is coassociative.
\end{enumerate}
The tensor product~\(\boxtimes\) is not symmetric unless~\(\G\) is
trivial.  Thus \(X \boxtimes' Y \defeq Y\boxtimes X\) gives another
equally reasonable tensor product.  We may also consider braided
quantum groups where the comultiplication takes values in
\(B\boxtimes' B\) instead of \(B\boxtimes B\).  Actually, these
\(\Cst\)\nb-algebras are canonically isomorphic through the flip map,
which interchanges the two factors~\(B\).  Thus there are two kinds of
braided \(\Cst\)-quantum group, and taking the ``coopposite,'' that
is, composing~\(\Comult[B]\) with the flip map~\(\Sigma\) and leaving
everything else the same, gives a bijection between the two types.

\begin{remark}
  \label{rem:quasitriangular_simplify}
  The definition above simplifies somewhat if~\(\G\) is
  quasitriangular.  Then a corepresentation~\(\beta\) determines a
  corepresentation~\(\hat\beta\) so as to form a coaction of the
  quantum codouble.  Since~\(\hat\beta\) is a coaction constructed
  naturally from~\(\beta\), the conditions (3), (4) and~(6) above are
  redundant.  A similar simplification occurs for braided
  multiplicative unitaries.  Since we are concerned with the general
  theory here, we do not explore this situation any further.
\end{remark}

When we turn to multiplicative unitaries, we replace
\(\Cst\)\nb-algebras
by Hilbert spaces on which they act faithfully; comultiplications and
coactions are replaced by unitaries on appropriate tensor product
Hilbert spaces that implement the coactions through conjugation.  So
to specify a braided multiplicative unitary, we need two Hilbert
spaces and four unitaries that satisfy seven conditions, which
correspond to the seven conditions for the comultiplications and
coactions listed above.  Moreover, there are two slightly different
kinds of braided multiplicative unitaries, depending on whether we use
the ``standard'' braiding or its opposite; which braiding
is standard and which is opposite is, of course, a mere convention.
The following
definition contains the details:

\begin{definition}
  \label{def:braided_multiplicative_unitary}
  Let \(\Hils\) and~\(\Hils[L]\) be Hilbert spaces and let
  \(\Multunit \in \U(\Hils\otimes\Hils)\) be a manageable
  \emph{multiplicative unitary}; in particular, \(\Multunit\)
  satisfies the \emph{pentagon equation}
  \begin{equation}
    \label{eq:pentagon}
    \Multunit_{23}\Multunit_{12}
    = \Multunit_{12}\Multunit_{13}\Multunit_{23}.
  \end{equation}
  A \emph{top-braided multiplicative unitary on~\(\Hils[L]\) relative
    to~\(\Multunit\)} is given by unitaries
  \[
  \Corep{U}\in\U(\Hils[L]\otimes\Hils),\qquad
  \DuCorep{V}\in\U(\Hils\otimes\Hils[L]),\qquad
  \BrMultunit\in\U(\Hils[L]\otimes\Hils[L])
  \]
  which satisfy the following conditions:
  \begin{itemize}
  \item \(\Corep{U}\) is a \emph{right corepresentation}
    of~\(\Multunit\):
    \begin{equation}
      \label{eq:U_corep}
      \Multunit_{23} \Corep{U}_{12}
      = \Corep{U}_{12} \Corep{U}_{13} \Multunit_{23}
      \quad\text{in }\U(\Hils[L]\otimes\Hils\otimes\Hils);
    \end{equation}
  \item \(\DuCorep{V}\) is a \emph{left corepresentation}
    of~\(\Multunit\):
    \begin{equation}
      \label{eq:V_corep}
      \DuCorep{V}_{23} \Multunit_{12}
      = \Multunit_{12} \DuCorep{V}_{13} \DuCorep{V}_{23}
      \quad\text{in }\U(\Hils\otimes\Hils\otimes\Hils[L]);
    \end{equation}
  \item the corepresentations \(\Corep{U}\) and~\(\DuCorep{V}\) are
    \emph{Drinfeld compatible}:
    \begin{equation}
      \label{eq:U_V_compatible}
      \Corep{U}_{23} \Multunit_{13} \DuCorep{V}_{12}
      = \DuCorep{V}_{12} \Multunit_{13} \Corep{U}_{23}
      \quad\text{in }\U(\Hils\otimes\Hils[L]\otimes\Hils);
    \end{equation}
  \item \(\BrMultunit\) is \emph{invariant} with respect to the
    right corepresentation \(\Corep{U} \tenscorep \Corep{U} \defeq
    \Corep{U}_{13}\Corep{U}_{23}\) of~\(\Multunit\)
    on~\(\Hils[L]\otimes\Hils[L]\):
    \begin{equation}
      \label{eq:F_U-invariant}
      \Corep{U}_{13} \Corep{U}_{23} \BrMultunit_{12}
      = \BrMultunit_{12} \Corep{U}_{13} \Corep{U}_{23}
      \quad\text{in }\U(\Hils[L]\otimes\Hils[L]\otimes \Hils);
    \end{equation}
  \item \(\BrMultunit\) is \emph{invariant} with respect to the left
    corepresentation \(\DuCorep{V} \tenscorep \DuCorep{V} \defeq
    \DuCorep{V}_{13}\DuCorep{V}_{12}\) of~\(\Multunit\)
    on~\(\Hils[L]\otimes\Hils[L]\):
    \begin{equation}
      \label{eq:F_V-invariant}
      \DuCorep{V}_{13} \DuCorep{V}_{12} \BrMultunit_{23}
      = \BrMultunit_{23} \DuCorep{V}_{13} \DuCorep{V}_{12}
      \quad\text{in }\U(\Hils\otimes\Hils[L]\otimes\Hils[L]);
    \end{equation}
  \item \(\BrMultunit\) satisfies the \emph{top-braided pentagon
      equation}
    \begin{equation}
      \label{eq:top-braided_pentagon}
      \BrMultunit_{23} \BrMultunit_{12}
      = \BrMultunit_{12} (\Braiding{\Hils[L]}{\Hils[L]})_{23}
      \BrMultunit_{12} (\Dualbraiding{\Hils[L]}{\Hils[L]})_{23}
      \BrMultunit_{23}
      \quad\text{in }\U(\Hils[L]\otimes\Hils[L]\otimes\Hils[L]);
    \end{equation}
    here the braiding \(\Braiding{\Hils[L]}{\Hils[L]} \in
    \U(\Hils[L]\otimes \Hils[L])\) and
    \(\Dualbraiding{\Hils[L]}{\Hils[L]} =
    (\Braiding{\Hils[L]}{\Hils[L]})^*\) are defined as
    \(\Braiding{\Hils[L]}{\Hils[L]} = Z \Flip\) for the
    flip~\(\Flip\), \(x\otimes y\mapsto y\otimes x\), and the unique
    unitary \(Z \in \U(\Hils[L]\otimes \Hils[L])\) that satisfies
    \begin{equation}
      \label{eq:braiding}
      Z_{13} = \DuCorep{V}_{23} \Corep{U}_{12}^*
      \DuCorep{V}_{23}^* \Corep{U}_{12}
      \quad\text{in }\U(\Hils[L]\otimes\Hils\otimes\Hils[L]).
    \end{equation}
  \end{itemize}
  A \emph{bottom-braided multiplicative unitary on~\(\Hils[L]\)
    relative to~\(\Multunit\)} is given by the same unitaries
  \(\Corep{U}\), \(\DuCorep{V}\), \(\BrMultunit\) satisfying
  \eqref{eq:U_corep}--\eqref{eq:F_V-invariant} and the
  \emph{bottom-braided pentagon equation}
  \begin{equation}
    \label{eq:bottom-braided_pentagon}
    \BrMultunit_{23} \BrMultunit_{12}
    = \BrMultunit_{12} (\Dualbraiding{\Hils[L]}{\Hils[L]})_{23}
    \BrMultunit_{12} (\Braiding{\Hils[L]}{\Hils[L]})_{23}
    \BrMultunit_{23}
    \quad\text{in }\U(\Hils[L]\otimes\Hils[L]\otimes\Hils[L]).
  \end{equation}
\end{definition}

Two corepresentations \(\Corep{U}\) and~\(\DuCorep{V}\) on a Hilbert
space~\(\Hils[L]\) satisfying~\eqref{eq:U_V_compatible} are equivalent
to a corepresentation of the quantum codouble of the quantum group
associated to~\(\Multunit\).  It is shown
in~\cite{Meyer-Roy-Woronowicz:Twisted_tensor_2} that these
corepresentations form a braided monoidal category.  Our conventions
differ from those in~\cite{Meyer-Roy-Woronowicz:Twisted_tensor_2}
because we use a left corepresentation~\(\DuCorep{V}\) instead of the
corresponding right corepresentation \(\Corep{V} \defeq \Sigma
\DuCorep{V}^* \Sigma\).  The compatibility
condition~\eqref{eq:U_V_compatible} and the definition of the braiding
operator above are equivalent to those
in~\cite{Meyer-Roy-Woronowicz:Twisted_tensor_2} up to this change of
notation.  The operator~\(Z\) in~\eqref{eq:braiding} exists
because~\(\Multunit\) is manageable.  It is shown
in~\cite{Meyer-Roy-Woronowicz:Twisted_tensor_2} that the
operators~\(\Braiding{\Hils[L]_1}{\Hils[L]_2}\) defined as above form
a braiding on the tensor category of triples
\((\Hils[L],\Corep{U},\DuCorep{V})\); the
operators~\(\Dualbraiding{\Hils[L]_1}{\Hils[L]_2}\) give the opposite
braiding.

In a braided monoidal category, the leg numbering notation should use
the braiding operators.  This explains why we
replace~\(\BrMultunit_{13}\) by \((\Braiding{\Hils[L]}{\Hils[L]})_{23}
\BrMultunit_{12} (\Dualbraiding{\Hils[L]}{\Hils[L]})_{23}\) or
\((\Dualbraiding{\Hils[L]}{\Hils[L]})_{23} \BrMultunit_{12}
(\Braiding{\Hils[L]}{\Hils[L]})_{23}\) in the two braided pentagon
equations \eqref{eq:top-braided_pentagon}
and~\eqref{eq:bottom-braided_pentagon}.  We should also have
replaced~\(\BrMultunit_{23}\) by
\(\Braiding{\Hils[L]}{\Hils[L]\otimes \Hils[L]} \BrMultunit_{12}
\Dualbraiding{\Hils[L]}{\Hils[L] \otimes \Hils[L]}\); the braiding
operator~\(\Braiding{\Hils[L]}{\Hils[L]\otimes \Hils[L]}\) is defined
as \(Z' \Sigma^{\Hils[L],\Hils[L]\otimes\Hils[L]}\), where~\(Z'\) is
the unique operator on \((\Hils[L]\otimes\Hils[L])\otimes\Hils[L]\)
with
\[
Z'_{134} = (\DuCorep{V} \tenscorep \DuCorep{V})_{234} \Corep{U}_{12}^*
(\DuCorep{V} \tenscorep \DuCorep{V})_{234} ^* \Corep{U}_{12}
\quad\text{in }
\U(\Hils[L]\otimes\Hils\otimes\Hils[L]\otimes\Hils[L]).
\]
Since we are dealing with a braided monoidal category, we also have
\[
\Braiding{\Hils[L]}{\Hils[L]\otimes \Hils[L]}
= \Braiding{\Hils[L]}{\Hils[L]}_{23}
\Braiding{\Hils[L]}{\Hils[L]}_{12},\qquad
\Braiding{\Hils[L]\otimes \Hils[L]}{\Hils[L]}
= \Braiding{\Hils[L]}{\Hils[L]}_{12}
\Braiding{\Hils[L]}{\Hils[L]}_{23}.
\]

Since~\(\BrMultunit\) is invariant with respect to both
corepresentations, it commutes with any operator that is constructed
in a natural way out of them, such as~\(Z'\).  This implies
\[
\BrMultunit_{23}
= \Braiding{\Hils[L]}{\Hils[L]\otimes \Hils[L]}
\BrMultunit_{12} \Dualbraiding{\Hils[L]}{\Hils[L] \otimes \Hils[L]}
= \Dualbraiding{\Hils[L]}{\Hils[L]\otimes \Hils[L]}
\BrMultunit_{12} \Braiding{\Hils[L]}{\Hils[L] \otimes \Hils[L]},
\]
so here the braiding has no effect.  This also implies
\[
\Braiding{\Hils[L]}{\Hils[L]}_{23} \BrMultunit_{12}
\Dualbraiding{\Hils[L]}{\Hils[L]}_{23} =
\Dualbraiding{\Hils[L]}{\Hils[L]}_{12} \BrMultunit_{23}
\Braiding{\Hils[L]}{\Hils[L]}_{12},\qquad
\Dualbraiding{\Hils[L]}{\Hils[L]}_{23} \BrMultunit_{12}
\Braiding{\Hils[L]}{\Hils[L]}_{23} =
\Braiding{\Hils[L]}{\Hils[L]}_{12} \BrMultunit_{23}
\Dualbraiding{\Hils[L]}{\Hils[L]}_{12}.
\]
Such equations are easier to digest as pictures:
\[
\begin{tikzpicture}[baseline=(current bounding box.west),scale=.4]
  \draw (0,4)--(0,3.1);
  \draw (1,4) to[out=315, in=90] (2,2.5) to[out=270,, in=45] (1,1);
  \draw[overar] (2,4)--(1,3.1);
  \draw[overar] (1,1.9)--(2,1);
  \draw (0,1.9)--(0,1);
  \draw (-.1,3.1)--(1.1,3.1)--(1.1,1.9)--(-.1,1.9)--(-.1,3.1);
  \draw (0.5,2.4) node {$\BrMultunit$};
  \draw (1.5,0) node {$\Braiding{\Hils[L]}{\Hils[L]}_{23} \BrMultunit_{12} \Dualbraiding{\Hils[L]}{\Hils[L]}_{23}$};
\end{tikzpicture}
=
\begin{tikzpicture}[baseline=(current bounding box.west),scale=.4]
  \draw (1,4) to[out=225, in=90] (0,2.5) to[out=270,, in=135] (1,1);
  \draw (2,4)--(2,3.1);
  \draw[overar] (1,1.9)--(0,1);
  \draw[overar] (0,4)--(1,3.1);
  \draw (2,1.9)--(2,1);
  \draw (.9,3.1)--(2.1,3.1)--(2.1,1.9)--(.9,1.9)--(.9,3.1);
  \draw (1.5,2.4) node {$\BrMultunit$};
  \draw (1.5,0) node {$\Dualbraiding{\Hils[L]}{\Hils[L]}_{12} \BrMultunit_{23} \Braiding{\Hils[L]}{\Hils[L]}_{12}$};
\end{tikzpicture}
\qquad
\begin{tikzpicture}[baseline=(current bounding box.west),scale=.4]
  \draw (0,4)--(0,3.1);
  \draw (2,4)--(1,3.1);
  \draw (1,1.9)--(2,1);
  \draw (0,1.9)--(0,1);
  \draw[overar] (1,4) to[out=315, in=90] (2,2.5) to[out=270,, in=45] (1,1);
  \draw (-.1,3.1)--(1.1,3.1)--(1.1,1.9)--(-.1,1.9)--(-.1,3.1);
  \draw (0.5,2.4) node {$\BrMultunit$};
  \draw (1.5,0) node {$\Dualbraiding{\Hils[L]}{\Hils[L]}_{23} \BrMultunit_{12} \Braiding{\Hils[L]}{\Hils[L]}_{23}$};
\end{tikzpicture}
=
\begin{tikzpicture}[baseline=(current bounding box.west),scale=.4]
  \draw (2,4)--(2,3.1);
  \draw (1,1.9)--(0,1);
  \draw (0,4)--(1,3.1);
  \draw (2,1.9)--(2,1);
  \draw[overar] (1,4) to[out=225, in=90] (0,2.5) to[out=270,, in=135] (1,1);
  \draw (.9,3.1)--(2.1,3.1)--(2.1,1.9)--(.9,1.9)--(.9,3.1);
  \draw (1.5,2.4) node {$\BrMultunit$};
  \draw (1.5,0) node {$\Braiding{\Hils[L]}{\Hils[L]}_{12} \BrMultunit_{23} \Dualbraiding{\Hils[L]}{\Hils[L]}_{12}$};
\end{tikzpicture}
\]
The top-braided pentagon equation~\eqref{eq:top-braided_pentagon}
uses the version of~\(\BrMultunit_{13}\) where~\(\BrMultunit\) acts
on the two top strands, whereas the bottom-braided pentagon
equation~\eqref{eq:bottom-braided_pentagon} uses the version
of~\(\BrMultunit_{13}\) where~\(\BrMultunit\) acts on the two bottom
strands; this explains our notation.

The braided pentagon equation is the usual pentagon equation if and
only if~\(\BrMultunit\) commutes with \(\Sigma Z\Sigma\).  Sufficient
conditions for this are \(Z=1\), \(\Corep{U}=1\) or \(\DuCorep{V}=1\).

From now on, we restrict attention to top-braided multiplicative
unitaries, so \emph{braided multiplicative unitary} means
\emph{top-braided multiplicative unitary}.

\begin{definition}
  \label{def:dual_brmult}
  The \emph{dual} of a braided multiplicative unitary
  \((\Corep{U},\DuCorep{V},\BrMultunit)\) over~\(\Multunit\) is
  \((\Corep{V},\DuCorep{U},\DuBrMultunit)\) over~\(\DuMultunit\),
  where \(\DuMultunit\defeq \Sigma \Multunit[*] \Sigma\), \(\Corep{V}
  \defeq \Sigma \DuCorep{V}^* \Sigma\), \(\DuCorep{U} \defeq \Sigma
  \Corep{U}^* \Sigma\), and
  \[
  \DuBrMultunit\defeq \Dualbraiding{\Hils[L]}{\Hils[L]}
  \BrMultunit^*\Braiding{\Hils[L]}{\Hils[L]} \in
  \U(\Hils[L]\otimes\Hils[L]).
  \]
\end{definition}

The braiding operator~\(\Braiding{\Hils[L]}{\Hils[L]}\) for
\((\DuMultunit,\Corep{V},\DuCorep{U})\) is the opposite
braiding~\(\Dualbraiding{\Hils[L]}{\Hils[L]}\) for
\((\Multunit,\Corep{U},\DuCorep{V})\).  Therefore, the dual of the
dual is the braided multiplicative unitary that we started with,
even if the braiding is not symmetric.

\begin{proposition}
  \label{pro:dual_brmult}
  Let \((\Corep{U},\DuCorep{V},\BrMultunit)\) be a top-braided
  multiplicative unitary over~\(\Multunit\).  Its dual
  \((\Corep{V},\DuCorep{U},\DuBrMultunit)\) is a top-braided
  multiplicative unitary over \(\DuMultunit \defeq \Sigma
  \Multunit[*] \Sigma\).
\end{proposition}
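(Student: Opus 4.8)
The plan is to verify, one at a time, the seven conditions of
Definition~\ref{def:braided_multiplicative_unitary} for the dual data
$(\Corep{V},\DuCorep{U},\DuBrMultunit)$ over~$\DuMultunit$, each deduced
from the corresponding condition for $(\Corep{U},\DuCorep{V},\BrMultunit)$
over~$\Multunit$.  The first four conditions are the \emph{classical}
ones.  The dual $\DuMultunit=\Flip\Multunit[*]\Flip$ of a manageable
multiplicative unitary is again a manageable multiplicative unitary,
see~\cite{Woronowicz:Mult_unit_to_Qgrp}; in particular it satisfies the
pentagon equation~\eqref{eq:pentagon}.  Writing the analogues of
\eqref{eq:U_corep}, \eqref{eq:V_corep} and~\eqref{eq:U_V_compatible}
over~$\DuMultunit$ and substituting $\Corep{V}=\Flip\DuCorep{V}^*\Flip$,
$\DuCorep{U}=\Flip\Corep{U}^*\Flip$ and $\DuMultunit=\Flip\Multunit[*]\Flip$,
one checks --- after taking adjoints and conjugating with flips on the
appropriate legs --- that these turn into exactly \eqref{eq:U_corep},
\eqref{eq:V_corep} and~\eqref{eq:U_V_compatible} for $(\Corep{U},\DuCorep{V})$
over~$\Multunit$; this is the usual duality interchanging left and right
corepresentations, under which the Drinfeld compatibility
\eqref{eq:U_V_compatible} is self-conjugate.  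Conceptually,
$(\Corep{U},\DuCorep{V})$ and $(\Corep{V},\DuCorep{U})$ are the same
corepresentation of the quantum codouble, viewed over~$\G$ and
over~$\DuG$ respectively.

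Next come the two invariance conditions~\eqref{eq:F_U-invariant}
and~\eqref{eq:F_V-invariant} for~$\DuBrMultunit$ with respect to
$\Corep{V}\tenscorep\Corep{V}$ and $\DuCorep{U}\tenscorep\DuCorep{U}$.
Conjugating \eqref{eq:F_V-invariant} and \eqref{eq:F_U-invariant}
for~$\BrMultunit$ by the suitable leg flips and taking adjoints shows
that $\BrMultunit^*_{12}$ commutes with $\Corep{V}_{13}\Corep{V}_{23}$
and that $\BrMultunit^*_{23}$ commutes with $\DuCorep{U}_{13}\DuCorep{U}_{12}$.
Since the braiding operators $\Braiding{\Hils[L]}{\Hils[L]}$
and~$\Dualbraiding{\Hils[L]}{\Hils[L]}$ are morphisms in the category of
codouble corepresentations (\cite{Meyer-Roy-Woronowicz:Twisted_tensor_2}),
they commute with these diagonal corepresentations as well; hence so does
$\DuBrMultunit=\Dualbraiding{\Hils[L]}{\Hils[L]}\BrMultunit^*\Braiding{\Hils[L]}{\Hils[L]}$,
which is precisely \eqref{eq:F_U-invariant} and \eqref{eq:F_V-invariant}
for the dual.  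Here the relevant braiding is~$\Dualbraiding{\Hils[L]}{\Hils[L]}$,
since, as noted before the proposition, the braiding of
$(\DuMultunit,\Corep{V},\DuCorep{U})$ is the opposite of the braiding of
$(\Multunit,\Corep{U},\DuCorep{V})$.

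The one real computation is the braided pentagon equation.  Because the
braiding of the dual is~$\Dualbraiding{\Hils[L]}{\Hils[L]}$, the
top-braided pentagon equation~\eqref{eq:top-braided_pentagon} for the
dual reads
\[
\DuBrMultunit_{23}\DuBrMultunit_{12}
= \DuBrMultunit_{12}(\Dualbraiding{\Hils[L]}{\Hils[L]})_{23}
\DuBrMultunit_{12}(\Braiding{\Hils[L]}{\Hils[L]})_{23}\DuBrMultunit_{23},
\]
which is the \emph{bottom}-braided pentagon
equation~\eqref{eq:bottom-braided_pentagon} with~$\BrMultunit$ replaced
by~$\DuBrMultunit$.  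I would substitute
$\DuBrMultunit_{ij}=(\Dualbraiding{\Hils[L]}{\Hils[L]})_{ij}
\BrMultunit^*_{ij}(\Braiding{\Hils[L]}{\Hils[L]})_{ij}$ on both sides and
reduce the identity to the adjoint of~\eqref{eq:top-braided_pentagon}
for~$\BrMultunit$, using the identities collected after~\eqref{eq:braiding}:
the relations that slide~$\BrMultunit$ past a braiding operator on an
adjacent pair of legs, the Yang--Baxter relation
for~$\Braiding{\Hils[L]}{\Hils[L]}$, the hexagon identities
$\Braiding{\Hils[L]}{\Hils[L]\otimes\Hils[L]}=\Braiding{\Hils[L]}{\Hils[L]}_{23}\Braiding{\Hils[L]}{\Hils[L]}_{12}$
and $\Braiding{\Hils[L]\otimes\Hils[L]}{\Hils[L]}=\Braiding{\Hils[L]}{\Hils[L]}_{12}\Braiding{\Hils[L]}{\Hils[L]}_{23}$,
and the fact that~$\BrMultunit$ commutes with every operator built
naturally out of the two corepresentations, so that the braided
leg-numbering version of~$\BrMultunit_{23}$ coincides with the
plain~$\BrMultunit_{23}$.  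The main obstacle is purely a bookkeeping one:
one must track which braided leg-numbering version of~$\BrMultunit$ occurs
at each stage and verify that the rearrangement lands on the
form~\eqref{eq:top-braided_pentagon} for the dual's braiding and not on
the form~\eqref{eq:bottom-braided_pentagon}.  That it does is forced by
the passage to the opposite braiding, which interchanges the ``top'' and
``bottom'' roles of the two braided versions of~$\BrMultunit_{13}$; this
is exactly why the dual of a top-braided multiplicative unitary is again
top-braided rather than bottom-braided.
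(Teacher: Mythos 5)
Your proposal is correct and follows essentially the same route as the paper, whose own proof is even terser: it simply records that the classical conditions dualize in the well-known way, that the Drinfeld compatibility and invariance conditions are preserved by "routine computations," and that the braided pentagon equation carries over because the duality replaces the braiding by the opposite braiding. Your expansion of each step --- in particular identifying that the dual's top-braided pentagon, written in the original braiding symbols, has the shape of \eqref{eq:bottom-braided_pentagon} and unwinds via the sliding and Yang--Baxter identities to the adjoint of \eqref{eq:top-braided_pentagon} --- is accurate and supplies exactly the detail the paper omits.
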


\begin{proof}
  It is well-known that the dual~\(\DuMultunit\) is again a
  multiplicative unitary, that~\(\Corep{U}\) is a right
  corepresentation of~\(\Multunit\) if and only if~\(\DuCorep{U}\)
  is a left corepresentation of~\(\DuMultunit\), and
  that~\(\DuCorep{V}\) is a left corepresentation of~\(\Multunit\)
  if and only if~\(\Corep{V}\) is a right corepresentation
  of~\(\DuMultunit\).  Routine computations show that the Drinfeld
  compatibility condition and the invariance conditions are also
  preserved by the duality.  The top-braided (or bottom-braided)
  pentagon equation for the dual is equivalent to the top-braided
  (or bottom-braided) pentagon equation for the original braided
  multiplicative unitary because the duality replaces the braiding
  by the opposite braiding.
\end{proof}

Now we define when a braided multiplicative unitary
\((\Multunit,\Corep{U},\DuCorep{V},\BrMultunit)\)
is manageable.  This requires~\(\Multunit\)
to be manageable, that is, there are a strictly positive
operator~\(Q\)
on~\(\Hils\)
and a unitary
\(\widetilde{\Multunit}\in \U(\conj{\Hils}\otimes\Hils)\)
with \(\Multunit[*](Q\otimes Q)\Multunit = Q\otimes Q\) and
\begin{equation}
  \label{eq:Multunit_manageable}
  \bigl(x\otimes u\mid \Multunit\mid z\otimes y\bigr)
  = \bigl(\conj{z}\otimes Q u\mid \widetilde{\Multunit} \mid
  \conj{x}\otimes Q^{-1}y\bigr)
\end{equation}
for all \(x,z\in\Hils\),
\(u\in\dom(Q)\)
and \(y\in\dom(Q^{-1})\)
(see \cite{Woronowicz:Mult_unit_to_Qgrp}*{Definition 1.2}).
Here~\(\conj{\Hils}\)
is the conjugate Hilbert space, and an operator is \emph{strictly
  positive} if it is positive and self-adjoint with trivial kernel.
The condition \(\Multunit[*](Q\otimes Q)\Multunit = Q\otimes Q\)
means that the unitary~\(\Multunit\)
commutes with the unbounded operator~\(Q\otimes Q\).

\begin{definition}
  \label{def:braided_manageable}
  Let \(\Multunit\in\U(\Hils\otimes\Hils)\) be a manageable
  multiplicative unitary and let \(Z\) and~\(Q\) be as above.  A
  braided multiplicative unitary
  \((\Corep{U},\DuCorep{V},\BrMultunit)\) over~\(\Multunit\) is
  \emph{manageable} if there are a strictly positive
  operator~\(Q_{\Hils[L]}\) on~\(\Hils[L]\) and a unitary
  \(\widetilde{\BrMultunit} \widetilde{Z}{}^*\in
  \U(\conj{\Hils[L]}\otimes\Hils[L])\) such that
  \begin{align}
    \label{eq:br_manag_commute_U}
    \Corep{U}(Q_{\Hils[L]}\otimes Q)\Corep{U}^* &= Q_{\Hils[L]}\otimes Q,\\
    \label{eq:br_manag_commute_V}
    \DuCorep{V}(Q\otimes Q_{\Hils[L]})\DuCorep{V}^* &= Q\otimes Q_{\Hils[L]},\\
    \label{eq:br_manag_commute_F}
    \BrMultunit(Q_{\Hils[L]}\otimes Q_{\Hils[L]})\BrMultunit^* &= Q_{\Hils[L]}\otimes Q_{\Hils[L]},\\
    \label{eq:br_manag}
    (x\otimes u\mid Z^* \BrMultunit \mid y\otimes v) &=
    (\conj{y}\otimes Q_{\Hils[L]}(u) \mid \widetilde{\BrMultunit} \widetilde{Z}{}^*
    \mid \conj{x}\otimes Q_{\Hils[L]}^{-1}(v))
  \end{align}
  for all \(x,y\in\Hils[L]\), \(u\in\dom(Q_{\Hils[L]})\) and
  \(v\in\dom(Q_{\Hils[L]}^{-1})\).
\end{definition}

We have written~\(\widetilde{\BrMultunit} \widetilde{Z}{}^*\) and
not~\(\widetilde{\BrMultunit}\) in~\eqref{eq:br_manag} to
make the formula more symmetric and to clarify the manageability of
the dual of a braided multiplicative unitary.

We now describe the operator~\(\widetilde{Z}\) that we want to use.  The
corepresentation~\(\Corep{U}\) of~\(\Multunit\) on~\(\Hils[L]\)
induces a contragradient corepresentation on~\(\conj{\Hils[L]}\).
This is of the form~\(\widetilde{\Corep{U}}{}^*\), where
\(\widetilde{\Corep{U}}\in \U(\conj{\Hils[L]}\otimes\Hils)\)
satisfies a variant of~\eqref{eq:Multunit_manageable}, see
\cite{Woronowicz:Mult_unit_to_Qgrp}*{Theorem 1.6} and
\cite{Soltan-Woronowicz:Multiplicative_unitaries}*{Proposition 10}.
Since~\(\widetilde{\Corep{U}}{}^*\) is a right corepresentation
of~\(\Multunit\) on~\(\conj{\Hils[L]}\), there is a unique unitary
\(\widetilde{Z}\in\U(\conj{\Hils[L]}\otimes \Hils[L])\) that
satisfies
\begin{equation}
  \label{eq:braiding-manag}
  \widetilde{Z}_{13} = \DuCorep{V}_{23}\widetilde{\Corep{U}}_{12}
  \DuCorep{V}{}_{23}^*\widetilde{\Corep{U}}{}_{12}^*
  \qquad
  \text{in }\U(\conj{\Hils[L]}\otimes\Hils\otimes\Hils[L]).
\end{equation}
We use this unitary in~\eqref{eq:br_manag}.  Of course, it does not
matter which unitary~\(\widetilde{Z}\) we use because we may absorb it
in~\(\widetilde{\BrMultunit}\).

\begin{proposition}
  \label{prop:br_dual_manag}
  The dual of a manageable braided multiplicative unitary is again
  manageable.
\end{proposition}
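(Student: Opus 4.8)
The plan is to lean on Proposition~\ref{pro:dual_brmult}, which already gives that the dual \((\Corep{V},\DuCorep{U},\DuBrMultunit)\) over~\(\DuMultunit\) is a braided multiplicative unitary, so that all that remains is to produce the data of Definition~\ref{def:braided_manageable} for it. First I would record that \(\DuMultunit=\Sigma\Multunit[*]\Sigma\) is manageable with the \emph{same} operator~\(Q\): the relation \(\Multunit[*](Q\otimes Q)\Multunit=Q\otimes Q\) gives \(\DuMultunit[*](Q\otimes Q)\DuMultunit=Q\otimes Q\), and a twisted-adjoint unitary~\(\widetilde{\DuMultunit}\) is obtained from~\(\widetilde{\Multunit}\) by the formal construction behind \cite{Woronowicz:Mult_unit_to_Qgrp}*{Theorem~1.6} (cf.\ also \cite{Soltan-Woronowicz:Multiplicative_unitaries}). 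I would also note that the braiding operator attached by~\eqref{eq:braiding} to the dual triple \((\DuMultunit,\Corep{V},\DuCorep{U})\) is \(\hat Z\defeq\Sigma Z^*\Sigma\); this is because the braiding of the dual equals the opposite braiding \(\Dualbraiding{\Hils[L]}{\Hils[L]}=(\Braiding{\Hils[L]}{\Hils[L]})^*=(Z\Sigma)^*=\Sigma Z^*\) of the original, which is \(\hat Z\Sigma\). The candidate strictly positive operator on~\(\Hils[L]\) is then the unchanged~\(Q_{\Hils[L]}\).

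The next step is the three commutation relations of Definition~\ref{def:braided_manageable} for the dual, which are routine. Relation~\eqref{eq:br_manag_commute_V} says \(\DuCorep{V}\) commutes with \(Q\otimes Q_{\Hils[L]}\); conjugating by~\(\Sigma\) and using \(\Corep{V}=\Sigma\DuCorep{V}^*\Sigma\) shows \(\Corep{V}\) commutes with \(Q_{\Hils[L]}\otimes Q\), as required; likewise \eqref{eq:br_manag_commute_U} and \(\DuCorep{U}=\Sigma\Corep{U}^*\Sigma\) give that \(\DuCorep{U}\) commutes with \(Q\otimes Q_{\Hils[L]}\). For \(\DuBrMultunit=\Dualbraiding{\Hils[L]}{\Hils[L]}\BrMultunit^*\Braiding{\Hils[L]}{\Hils[L]}=\Sigma Z^*\BrMultunit^*Z\Sigma\) it suffices that \(Z\), \(\BrMultunit\) and~\(\Sigma\) each commute with \(Q_{\Hils[L]}\otimes Q_{\Hils[L]}\): for~\(\BrMultunit\) this is~\eqref{eq:br_manag_commute_F}, for~\(\Sigma\) it is clear, and for~\(Z\) it follows from~\eqref{eq:braiding}, since \eqref{eq:br_manag_commute_U} and~\eqref{eq:br_manag_commute_V} make \(\Corep{U}_{12}\) and~\(\DuCorep{V}_{23}\) commute with \((Q_{\Hils[L]})_1 Q_2 (Q_{\Hils[L]})_3\) in \(\U(\Hils[L]\otimes\Hils\otimes\Hils[L])\), hence so does \(Z_{13}\).

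The heart of the argument is the remaining condition~\eqref{eq:br_manag}: a unitary \(G\in\U(\conj{\Hils[L]}\otimes\Hils[L])\) is needed with \((x\otimes u\mid\hat Z^*\DuBrMultunit\mid y\otimes v)=(\conj y\otimes Q_{\Hils[L]}u\mid G\mid\conj x\otimes Q_{\Hils[L]}^{-1}v)\). Using \(\hat Z=\Sigma Z^*\Sigma\), \(\DuBrMultunit=\Sigma Z^*\BrMultunit^*Z\Sigma\) and \(ZZ^*=\Sigma\Sigma=1\) one computes
\[
\hat Z^*\DuBrMultunit=\Sigma\,(Z^*\BrMultunit)^*\,\Sigma .
\]
Now \(Z^*\BrMultunit\in\U(\Hils[L]\otimes\Hils[L])\) commutes with \(Q_{\Hils[L]}\otimes Q_{\Hils[L]}\) by the previous step, and~\eqref{eq:br_manag} is exactly the assertion that \(Z^*\BrMultunit\) satisfies the manageability relation~\eqref{eq:Multunit_manageable} with operator \(Q_{\Hils[L]}\) and twisted adjoint \(\widetilde{\BrMultunit}\widetilde{Z}^*\in\U(\conj{\Hils[L]}\otimes\Hils[L])\) (the absence of a pentagon equation for \(Z^*\BrMultunit\) is immaterial here). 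I would then invoke the purely formal construction behind \cite{Woronowicz:Mult_unit_to_Qgrp}*{Theorem~1.6}, which turns the twisted adjoint of a unitary~\(T\) commuting with \(Q_{\Hils[L]}\otimes Q_{\Hils[L]}\) into the twisted adjoint of \(\Sigma T^*\Sigma\) — the same mechanism that dualises a manageable multiplicative unitary — and apply it to \(T=Z^*\BrMultunit\) to obtain the desired~\(G\). Together with the commutation relations above, \(G\) furnishes the data of Definition~\ref{def:braided_manageable} for the dual relative to~\(\DuMultunit\) and~\(\hat Z\), which completes the proof.

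The step I expect to be the main obstacle is the last one: making the passage \(\widetilde{\BrMultunit}\widetilde{Z}^*\rightsquigarrow G\) explicit requires the canonical antiunitary \(\Hils[L]\to\conj{\Hils[L]}\), \(\xi\mapsto\conj\xi\), and careful bookkeeping of where \(Q_{\Hils[L]}\) and \(Q_{\Hils[L]}^{-1}\) appear — the map essentially flips the two legs and twists by \(Q_{\Hils[L]}^{\pm1}\) — and one must verify that the resulting operator is a genuine unitary \(\conj{\Hils[L]}\otimes\Hils[L]\to\conj{\Hils[L]}\otimes\Hils[L]\). This is also precisely why the combined unitary \(\widetilde{\BrMultunit}\widetilde{Z}^*\) rather than its two factors is the natural datum in Definition~\ref{def:braided_manageable}. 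A secondary point is to pin down the orientation conventions so that the braiding for the dual triple is indeed \(\hat Z=\Sigma Z^*\Sigma\), the opposite braiding of the original, which is where Definition~\ref{def:dual_brmult} and the discussion immediately following it are used.
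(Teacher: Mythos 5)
Your proposal is correct and follows essentially the same route as the paper: reduce everything to the observation that \(\hat Z^*\DuBrMultunit=\Flip(Z^*\BrMultunit)^*\Flip\), check the commutation relations with \(Q_{\Hils[L]}\otimes Q_{\Hils[L]}\) exactly as you do, and then transport the twisted adjoint \(\widetilde{\BrMultunit}\widetilde{Z}{}^*\) through the flip-and-transpose duality. The paper merely makes the step you defer to ``the formal construction behind Theorem~1.6'' explicit, first proving the swapped identity~\eqref{eq:equiv_br_manag_mod} and then exhibiting \(\widetilde{\DuBrMultunit}\widetilde{\hat Z}{}^*=(\Flip\widetilde{Z}\widetilde{\BrMultunit}{}^*\Flip)^{\transpose\otimes\transpose}\) by a short chain of equalities.
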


\begin{proof}
  Let \((\Corep{U},\DuCorep{V},\BrMultunit)\) be a manageable
  top-braided multiplicative unitary over~\(\Multunit\), let \(Z\)
  and~\(\widetilde{Z}\) be as in \eqref{eq:braiding}
  and~\eqref{eq:braiding-manag}.  Let \(\widetilde{\Multunit},Q\)
  witness the manageability of~\(\Multunit\) and let
  \(\widetilde{\BrMultunit}\) and~\(Q_{\Hils[L]}\) witness the manageability
  of \((\Corep{U},\DuCorep{V},\BrMultunit)\).

  On \(\Hils[L]\otimes\Hils\otimes\Hils[L]\), both
  \(\Corep{U}_{12}\) and~\(\DuCorep{V}_{23}\) commute with
  \(Q_{\Hils[L]}\otimes Q\otimes Q_{\Hils[L]}\) by \eqref{eq:br_manag_commute_U}
  and~\eqref{eq:br_manag_commute_V}.  Hence so does~\(Z\)
  by~\eqref{eq:braiding}.  Thus
  \begin{equation}
    \label{eq:Z_comm_Q}
    Z(Q_{\Hils[L]}\otimes Q_{\Hils[L]})Z^* = Q_{\Hils[L]}\otimes Q_{\Hils[L]}.
  \end{equation}
  Together with~\eqref{eq:br_manag_commute_F}, this implies
  that~\(Z^* \BrMultunit\) commutes with~\(Q_{\Hils[L]}\otimes
  Q_{\Hils[L]}\).  This together with~\eqref{eq:br_manag} implies
  that~\(\widetilde{\BrMultunit} \widetilde{Z}{}^*\) commutes
  with~\(Q_{\Hils[L]}^\transpose\otimes Q_{\Hils[L]}^{-1}\), compare
  the proof of Lemma~\ref{lem:V_tilde_commute_Q} or
  \cite{Woronowicz:Mult_unit_to_Qgrp}*{Proposition 1.4.(1)}.

  The unitary~\(\widetilde{\Corep{U}}\) commutes
  with~\(Q_{\Hils[L]}^\transpose\otimes Q^{-1}\), compare
  Lemma~\ref{lem:V_tilde_commute_Q} or
  \cite{Woronowicz:Mult_unit_to_Qgrp}*{Proposition 1.4.(1)}.  This
  together with \eqref{eq:br_manag_commute_V}
  and~\eqref{eq:braiding-manag} implies
  \begin{equation}
    \label{eq:tilde-Z-W-comm_Q1}
    \widetilde{Z}(Q_{\Hils[L]}^\transpose\otimes Q_{\Hils[L]}^{-1})\widetilde{Z}{}^*
    = Q_{\Hils[L]}^\transpose\otimes Q_{\Hils[L]}^{-1},
  \end{equation}
  compare the proof of~\eqref{eq:Z_comm_Q}.  Hence
  \begin{equation}
    \label{eq:tilde-Z-W-comm_Q2}
    \widetilde{\BrMultunit}(Q_{\Hils[L]}^\transpose\otimes Q_{\Hils[L]}^{-1})
    \widetilde{\BrMultunit}{}^*
    = Q_{\Hils[L]}^\transpose\otimes Q_{\Hils[L]}^{-1}
  \end{equation}
  because~\(\widetilde{\BrMultunit} \widetilde{Z}{}^*\) commutes
  with~\(Q_{\Hils[L]}^\transpose\otimes Q_{\Hils[L]}^{-1}\) as well.

  If \(y\in\dom(Q_{\Hils[L]})\), \(x\in\dom(Q_{\Hils[L]}^{-1})\),
  and \(u,v\in\Hils[L]\), then
  \begin{equation}
    \label{eq:equiv_br_manag_mod}
    (x\otimes u\mid Z^* \BrMultunit \mid y\otimes v) =
    (\conj{Q_{\Hils[L]}(y)}\otimes u \mid \widetilde{\BrMultunit}\widetilde{Z}{}^*
    \mid \conj{Q_{\Hils[L]}^{-1}(x)}\otimes v);
  \end{equation}
  this is proved like
  \cite{Woronowicz:Mult_unit_to_Qgrp}*{Proposition 1.4 (2)}.  We
  rewrite this using the unitaries \(\widetilde{\hat{Z}},
  \widetilde{\DuBrMultunit}\in \U(\conj{\Hils[L]}\otimes\Hils[L])\)
  defined by
  \[
  \widetilde{\hat{Z}}\defeq
  \bigl(\Flip\widetilde{Z}{}^*\Flip\bigr)^{\transpose\otimes\transpose},
  \qquad
  \widetilde{\DuBrMultunit}\defeq
  \bigl(\Flip\widetilde{\BrMultunit}{}^*\Flip\bigr)^{\transpose\otimes\transpose}.
  \]
  By definition, \(\hat{Z}{}^* \DuBrMultunit = \Flip\BrMultunit{}^*
  Z\Flip\) and \(\widetilde{\DuBrMultunit}\widetilde{\hat{Z}}{}^* =
  (\Flip\widetilde{Z}\widetilde{\BrMultunit}{}^*
  \Flip)^{\transpose\otimes\transpose}\).
  Thus~\eqref{eq:equiv_br_manag_mod} gives
  \begin{align*}
    (x\otimes u\mid \hat{Z}{}^* \DuBrMultunit \mid y\otimes v) &=
    (x\otimes u\mid \Flip\BrMultunit{}^*Z\Flip \mid y\otimes v)\\
    &=\overline{ (y\otimes v\mid \Flip Z^*\BrMultunit\Flip \mid x\otimes u)}\\
    &=\overline{(v\otimes y\mid Z^*\BrMultunit \mid u\otimes x)}\\
    &=\overline{(\conj{Q_{\Hils[L]}(u)}\otimes y\mid
      \widetilde{\BrMultunit} \widetilde{Z}{}^*\mid
      \conj{Q_{\Hils[L]}^{-1}(v)} \otimes x)}\\
    &=(\conj{Q_{\Hils[L]}^{-1}(v)}\otimes x\mid
    \widetilde{Z}\widetilde{\BrMultunit}{}^*\mid
    \conj{Q_{\Hils[L]}(u)}\otimes y)\\
    &= (x\otimes\conj{Q_{\Hils[L]}^{-1}(v)}\mid
    \Flip\widetilde{Z}\widetilde{\BrMultunit}{}^*\Flip\mid
    y\otimes\conj{Q_{\Hils[L]}(u)})\\
    &= (\conj{y}\otimes Q_{\Hils[L]}(u)\mid
    (\Flip\widetilde{Z}\widetilde{\BrMultunit}{}^*\Flip)^{\transpose\otimes\transpose}
    \mid \conj{x}\otimes Q_{\Hils[L]}^{-1}(v))\\
    &= (\conj{y}\otimes Q_{\Hils[L]}(u) \mid
    \widetilde{\DuBrMultunit}\widetilde{\hat{Z}}{}^*
    \mid \conj{x}\otimes Q_{\Hils[L]}^{-1}(v)).
  \end{align*}
  Since the unitary~\(Z\) for the dual braided multiplicative
  unitary becomes~\(\hat{Z}\), the operators \(Q_{\Hils[L]}\)
  and~\(\widetilde{\DuBrMultunit}\) witness the manageability
  of~\(\DuBrMultunit\).
\end{proof}

\subsection{Semidirect product multiplicative unitaries}
\label{sec:semidirect_product_mu}

In this section, we construct a semidirect product multiplicative
unitary~\(\Multunit[C]\)
and a projection~\(\ProjBichar\)
out of a braided multiplicative unitary
\((\Corep{U},\DuCorep{V},\BrMultunit)\)
over a multiplicative unitary~\(\Multunit\).
We show that the semidirect product multiplicative
unitary~\(\Multunit[C]\)
is manageable if the braided multiplicative unitary
\((\Corep{U},\DuCorep{V},\BrMultunit)\) is manageable.

The formulas and proofs below are explicit but lengthy because all
four unitaries \(\Multunit\),
\(\Corep{U}\),
\(\DuCorep{V}\),
\(\BrMultunit\)
must enter in the definitions of \(\Multunit[C]\)
and~\(\ProjBichar\)
and all seven conditions on them must be used in the proofs.

\begin{theorem}
  \label{the:standard_mult_from_braided_and_standard}
  Let \((\Corep{U},\DuCorep{V},\BrMultunit)\) be a braided
  multiplicative unitary over a multiplicative unitary~\(\Multunit\).
  Define \(\Multunit[C]_{1234},\ProjBichar \in
  \U(\Hils\otimes\Hils[L]\otimes\Hils\otimes\Hils[L])\) by
  \begin{align}
    \label{eq:Stand_multunit_frm_brd}
    \Multunit[C]_{1234} &\defeq
    \Multunit_{13} \Corep{U}_{23} \DuCorep{V}^*_{34}
    \BrMultunit_{24} \DuCorep{V}_{34},\\
    \ProjBichar_{1234} &\defeq \Multunit_{13} \Corep{U}_{23}.
  \end{align}
  Then \(\Multunit[C]\) and~\(\ProjBichar\) satisfy the four
  pentagon-like equations in
  Proposition~\textup{\ref{pro:qg_projection_mu}}.  Thus they give a
  \(\Cst\)\nb-quantum group with projection when~\(\Multunit[C]\) is
  manageable.
\end{theorem}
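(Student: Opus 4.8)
The plan is to verify that $\Multunit[C]$ and $\ProjBichar$ satisfy the four pentagon-like equations of Proposition~\ref{pro:qg_projection_mu}, one at a time, by direct computation. Throughout I work in $\U(\Hils_1\otimes\Hils[L]_2\otimes\Hils_3\otimes\Hils[L]_4\otimes\Hils_5\otimes\Hils[L]_6)$, where the three copies of $\Hils\otimes\Hils[L]$ occupy the slot pairs $(1,2)$, $(3,4)$, $(5,6)$; thus $\Multunit[C]_{12}=\Multunit_{13}\Corep{U}_{23}\DuCorep{V}^*_{34}\BrMultunit_{24}\DuCorep{V}_{34}$ and $\ProjBichar_{12}=\Multunit_{13}\Corep{U}_{23}$, and $\Multunit[C]_{13}$, $\Multunit[C]_{23}$, $\ProjBichar_{13}$, $\ProjBichar_{23}$ are obtained from these by the obvious relabelling of slots. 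Each side of each equation then becomes a word in the atomic unitaries $\Multunit$, $\Corep{U}$, $\DuCorep{V}$, $\BrMultunit$ (and the braiding $Z$, which by~\eqref{eq:braiding} is itself such a word), and the task is to transform one word into the other using the permitted moves: the pentagon equation~\eqref{eq:pentagon} for $\Multunit$; the corepresentation conditions~\eqref{eq:U_corep} and~\eqref{eq:V_corep}, which carry $\Corep{U}$ and $\DuCorep{V}$ across $\Multunit$; the Drinfeld compatibility~\eqref{eq:U_V_compatible}, which carries $\Corep{U}$ across $\DuCorep{V}$; the invariance relations~\eqref{eq:F_U-invariant} and~\eqref{eq:F_V-invariant}, which say that $\BrMultunit$ commutes with $\Corep{U}\tenscorep\Corep{U}$ and with $\DuCorep{V}\tenscorep\DuCorep{V}$, hence with anything naturally built from them such as $Z$; the top-braided pentagon equation~\eqref{eq:top-braided_pentagon} for $\BrMultunit$; and the trivial commutation of operators on disjoint slots.

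The fourth equation, the pentagon equation for $\ProjBichar=\Multunit_{13}\Corep{U}_{23}$, comes out first and uses only~\eqref{eq:pentagon} and the right corepresentation condition~\eqref{eq:U_corep}: collapsing the triple of $\Multunit$-legs by~\eqref{eq:pentagon} and sliding the $\Corep{U}$-legs through by~\eqref{eq:U_corep} turns $\ProjBichar_{23}\ProjBichar_{12}$ into $\ProjBichar_{12}\ProjBichar_{13}\ProjBichar_{23}$. For the two bicharacter conditions I would write $\Multunit[C]_{12}=\ProjBichar_{12}G$, $\Multunit[C]_{13}=\ProjBichar_{13}G''$, $\Multunit[C]_{23}=\ProjBichar_{23}G'$, with the braided blocks $G\defeq\DuCorep{V}^*_{34}\BrMultunit_{24}\DuCorep{V}_{34}$, $G''\defeq\DuCorep{V}^*_{56}\BrMultunit_{26}\DuCorep{V}_{56}$, $G'\defeq\DuCorep{V}^*_{56}\BrMultunit_{46}\DuCorep{V}_{56}$. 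Using the $\ProjBichar$-pentagon just proved, the condition $\Multunit[C]_{23}\ProjBichar_{12}=\ProjBichar_{12}\ProjBichar_{13}\Multunit[C]_{23}$ reduces to the commutation of $G'$ with $\ProjBichar_{12}$, which holds because their supports $\{4,5,6\}$ and $\{1,2,3\}$ are disjoint, while $\ProjBichar_{23}\Multunit[C]_{12}=\Multunit[C]_{12}\ProjBichar_{13}\ProjBichar_{23}$ reduces to the statement that $G$ commutes with $\ProjBichar_{13}\ProjBichar_{23}$, i.e.\ that the braided block $G$ is equivariant for the $A$-coaction implemented by $\ProjBichar$. The latter is a direct, if somewhat intricate, computation from the corepresentation, Drinfeld and invariance conditions for $\Corep{U}$ and $\DuCorep{V}$: roughly, $\Multunit_{15}$ drops out for support reasons, the factors $\Multunit_{35}$ and $\Corep{U}_{45}$ are carried across the $\DuCorep{V}_{34}$-conjugation by the Drinfeld relation~\eqref{eq:U_V_compatible} and~\eqref{eq:V_corep}, $\Corep{U}_{25}$ commutes with $\DuCorep{V}_{34}$ outright, and the surviving $\Corep{U}$-legs pass through $\BrMultunit_{24}$ by the invariance~\eqref{eq:F_U-invariant}. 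The top-braided pentagon equation plays no role in any of these three equations.

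The pentagon equation $\Multunit[C]_{23}\Multunit[C]_{12}=\Multunit[C]_{12}\Multunit[C]_{13}\Multunit[C]_{23}$ is the main obstacle, and this is where all seven conditions are forced to contribute. After substituting $\Multunit[C]_{12}=\ProjBichar_{12}G$, $\Multunit[C]_{13}=\ProjBichar_{13}G''$, $\Multunit[C]_{23}=\ProjBichar_{23}G'$ and expanding, I would use the $\ProjBichar$-pentagon together with the relations describing how the $\ProjBichar$-factors interact with the braided blocks (the same ones used for the bicharacter conditions, plus their evident variants) to move every $\ProjBichar$-factor to the left, where they assemble into the pentagon word for $\ProjBichar$, leaving behind a residual identity involving only the braided blocks $G$, $G'$, $G''$ and the braiding operators picked up along the way. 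That residual identity is precisely a $\DuCorep{V}$-conjugated form of the top-braided pentagon equation~\eqref{eq:top-braided_pentagon}: when the $\DuCorep{V}_{56}$-legs inside $G'$ and $G''$ are commuted past the $\Corep{U}$-legs living inside the $\ProjBichar$-factors, the commutators produce exactly the combination $\DuCorep{V}\Corep{U}^*\DuCorep{V}^*\Corep{U}$ appearing in the definition~\eqref{eq:braiding} of $Z$, so that the operators $(\Braiding{\Hils[L]}{\Hils[L]})_{23}$ and $(\Dualbraiding{\Hils[L]}{\Hils[L]})_{23}$ of~\eqref{eq:top-braided_pentagon} surface in the positions demanded by that equation; the invariance relations~\eqref{eq:F_U-invariant} and~\eqref{eq:F_V-invariant} are invoked once more to move these $Z$-operators through $\BrMultunit$. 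I expect the genuinely hard part to be purely organisational: keeping track of a dozen or so atomic unitaries over six slots on each side, applying each of the seven moves in exactly the right order, and above all arranging the computation so that the braiding operators materialise precisely where~\eqref{eq:top-braided_pentagon} needs them — a long but essentially mechanical verification once the bookkeeping is set up. Manageability of $\Multunit[C]$ is not addressed here; granted it, the four equations together with Proposition~\ref{pro:qg_projection_mu} yield the $\Cst$\nb-quantum group with projection.
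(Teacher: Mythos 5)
Your overall approach --- direct verification of the four pentagon-like equations by leg-numbering computations on six slots --- is exactly the paper's. Your treatment of three of the four equations is complete and correct, and for the two mixed equations it is in fact more economical than the paper's: you derive \(\Multunit[C]_{3456}\ProjBichar_{1234} = \ProjBichar_{1234}\ProjBichar_{1256}\Multunit[C]_{3456}\) from the already-established \(\ProjBichar\)-pentagon plus the disjointness of the supports of \(G'\) (legs \(4,5,6\)) and \(\ProjBichar_{1234}\) (legs \(1,2,3\)), and you reduce \(\ProjBichar_{3456}\Multunit[C]_{1234} = \Multunit[C]_{1234}\ProjBichar_{1256}\ProjBichar_{3456}\) to the single commutation \(G\,\ProjBichar_{1256}\ProjBichar_{3456} = \ProjBichar_{1256}\ProjBichar_{3456}\,G\), which you then verify by the same moves the paper uses (\eqref{eq:U_V_compatible}, \eqref{eq:V_corep}, \eqref{eq:F_U-invariant}). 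The paper recomputes both of these from scratch, so this part of your write-up is a genuine, if minor, streamlining.

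The gap is in the pentagon equation for \(\Multunit[C]\) itself, which is the heart of the theorem and the only place where the braided pentagon equation \eqref{eq:top-braided_pentagon} enters. You give a strategy, not a computation: the claim that, after extracting the \(\ProjBichar\)-pentagon, the residue is ``precisely a \(\DuCorep{V}\)-conjugated form of the top-braided pentagon equation'' is asserted rather than demonstrated. Concretely, after substituting \(\Multunit[C]_{ij}=\ProjBichar_{ij}G^{(\cdot)}\) and applying the \(\ProjBichar\)-pentagon, the identity still to be proved is \(\ProjBichar_{1256}\ProjBichar_{3456}\,G'G = G\,\ProjBichar_{1256}\,G''\,\ProjBichar_{3456}\,G'\); here \(G\), \(G'\), \(G''\) overlap in legs with one another and with the remaining \(\ProjBichar\)-factors (for instance \(G''\) lives on legs \(2,5,6\) and is trapped between \(\ProjBichar_{1256}\) and \(\ProjBichar_{3456}\), with each of which it shares legs), so nothing cancels for free, and the entire burden of the paper's page-long manipulation --- the precise order in which \eqref{eq:U_corep}, \eqref{eq:V_corep}, \eqref{eq:U_V_compatible}, \eqref{eq:F_U-invariant}, \eqref{eq:F_V-invariant} are applied, and the exact point where \eqref{eq:braiding} converts \(\Corep{U}_{45}\DuCorep{V}^*_{56}Z_{46}\) into \(\DuCorep{V}^*_{56}\Corep{U}_{45}\) so that the braiding lands where \eqref{eq:top-braided_pentagon} needs it --- is deferred as ``essentially mechanical.'' Your outline names all the right ingredients and is consistent with how the paper's computation actually runs, so this is a gap of execution rather than of conception; but as written the central equation is not proved.
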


\begin{proof}
  We first verify the pentagon equation~\eqref{eq:pentagon}
  for~\(\Multunit[C]_{1234}\).  Let
  \[
  XXX = \Multunit[C]_{3456} \Multunit[C]_{1234}
  (\Multunit[C])^*_{3456}.
  \]
  We will rewrite this in several steps using the conditions in
  Definition~\ref{def:braided_multiplicative_unitary}.  We use
  \(\{\ldots\}\) to highlight which part of the formula we are
  modifying in the following step.
  Definition~\eqref{eq:Stand_multunit_frm_brd} gives
  \[
  XXX
  = \Multunit_{35} \{\Corep{U}_{45} \DuCorep{V}^*_{56}
  \BrMultunit_{46} \DuCorep{V}_{56}\} \{\Multunit_{13}
  \Corep{U}_{23}\} \DuCorep{V}^*_{34} \BrMultunit_{24}
  \DuCorep{V}_{34} \DuCorep{V}^*_{56} \BrMultunit_{46}^*
  \DuCorep{V}_{56} \Corep{U}_{45}^* \Multunit[*]_{35}.
  \]
  Since \(\Corep{U}_{45} \DuCorep{V}^*_{56} \BrMultunit_{46}
  \DuCorep{V}_{56}\) and \(\Multunit_{13} \Corep{U}_{23}\) commute,
  \[
  XXX =
  \{\Multunit_{35} \Multunit_{13}\} \Corep{U}_{23} \Corep{U}_{45}
  \DuCorep{V}^*_{56} \BrMultunit_{46} \{\DuCorep{V}_{56}\}
  \DuCorep{V}^*_{34} \BrMultunit_{24} \DuCorep{V}_{34}
  \{\DuCorep{V}^*_{56}\} \BrMultunit_{46}^*
  \DuCorep{V}_{56} \Corep{U}_{45}^* \Multunit[*]_{35}.
  \]
  Now we use the pentagon equation~\eqref{eq:pentagon}
  for~\(\Multunit\)
  and commute \(\DuCorep{V}_{56}\)
  with \(\DuCorep{V}^*_{34} \BrMultunit_{24} \DuCorep{V}_{34}\):
  \[
  XXX =
  \Multunit_{13} \Multunit_{15} \{\Multunit_{35} \Corep{U}_{23}\}
  \Corep{U}_{45} \DuCorep{V}^*_{56}
  \{\BrMultunit_{46} \DuCorep{V}^*_{34}\} \BrMultunit_{24}
  \{\DuCorep{V}_{34} \BrMultunit_{46}^*\}
  \DuCorep{V}_{56} \Corep{U}_{45}^* \Multunit[*]_{35}.
  \]
  Equations \eqref{eq:U_corep} and~\eqref{eq:F_V-invariant} turn
  this into
  \[
  \Multunit_{13} \Multunit_{15} \Corep{U}_{23} \Corep{U}_{25}
  \Multunit_{35} \Corep{U}_{45} \{\DuCorep{V}^*_{56}
  \DuCorep{V}^*_{34}\} \DuCorep{V}^*_{36} \BrMultunit_{46}
  \{\DuCorep{V}_{36} \BrMultunit_{24} \DuCorep{V}^*_{36}\}
  \BrMultunit_{46}^* \DuCorep{V}_{36} \{\DuCorep{V}_{34}
  \DuCorep{V}_{56}\} \Corep{U}_{45}^* \Multunit[*]_{35}.
  \]
  Commuting \(\DuCorep{V}_{56}\) with~\(\DuCorep{V}_{34}\) and
  \(\DuCorep{V}_{36}\) with~\(\BrMultunit_{24}\) gives
  \[
  XXX =
  \Multunit_{13} \Multunit_{15} \Corep{U}_{23} \Corep{U}_{25}
  \{\Multunit_{35} \Corep{U}_{45}
  \DuCorep{V}^*_{34}\} \DuCorep{V}^*_{56} \DuCorep{V}^*_{36}
  \BrMultunit_{46} \BrMultunit_{24}
  \BrMultunit_{46}^* \DuCorep{V}_{36} \DuCorep{V}_{56}
  \{\DuCorep{V}_{34} \Corep{U}_{45}^* \Multunit[*]_{35}\}.
  \]
  Now~\eqref{eq:U_V_compatible} gives
  \[
  XXX =
  \Multunit_{13} \Multunit_{15} \Corep{U}_{23} \Corep{U}_{25}
  \DuCorep{V}^*_{34} \Corep{U}_{45} \{\Multunit_{35}
  \DuCorep{V}^*_{56} \DuCorep{V}^*_{36}\}
  \BrMultunit_{46} \BrMultunit_{24}
  \BrMultunit_{46}^* \{\DuCorep{V}_{36} \DuCorep{V}_{56}
  \Multunit[*]_{35}\} \Corep{U}_{45}^* \DuCorep{V}_{34}.
  \]
  We transform this using~\eqref{eq:V_corep}:
  \[
  XXX =
  \Multunit_{13} \Multunit_{15} \Corep{U}_{23} \Corep{U}_{25}
  \DuCorep{V}^*_{34} \Corep{U}_{45} \DuCorep{V}^*_{56}
  \{\Multunit_{35}\} \BrMultunit_{46} \BrMultunit_{24}
  \BrMultunit_{46}^* \{\Multunit[*]_{35}\} \DuCorep{V}_{56}
  \Corep{U}_{45}^* \DuCorep{V}_{34}.
  \]
  We commute~\(\Multunit_{35}\) with~\(\BrMultunit_{46}
  \BrMultunit_{24} \BrMultunit_{46}^*\):
  \[
  XXX =
  \Multunit_{13} \Multunit_{15} \Corep{U}_{23} \Corep{U}_{25}
  \DuCorep{V}^*_{34} \Corep{U}_{45} \DuCorep{V}^*_{56}
  \{\BrMultunit_{46} \BrMultunit_{24} \BrMultunit_{46}^*\}
  \DuCorep{V}_{56} \Corep{U}_{45}^* \DuCorep{V}_{34}.
  \]
  Now we use the braided pentagon
  equation~\eqref{eq:top-braided_pentagon} and the definition of the
  braiding through~\(Z\):
  \[
  XXX =
  \Multunit_{13} \Multunit_{15} \Corep{U}_{23} \{\Corep{U}_{25}
  \DuCorep{V}^*_{34}\} \Corep{U}_{45}
  \{\DuCorep{V}_{56}^* \BrMultunit_{24}\} Z_{46} \BrMultunit_{26} Z^*_{46}
  \DuCorep{V}_{56} \Corep{U}_{45}^* \DuCorep{V}_{34}.
  \]
  Now we commute \(\Corep{U}_{25}\) with~\(\DuCorep{V}^*_{34}\),
  \(\DuCorep{V}_{56}^*\) with~\(\BrMultunit_{24}\):
  \[
  XXX =
  \Multunit_{13} \Multunit_{15} \Corep{U}_{23} \DuCorep{V}^*_{34}
  \Corep{U}_{25} \Corep{U}_{45} \BrMultunit_{24}
  \{\DuCorep{V}_{56}^* Z_{46}\} \BrMultunit_{26} \{Z^*_{46}
  \DuCorep{V}_{56} \Corep{U}_{45}^*\} \DuCorep{V}_{34}.
  \]
  Equation~\eqref{eq:braiding} implies \(\Corep{U}_{45}
  \DuCorep{V}_{56}^* Z_{46} = \DuCorep{V}_{56}^* \Corep{U}_{45}\),
  so this becomes
  \[
  XXX =
  \Multunit_{13} \Multunit_{15} \Corep{U}_{23} \DuCorep{V}^*_{34}
  \{\Corep{U}_{25} \Corep{U}_{45}
  \BrMultunit_{24} \Corep{U}^*_{45}\} \DuCorep{V}^*_{56}
  \{\Corep{U}_{45} \BrMultunit_{26}\Corep{U}^*_{45}\}
  \DuCorep{V}_{56} \DuCorep{V}_{34}.
  \]
  Now we use~\eqref{eq:F_U-invariant} and
  commute~\(\BrMultunit_{26}\) with~\(\Corep{U}_{45}\):
  \[
  XXX =
  \Multunit_{13} \{\Multunit_{15}\} \{\Corep{U}_{23} \DuCorep{V}^*_{34}
  \BrMultunit_{24}\} \{\Corep{U}_{25} \DuCorep{V}^*_{56} \BrMultunit_{26}
  \DuCorep{V}_{56}\} \{\DuCorep{V}_{34}\}.
  \]
  Finally, we commute \(\Multunit_{15}\) with~\(\Corep{U}_{23}
  \DuCorep{V}^*_{34} \BrMultunit_{24}\) and \(\Multunit_{15}
  \Corep{U}_{25} \DuCorep{V}^*_{56} \BrMultunit_{26}
  \DuCorep{V}_{56}\) with~\(\DuCorep{V}_{34}\) to get
  \[
  XXX
  = \{\Multunit_{13} \Corep{U}_{23} \DuCorep{V}^*_{34}
  \BrMultunit_{24} \DuCorep{V}_{34}\} \{\Multunit_{15}
  \Corep{U}_{25} \DuCorep{V}^*_{56} \BrMultunit_{26} \DuCorep{V}_{56}\}
  = \Multunit[C]_{1234} \Multunit[C]_{1256}.
  \]
  This is the desired pentagon equation for~\(\Multunit[C]_{1234}\).

  Next we show that~\(\ProjBichar\) satisfies the pentagon equation:
  \begin{align*}
    \ProjBichar_{3456} \ProjBichar_{1234} \ProjBichar_{3456}
    &= \Multunit_{35} \Corep{U}_{45} \Multunit_{13} \Corep{U}_{23}
    \Corep{U}_{45}^* \Multunit[*]_{35}
    = \Multunit_{35} \Multunit_{13} \Corep{U}_{23} \Multunit[*]_{35}
    \\&= \Multunit_{13} \Multunit_{15} \Multunit_{35} \Corep{U}_{23}
    \Multunit[*]_{35}
    = \Multunit_{13} \Multunit_{15} \Corep{U}_{23} \Corep{U}_{25}
    = \Multunit_{13} \Corep{U}_{23} \Multunit_{15} \Corep{U}_{25}
    \\&= \ProjBichar_{1234} \ProjBichar_{1256}.
  \end{align*}
  The first and last equalities are the definition
  of~\(\ProjBichar\); the second step commutes \(\Corep{U}_{45}\)
  with~\(\Multunit_{13} \Corep{U}_{23}\); the third step uses the
  pentagon equation~\eqref{eq:pentagon} for~\(\Multunit\); the
  fourth step uses~\eqref{eq:U_corep}; the fifth step commutes
  \(\Multunit_{15}\) with~\(\Corep{U}_{23}\).

  Next we prove
  \(\ProjBichar_{3456} \Multunit[C]_{1234} = \Multunit[C]_{1234}
  \ProjBichar_{1256} \ProjBichar_{3456}\)
  or, equivalently,
  \(\ProjBichar_{3456} \Multunit[C]_{1234} \ProjBichar_{3456}^* =
  \Multunit[C]_{1234} \ProjBichar_{1256}\):
  \begin{align*}
    \ProjBichar_{3456} \Multunit[C]_{1234} \ProjBichar_{3456}^*
    &= \Multunit_{35} \Corep{U}_{45} \Multunit_{13}
    \Corep{U}_{23} \DuCorep{V}^*_{34} \BrMultunit_{24}
    \DuCorep{V}_{34} \Corep{U}_{45}^* \Multunit[*]_{35}
    \\&= \Multunit_{35} \Multunit_{13} \Corep{U}_{23}
    \Corep{U}_{45} \DuCorep{V}^*_{34} \BrMultunit_{24}
    \DuCorep{V}_{34} \Corep{U}_{45}^* \Multunit[*]_{35}
    \\&= \Multunit_{13} \Multunit_{15} \Multunit_{35}
    \Corep{U}_{23} \Corep{U}_{45} \DuCorep{V}^*_{34}
    \BrMultunit_{24} \DuCorep{V}_{34}
    \Corep{U}_{45}^* \Multunit[*]_{35}
    \\&= \Multunit_{13} \Multunit_{15} \Corep{U}_{23}
    \Corep{U}_{25} \Multunit_{35} \Corep{U}_{45}
    \DuCorep{V}^*_{34} \BrMultunit_{24} \DuCorep{V}_{34}
    \Corep{U}_{45}^* \Multunit[*]_{35}
    \\&= \Multunit_{13} \Corep{U}_{23} \Multunit_{15}
    \Corep{U}_{25} \DuCorep{V}^*_{34} \Corep{U}_{45}
    \Multunit_{35} \BrMultunit_{24} \Multunit[*]_{35} \Corep{U}_{45}^*
    \DuCorep{V}_{34}
    \\&= \Multunit_{13} \Corep{U}_{23} \DuCorep{V}^*_{34}
    \Multunit_{15} \Corep{U}_{25} \Corep{U}_{45} \BrMultunit_{24}
    \Corep{U}_{45}^* \DuCorep{V}_{34}
    \\&= \Multunit_{13} \Corep{U}_{23} \DuCorep{V}^*_{34}
    \Multunit_{15} \BrMultunit_{24} \Corep{U}_{25}
    \DuCorep{V}_{34}
    \\&= \Multunit_{13} \Corep{U}_{23} \DuCorep{V}^*_{34}
    \BrMultunit_{24} \DuCorep{V}_{34} \Multunit_{15}
    \Corep{U}_{25}
    = \Multunit[C]_{1234} \ProjBichar_{1256}.
  \end{align*}
  The first and last equalities are the definitions of
  \(\Multunit[C]\) and~\(\ProjBichar\); the second, sixth, and
  eighth steps commute unitaries in different legs; the third step
  uses the pentagon equation~\eqref{eq:pentagon} for~\(\Multunit\);
  the fourth step uses~\eqref{eq:U_corep}; the fifth step
  uses~\eqref{eq:U_V_compatible} and commutes unitaries in different
  legs; the seventh step uses~\eqref{eq:F_U-invariant}.

  Finally, we prove \(\Multunit[C]_{3456} \ProjBichar_{1234} =
  \ProjBichar_{1234} \ProjBichar_{1256} \Multunit[C]_{3456}\) by
  computing \(\ProjBichar_{1234}^* \Multunit[C]_{3456}
  \ProjBichar_{1234}\):
  \begin{align*}
    \ProjBichar_{1234}^* \Multunit[C]_{3456} \ProjBichar_{1234}
    &=  \Corep{U}_{23}^* \Multunit[*]_{13} \Multunit_{35}
    \Corep{U}_{45} \DuCorep{V}^*_{56} \BrMultunit_{46}
    \DuCorep{V}_{56} \Multunit_{13} \Corep{U}_{23}
    \\&=\Corep{U}_{23}^* \Multunit[*]_{13} \Multunit_{35}\Multunit_{13}
        \Corep{U}_{45} \DuCorep{V}^*_{56} \BrMultunit_{46}
        \DuCorep{V}_{56}\Corep{U}_{23}
    \\&= \Corep{U}_{23}^* \Multunit_{15}\Multunit_{35}
    \Corep{U}_{45} \DuCorep{V}^*_{56} \BrMultunit_{46}
    \DuCorep{V}_{56}\Corep{U}_{23}
    \\&=\Multunit_{15}\Corep{U}_{23}^* \Multunit_{35} \Corep{U}_{23}
    \Corep{U}_{45} \DuCorep{V}^*_{56} \BrMultunit_{46}
    \DuCorep{V}_{56}
    \\&=\Multunit_{15}\Corep{U}_{25}\Multunit_{35}
    \Corep{U}_{45} \DuCorep{V}^*_{56} \BrMultunit_{46}
    \DuCorep{V}_{56}=\ProjBichar_{1256}\Multunit[C]_{3456}.
  \end{align*}
  The first and last steps are the definitions of \(\Multunit[C]\)
  and~\(\ProjBichar\); the second and fourth steps commute unitaries
  in different legs; the third step uses the pentagon
  equation~\eqref{eq:pentagon} for~\(\Multunit\); the fifth step
  uses~\eqref{eq:U_corep}.
\end{proof}

\begin{theorem}
  \label{the:mang_mult_from_braided_and_standard}
  Let \(\Multunit\) be a manageable multiplicative unitary and let
  \((\Corep{U},\DuCorep{V},\BrMultunit)\) be a manageable braided
  multiplicative unitary over~\(\Multunit\).  Then the multiplicative
  unitaries \(\Multunit[C] \defeq \Multunit_{13} \Corep{U}_{23}
  \DuCorep{V}^*_{34} \BrMultunit_{24} \DuCorep{V}_{34}\) and
  \(\ProjBichar \defeq \Multunit_{13} \Corep{U}_{23}\) are manageable.
\end{theorem}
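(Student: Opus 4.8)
The underlying Hilbert space of both $\Multunit[C]$ and $\ProjBichar$ is $\mathcal K\defeq\Hils\otimes\Hils[L]$, and the plan is to exhibit a strictly positive operator on~$\mathcal K$ together with unitaries on $\conj{\mathcal K}\otimes\mathcal K$ witnessing manageability in the sense of~\eqref{eq:Multunit_manageable}. The modular operator will be $Q_{\mathcal K}\defeq Q\otimes Q_{\Hils[L]}$, where $Q$ witnesses the manageability of~$\Multunit$ and $Q_{\Hils[L]}$ that of the braided multiplicative unitary $(\Corep{U},\DuCorep{V},\BrMultunit)$.

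First I would dispose of the commutation relation $\Multunit[C](Q_{\mathcal K}\otimes Q_{\mathcal K})(\Multunit[C])^{*}=Q_{\mathcal K}\otimes Q_{\mathcal K}$, and likewise for~$\ProjBichar$. On $\mathcal K\otimes\mathcal K=\Hils\otimes\Hils[L]\otimes\Hils\otimes\Hils[L]$ the operator $Q_{\mathcal K}\otimes Q_{\mathcal K}$ is the product of~$Q$ on leg~$1$, $Q_{\Hils[L]}$ on leg~$2$, $Q$ on leg~$3$ and $Q_{\Hils[L]}$ on leg~$4$, and each of the five factors of~\eqref{eq:Stand_multunit_frm_brd} commutes with it: $\Multunit_{13}$ because~$\Multunit$ commutes with $Q\otimes Q$, $\Corep{U}_{23}$ by~\eqref{eq:br_manag_commute_U}, $\DuCorep{V}_{34}$ and $\DuCorep{V}^{*}_{34}$ by~\eqref{eq:br_manag_commute_V}, and $\BrMultunit_{24}$ by~\eqref{eq:br_manag_commute_F}, the remaining $Q$-factors acting on untouched legs in each case. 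In passing this also gives that~$Z$ commutes with $Q_{\Hils[L]}\otimes Q_{\Hils[L]}$, as in~\eqref{eq:Z_comm_Q}, which is needed below.

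Next I would construct $\widetilde{\Multunit[C]}\in\U(\conj{\mathcal K}\otimes\mathcal K)$. The ingredients are: $\widetilde{\Multunit}$ from the manageability of~$\Multunit$; the contragredient $\widetilde{\Corep{U}}\in\U(\conj{\Hils[L]}\otimes\Hils)$ of the right corepresentation~$\Corep{U}$ (see \cite{Woronowicz:Mult_unit_to_Qgrp}*{Theorem 1.6}, \cite{Soltan-Woronowicz:Multiplicative_unitaries}*{Proposition 10}); and $\widetilde{\BrMultunit}\widetilde{Z}{}^{*}$ from~\eqref{eq:br_manag}, with $\widetilde{Z}$ from~\eqref{eq:braiding-manag}. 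The key point is to rewrite~\eqref{eq:Stand_multunit_frm_brd} so that the combination $Z^{*}\BrMultunit$ — for which~\eqref{eq:br_manag} supplies a contragredient — appears in place of~$\BrMultunit$, by absorbing~$Z$ into the flanking $\DuCorep{V}_{34}$- and $\Corep{U}_{23}$-factors via the defining relation~\eqref{eq:braiding} of~$Z$; one then replaces $\Multunit_{13}$, $\Corep{U}_{23}$ by $\widetilde{\Multunit}_{13}$, $\widetilde{\Corep{U}}_{23}$ and $(Z^{*}\BrMultunit)_{24}$ by $(\widetilde{\BrMultunit}\widetilde{Z}{}^{*})_{24}$, while the $\DuCorep{V}_{34}$-factors, which lie entirely in the second copy of~$\mathcal K$ and commute with the relevant $Q$-twists by~\eqref{eq:br_manag_commute_V}, survive unchanged. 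Dropping the braided part altogether, the corresponding operator for~$\ProjBichar$ is simply $\widetilde{\ProjBichar}\defeq\widetilde{\Multunit}_{13}\widetilde{\Corep{U}}_{23}$ (acting trivially on the fourth leg).

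Finally I would verify~\eqref{eq:Multunit_manageable}: evaluating $(\xi\otimes\eta\mid\Multunit[C]\mid\zeta\otimes\theta)$ for $\xi,\zeta\in\mathcal K$, $\eta\in\dom(Q_{\mathcal K})$, $\theta\in\dom(Q_{\mathcal K}^{-1})$, one inserts the rewritten definition of~$\Multunit[C]$ and pushes the pairing through its factors one at a time, each step using the contragredient relation of the factor in question; the $Q$-twists on the $\Hils$-leg contributed by $\widetilde{\Multunit}$ and~$\widetilde{\Corep{U}}$ combine with the $Q_{\Hils[L]}$-twists on the $\Hils[L]$-leg contributed by~$Z^{*}\BrMultunit$ via~\eqref{eq:br_manag} into precisely the $Q_{\mathcal K}$-twists required, the $\DuCorep{V}_{34}$-factors merely carrying the twists along. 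The case of~$\ProjBichar$ is the same argument keeping only $\Multunit_{13}$ and~$\Corep{U}_{23}$, and is short. The main obstacle is this last verification: besides being long, the delicate points are the rewriting that isolates $Z^{*}\BrMultunit$ and the bookkeeping of the $Z$- and $\widetilde{Z}$-twists together with the conjugations; the computation in the proof of Proposition~\ref{prop:br_dual_manag} is a close template for the cancellations involved.
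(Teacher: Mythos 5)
Your proposal is correct and follows essentially the same route as the paper: the same modular operator \(Q\otimes Q_{\Hils[L]}\), the same rewriting of \(\Multunit[C]\) via~\eqref{eq:braiding} to isolate \(Z^*\BrMultunit\), and the same contragredient witness \(\widetilde{\Multunit}_{13}\DuCorep{V}^*_{34}\widetilde{\BrMultunit}_{24}\widetilde{\Corep{U}}_{23}\DuCorep{V}_{34}\), verified by pushing the pairing through the factors with orthonormal-basis resolutions exactly as in Lemma~\ref{lem:mang_mult_from_braided_and_standard} (using the basis of Lemma~\ref{lem:nice_basis_Q}). The only divergence is that for \(\ProjBichar\) you exhibit the witness \(\widetilde{\Multunit}_{13}\widetilde{\Corep{U}}_{23}\) directly, whereas the paper deduces its manageability from that of \(\Multunit[C]\) via Proposition~\ref{pro:ProjBichar_manageable}; both work.
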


\begin{proof}
  Let \(\widetilde{\Multunit}\)
  and~\(Q\)
  witness the manageability of~\(\Multunit\),
  and let~\(\widetilde{\BrMultunit}\)
  and~\(Q_{\Hils[L]}\)
  witness the manageability of~\(\BrMultunit\).
  The construction of the unitary~\(\widetilde{\Corep{U}}\)
  in~\eqref{eq:second_leg_bichar_adapted} works for any right
  corepresentation of~\(\Multunit\)
  by the same argument; in particular, it works for~\(\Corep{U}\),
  so we get
  \(\widetilde{\Corep{U}} \in \U(\conj{\Hils[L]}\otimes\Hils)\) with
  \begin{equation}
    \label{eq:Corep_U_tilde}
    \bigl<x\otimes u\big| \Corep{U}\big| z\otimes y\bigr>
    = \bigl<\conj{z}\otimes Q u\big| \widetilde{\Corep{U}}\big|
    \conj{x}\otimes Q^{-1}y\bigr>
  \end{equation}
  for all \(x,z\in\Hils[L]\), \(u\in\dom(Q)\) and
  \(y\in\dom(Q^{-1})\).

  Let \(Q^C \defeq Q\otimes Q_{\Hils[L]}\in\U(\Hils\otimes\Hils[L])\) and
  \begin{equation}
    \label{eq:manageable_W_1234}
    \widetilde{\Multunit}{}^C_{1234} \defeq
    \widetilde{\Multunit}_{13} \DuCorep{V}^*_{34}
    \widetilde{\BrMultunit}_{24} \widetilde{\Corep{U}}_{23}
    \DuCorep{V}_{34}
    \in \U(\conj{\Hils}\otimes\conj{\Hils[L]}\otimes
    \Hils\otimes\Hils[L]).
  \end{equation}
  We claim that these operators witness the manageability
  of~\(\Multunit[C]\).  It is clear that~\(Q^C\) is strictly positive.

  The operators \(\Multunit_{13}\), \(\DuCorep{V}_{34}\),
  \(\Corep{U}_{23}\) and~\(\BrMultunit_{24}\) all commute with
  \(Q^C\otimes Q^C = Q\otimes Q_{\Hils[L]}\otimes Q\otimes
  Q_{\Hils[L]}\) by the manageability assumptions.
  Hence~\(\Multunit[C]\) commutes with~\(Q^C\otimes Q^C\).  It
  remains to check~\eqref{eq:Multunit_manageable} for
  \(\Multunit[C]\), \(\widetilde{\Multunit}{}^C\) and~\(Q^C\).  We
  relegate this technical computation to
  Lemma~\ref{lem:mang_mult_from_braided_and_standard} in the
  appendix.  This finishes the proof that~\(\Multunit[C]\) is
  manageable.  Now Proposition~\ref{pro:ProjBichar_manageable} shows
  that~\(\ProjBichar\) is manageable as well.
\end{proof}

\subsection{Analysis of a quantum group with projection}
\label{sec:Qnt_grp_with_proj}

In this section, we construct a braided multiplicative unitary from a
quantum group with projection.  Our starting point is a Hilbert
space~\(\Hils\) with two unitaries
\(\Multunit[C],\ProjBichar\in\U(\Hils\otimes\Hils)\) satisfying the
conditions in Proposition~\ref{pro:qg_projection_mu}.  We must
construct another Hilbert space~\(\Hils[L]\) with operators
\(\Corep{U}\in\U(\Hils[L]\otimes\Hils)\),
\(\DuCorep{V}\in\U(\Hils\otimes\Hils[L])\) and
\(\BrMultunit\in\U(\Hils[L]\otimes\Hils[L])\) as in
Definition~\ref{def:braided_multiplicative_unitary}.

In particular, the corepresentations \(\Corep{U}\)
and~\(\DuCorep{V}\) form a Drinfeld pair for the multiplicative
unitary~\(\ProjBichar\).  The simplest
general construction of such a Drinfeld pair lives on the tensor
product Hilbert space \(\Hils[L] \defeq \conj{\Hils} \otimes
\Hils\), where~\(\conj{\Hils}\) denotes the conjugate Hilbert space
of~\(\Hils\).  Therefore, we will use this rather large Hilbert
space.

Let \(\G = (A,\Comult[A])\) be the \(\Cst\)\nb-quantum group generated
by~\(\ProjBichar\), which is manageable by
Proposition~\ref{pro:ProjBichar_manageable}.
Let \(\G[H] = (C,\Comult[C])\)
be the \(\Cst\)\nb-quantum
group generated by the manageable multiplicative
unitary~\(\Multunit[C]\).
By construction, we have inclusion maps
\(\iota \colon C\to \Bound(\Hils)\)
and \(\hat{\iota} \colon C\to \Bound(\Hils)\),
which are non-degenerate \Star{}homomorphisms.  The reduced
bicharacter is the unique unitary
\(\multunit[C]\in\U(\hat{C}\otimes C)\)
with \(\Multunit[C] = (\hat{\iota}\otimes \iota)(\multunit[C])\)
or, briefly, \(\Multunit[C]= \multunit[C]_{\hat{\iota}\iota}\).
By construction, \(A\subseteq \Mult(C)\)
and \(\hat{A}\subseteq \Mult(\hat{C})\)
as \(\Cst\)\nb-subalgebras of \(\Bound(\Hils)\).

The representations \((\iota,\hat{\iota})\)
form a Heisenberg pair for the quantum group~\((C,\Comult[C])\)
in the notation of~\cite{Meyer-Roy-Woronowicz:Twisted_tensor}.  This
Heisenberg pair generates an anti-Heisenberg pair
\(\alpha\colon C\to\Bound(\conj{\Hils})\),
\(\hat\alpha\colon \hat{C} \to \Bound(\conj{\Hils})\)
by \cite{Meyer-Roy-Woronowicz:Twisted_tensor}*{Lemma 3.6}.  Thus
\begin{equation}
  \label{eq:anti-Heisenberg_MultunitC}
  \multunit[C]_{1 \alpha} \multunit[C]_{\hat\alpha 3}
  =  \multunit[C]_{\hat\alpha 3} \multunit[C]_{1 3} \multunit[C]_{1 \alpha}
  \qquad\text{in }\U(\hat{C} \otimes \Comp(\conj{\Hils}) \otimes \hat{C}).
\end{equation}
The restriction of a Heisenberg or anti-Heisenberg pair for~\(\G[H]\)
to~\(\G\)
remains a Heisenberg or anti-Heisenberg pair, respectively.  Thus
\begin{equation}
  \label{eq:anti-Heisenberg_MultunitA}
  \projbichar_{1 \alpha}\projbichar_{\hat\alpha 3}
  = \projbichar_{\hat\alpha 3} \projbichar_{1 3} \projbichar_{1 \alpha}
  \qquad\text{in } \U(\hat{A} \otimes \Comp(\conj{\Hils}) \otimes \hat{A}).
\end{equation}
To make computations shorter, we shall use leg numbering notation such
as
\(\projbichar_{ij}, \multunit[C]_{ij}\in \U(\conj{\Hils} \otimes \Hils
\otimes \conj{\Hils} \otimes\Hils)\)
for \(1\le i<j\le 4\).
This means the unitary acting on the \(i\)th
and \(j\)th
tensor factor by applying the appropriate representations of \(C\)
or~\(\hat{C}\)
to the two legs of \(\projbichar\)
or~\(\multunit[C]\),
respectively.  For instance,
\(\projbichar_{12} = (\hat\alpha\otimes \iota)(\projbichar) \otimes
1_{\conj{\Hils} \otimes \Hils}\).
This notation is not ambiguous if we also specify the Hilbert space on
which the operator acts because we have given one representation of
\(C\) and~\(\hat{C}\) on \(\Hils\) and~\(\conj{\Hils}\) each.  We let
\begin{alignat}{2}
  \label{eq:U_from_projection}
  \Corep{U} &\defeq \projbichar_{23} \projbichar_{13}
  \defeq (\hat{\iota}\otimes\iota)\projbichar_{23} \cdot
  (\hat{\alpha}\otimes\iota)\projbichar_{13}
  &\qquad &\text{in } \U(\conj{\Hils}  \otimes \Hils \otimes \Hils),\\
  \label{eq:V_from_projection}
  \DuCorep{V} &\defeq \projbichar_{12} \projbichar_{13}
  \defeq (\hat{\iota}\otimes\alpha)\projbichar_{12} \cdot
  (\hat{\iota}\otimes\iota)\projbichar_{13}
  &\qquad &\text{in } \U(\Hils \otimes \conj{\Hils}  \otimes \Hils),\\
  \label{eq:F_from_projection}
  \BrMultunit &\defeq \projbichar_{14}^* \projbichar_{24}^*
  \multunit[C]_{24} \multunit[C]_{14}
  &\qquad &\text{in } \U(\conj{\Hils} \otimes \Hils \otimes
  \conj{\Hils}  \otimes \Hils).
\end{alignat}

\begin{theorem}
  \label{the:analysis_qg_projection}
  The unitaries \(\ProjBichar\in\U(\Hils\otimes\Hils)\),
  \(\Corep{U}\in\U(\Hils[L]\otimes\Hils)\),
  \(\DuCorep{V}\in\U(\Hils\otimes\Hils[L])\),
  and \(\BrMultunit\in\U(\Hils[L]\otimes\Hils[L])\)
  form a braided multiplicative unitary.
\end{theorem}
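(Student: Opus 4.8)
The plan is to check, one at a time, the six conditions \eqref{eq:U_corep}--\eqref{eq:top-braided_pentagon} of Definition~\ref{def:braided_multiplicative_unitary} for the data \eqref{eq:U_from_projection}--\eqref{eq:F_from_projection}, with $\Multunit\defeq\ProjBichar$ and $\Hils[L]\defeq\conj{\Hils}\otimes\Hils$; the remaining part of that definition --- that the base unitary is a manageable multiplicative unitary --- holds because the pentagon equation \eqref{eq:pentagon} for $\ProjBichar$ is the fourth relation of Proposition~\ref{pro:qg_projection_mu} and $\ProjBichar$ is manageable by Proposition~\ref{pro:ProjBichar_manageable}. Each of the six checks is a normal-ordering computation with the leg-numbered unitaries $\projbichar,\multunit[C]$ on $\conj{\Hils}\otimes\Hils\otimes\conj{\Hils}\otimes\Hils$. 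The inputs are the four relations of Proposition~\ref{pro:qg_projection_mu} in their Heisenberg realisation $\ProjBichar=\projbichar_{\hat\iota\iota}$, $\Multunit[C]=\multunit[C]_{\hat\iota\iota}$ (so in particular $\projbichar$ and $\multunit[C]$ are bicharacters, the former as a quantum group morphism $\G[H]\to\G$ and the latter as the reduced bicharacter), the Heisenberg relations for the pair $(\iota,\hat\iota)$, and the anti-Heisenberg relations \eqref{eq:anti-Heisenberg_MultunitC}, \eqref{eq:anti-Heisenberg_MultunitA} for $(\alpha,\hat\alpha)$, which dictate how the $\conj{\Hils}$-legs (carrying $\alpha,\hat\alpha$) pass the $\Hils$-legs (carrying $\iota,\hat\iota$). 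The definitions \eqref{eq:U_from_projection}--\eqref{eq:F_from_projection} are arranged so that $(\Corep U,\DuCorep V)$ is the canonical Drinfeld pair of $\G$ realised on $\Hils[L]=\conj{\Hils}\otimes\Hils$, while $\BrMultunit$ records the content of $\multunit[C]$ that remains once the $\ProjBichar$-part is removed; accordingly the verifications run parallel to the forward computation in the proof of Theorem~\ref{the:standard_mult_from_braided_and_standard}, but in reverse.

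First I would treat the two corepresentation conditions. For \eqref{eq:U_corep}, expand $\Corep U=\projbichar_{23}\projbichar_{13}$ in $\U(\conj{\Hils}\otimes\Hils\otimes\Hils\otimes\Hils)$: the two factors share their $\iota$-leg, so pushing them past $\ProjBichar$ uses only that $\projbichar$ is a bicharacter in each of its legs, and the identity collapses to that bicharacter property. The left-corepresentation condition \eqref{eq:V_corep} for $\DuCorep V=\projbichar_{12}\projbichar_{13}$ is symmetric, now involving a $\conj{\Hils}$-leg and hence the anti-Heisenberg relation \eqref{eq:anti-Heisenberg_MultunitA}. For the Drinfeld compatibility \eqref{eq:U_V_compatible}, expand both sides into words of four copies of $\projbichar$ and move $\projbichar$ past $\projbichar$ with the bicharacter identities and past the mixed $\hat\alpha/\iota$ and $\alpha/\hat\iota$ pairs with \eqref{eq:anti-Heisenberg_MultunitA}; both words reach the same normal form. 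Along the way I would also evaluate $Z$ from \eqref{eq:braiding}: substituting the formulas for $\Corep U,\DuCorep V$ into $Z_{13}=\DuCorep V_{23}\Corep U^*_{12}\DuCorep V^*_{23}\Corep U_{12}$ and simplifying identifies $Z$ as a short product of $\projbichar$'s on $\Hils[L]\otimes\Hils[L]$, which also re-proves that $Z$ is well defined.

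Next come the two invariance conditions \eqref{eq:F_U-invariant}, \eqref{eq:F_V-invariant}. The unitary $\BrMultunit=\projbichar_{14}^*\projbichar_{24}^*\multunit[C]_{24}\multunit[C]_{14}$ lives only on the two $\Hils$-legs (carrying $\hat\iota,\iota$) and one $\conj{\Hils}$-leg (carrying $\hat\alpha$). The claim is that $\Corep U\tenscorep\Corep U$ commutes with each of the four factors $\projbichar_{14},\projbichar_{24},\multunit[C]_{14},\multunit[C]_{24}$: moving the two $\projbichar$'s making up each copy of $\Corep U$ across these uses the bicharacter identity for $\projbichar$ together with the relations $\ProjBichar_{23}\Multunit[C]_{12}=\Multunit[C]_{12}\ProjBichar_{13}\ProjBichar_{23}$ and $\Multunit[C]_{23}\ProjBichar_{12}=\ProjBichar_{12}\ProjBichar_{13}\Multunit[C]_{23}$ of Proposition~\ref{pro:qg_projection_mu}, which express exactly that the $\G$-coaction implemented by $\Corep U$ is compatible with $\projbichar$ and with $\multunit[C]$. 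Invariance under $\DuCorep V\tenscorep\DuCorep V$ is the mirror argument, using the anti-Heisenberg relations \eqref{eq:anti-Heisenberg_MultunitC}, \eqref{eq:anti-Heisenberg_MultunitA} on the $\conj{\Hils}$-legs.

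The main obstacle is the top-braided pentagon equation \eqref{eq:top-braided_pentagon}. One expands $\BrMultunit_{23}\BrMultunit_{12}$ and the right-hand side $\BrMultunit_{12}(\Braiding{\Hils[L]}{\Hils[L]})_{23}\BrMultunit_{12}(\Dualbraiding{\Hils[L]}{\Hils[L]})_{23}\BrMultunit_{23}$ on the six-leg space $\conj{\Hils}\otimes\Hils\otimes\conj{\Hils}\otimes\Hils\otimes\conj{\Hils}\otimes\Hils$, where each $\BrMultunit_{ij}$ becomes a product of two $\projbichar^*$'s and two $\multunit[C]$'s and $(\Braiding{\Hils[L]}{\Hils[L]})_{23}=Z_{23}\Flip_{23}$ with $Z$ as computed above, and shows that both sides normal-order to the same word. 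The engine is the ordinary pentagon equation \eqref{eq:pentagon} for $\Multunit[C]=\multunit[C]_{\hat\iota\iota}$, used just as in the proof of Theorem~\ref{the:standard_mult_from_braided_and_standard} but backwards: there $\BrMultunit$ was primitive and $\Multunit[C]$ was assembled from it, whereas here $\Multunit[C]$ is primitive and $\BrMultunit=\projbichar_{14}^*\projbichar_{24}^*\multunit[C]_{24}\multunit[C]_{14}$ disentangles $\Multunit[C]$ from the $\ProjBichar$-part $\Corep U$, so the braided pentagon for $\BrMultunit$ is the pentagon for $\Multunit[C]$ transported through $\projbichar$, the discrepancy between the naive third leg $\BrMultunit_{13}$ and the braided expression $Z_{23}\BrMultunit_{12}Z^*_{23}$ being absorbed precisely by the remaining relations of Proposition~\ref{pro:qg_projection_mu} and the anti-Heisenberg relations. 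I expect this to be a long, purely computational argument that dominates the proof, the other six conditions being comparatively quick.
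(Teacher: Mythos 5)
Your verification of the two corepresentation conditions, the Drinfeld compatibility, and the two invariance conditions follows the same route as the paper: everything reduces to the four pentagon-type relations of Proposition~\ref{pro:qg_projection_mu}, read through the Heisenberg pair \((\iota,\hat\iota)\) on the \(\Hils\)-legs and the anti-Heisenberg pair \((\alpha,\hat\alpha)\) on the \(\conj{\Hils}\)-legs. One imprecision: for \eqref{eq:F_U-invariant} it is \emph{not} true that \(\Corep{U}\tenscorep\Corep{U}=\projbichar_{25}\projbichar_{15}\projbichar_{45}\projbichar_{35}\) commutes with each of the four factors of \(\BrMultunit\) separately; for instance \(\multunit[C]_{14}^*\projbichar_{45}\multunit[C]_{14}=\projbichar_{15}\projbichar_{45}\), so individual factors pick up extra terms, and only after pushing through the full product \(\projbichar_{14}^*\projbichar_{24}^*\multunit[C]_{24}\multunit[C]_{14}\) do these cancel. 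Your subsequent description (move across using the bicharacter identities) is the right mechanism, but the claim of factor-wise commutation should be dropped.

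The genuine divergence is the braided pentagon equation. You propose to verify \eqref{eq:top-braided_pentagon} head-on by expanding both sides on the six-leg space and normal-ordering. The paper deliberately avoids this ``long computation'' by a trick: in the proof of Theorem~\ref{the:standard_mult_from_braided_and_standard} the braided pentagon equation is used exactly once, so, given the other six conditions, it is both sufficient and \emph{necessary} for the ordinary pentagon equation of the semidirect product unitary \(\Multunit[D]\) built from \((\Corep{U},\DuCorep{V},\BrMultunit)\). The paper then proves the ordinary pentagon for \(\Multunit[D]\) indirectly, by exhibiting a Heisenberg pair \((\pi,\hat\pi)\) for \((C,\Comult[C])\) with \(\Multunit[D]=(\hat\pi\otimes\pi)\multunit[C]\) (Lemmas \ref{lemm:aux-C-heis}--\ref{lem:analysis-multunit}); any unitary of that form automatically satisfies the pentagon equation. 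Your direct route is legitimate in principle --- the equation does hold, and the inputs you identify (the pentagon for \(\multunit[C]\), the bicharacter relations, the Heisenberg and anti-Heisenberg relations, and the explicit \(Z=\projbichar_{14}^*\projbichar_{24}^*\projbichar_{13}^*\projbichar_{23}^*\)) are the right ones --- but you have only asserted, not carried out, the normal-ordering; that computation is the hardest step of the whole theorem, and it is exactly what the paper's Heisenberg-pair detour buys you for free, together with the isomorphism statement of Theorem~\ref{the:there_and_back_again}.
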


The proof of Theorem~\ref{the:analysis_qg_projection} will take some
work.  The precise formulas for \(\alpha\) and~\(\hat{\alpha}\) will
only matter in the end when we check the manageability of our braided
multiplicative unitary.  Thus our construction really uses
that~\(\Multunit[C]\) is manageable (or at least modular).  The
pentagon equation~\eqref{eq:pentagon} for~\(\ProjBichar\) holds by
assumption.  Equations \eqref{eq:U_corep} and~\eqref{eq:V_corep},
which say that \(\Corep{U}\) and~\(\DuCorep{V}\) are
corepresentations, amount to
\begin{alignat*}{2}
  \projbichar_{34} \projbichar_{23} \projbichar_{13}
  &= \projbichar_{23} \projbichar_{13} \projbichar_{24} \projbichar_{14}
  \projbichar_{34}
  &\qquad&\text{in }\U(\conj{\Hils}  \otimes \Hils \otimes \Hils\otimes
  \Hils),\\
  \projbichar_{23} \projbichar_{24} \projbichar_{12}
  &= \projbichar_{12} \projbichar_{13} \projbichar_{14} \projbichar_{23}
  \projbichar_{24}
  &\qquad&\text{in }\U(\Hils \otimes \Hils\otimes \conj{\Hils} \otimes
  \Hils).
\end{alignat*}
We get both equations using the pentagon equation for~\(\ProjBichar\)
twice, in legs where the representations of \(\G[H]\)
and hence of~\(\G\)
form a Heisenberg pair.  Thus \(\Corep{U}\)
and~\(\DuCorep{V}\) are corepresentations of~\(\ProjBichar\).  The
Drinfeld compatibility condition~\eqref{eq:U_V_compatible} becomes
\[
\projbichar_{34} \projbichar_{24} \projbichar_{14}
\projbichar_{12} \projbichar_{13}
= \projbichar_{12} \projbichar_{13} \projbichar_{14} \projbichar_{34}
\projbichar_{24}
\qquad\text{in }\U(\Hils\otimes \conj{\Hils}  \otimes \Hils \otimes
\Hils\otimes \Hils).
\]
The anti-Heisenberg and Heisenberg properties of our representations
on the second and third leg give
\(\projbichar_{24} \projbichar_{14} \projbichar_{12} =
\projbichar_{12} \projbichar_{24}\)
and
\(\projbichar_{13} \projbichar_{14} \projbichar_{34} =
\projbichar_{34} \projbichar_{13}\).  Hence
\[
\projbichar_{34} \projbichar_{24} \projbichar_{14}
\projbichar_{12} \projbichar_{13}
= \projbichar_{34} \projbichar_{12} \projbichar_{24} \projbichar_{13}
= \projbichar_{12} \projbichar_{34} \projbichar_{13} \projbichar_{24}
= \projbichar_{12} \projbichar_{13} \projbichar_{14} \projbichar_{34}
\projbichar_{24}
\]
as needed.  Thus \(\Corep{U}\)
and~\(\DuCorep{V}\)
are Drinfeld compatible.  The equivariance of~\(\BrMultunit\)
with respect to~\(\Corep{U}\) in~\eqref{eq:F_U-invariant} amounts to
\begin{equation}
  \label{eq:F_U-invariant_analysis}
  \projbichar_{25} \projbichar_{15} \projbichar_{45} \projbichar_{35}
  \projbichar_{14}^* \projbichar_{24}^* \multunit[C]_{24} \multunit[C]_{14}
  = \projbichar_{14}^* \projbichar_{24}^* \multunit[C]_{24} \multunit[C]_{14}
  \projbichar_{25} \projbichar_{15} \projbichar_{45} \projbichar_{35}
\end{equation}
in \(\U(\conj{\Hils} \otimes \Hils \otimes \conj{\Hils} \otimes
\Hils\otimes \Hils)\).
Since we have a Heisenberg pair on the fourth leg, we may use the
conditions in Proposition~\ref{pro:qg_projection_mu} to simplify
\[
\multunit[C]_{24} \multunit[C]_{14}
\projbichar_{25} \projbichar_{15} \projbichar_{45}
= \multunit[C]_{24} \projbichar_{25} (\multunit[C]_{14} \projbichar_{15}
\projbichar_{45})
= \multunit[C]_{24} \projbichar_{25} \projbichar_{45} \multunit[C]_{14}
= \projbichar_{45} \multunit[C]_{24} \multunit[C]_{14}.
\]
Hence the right side in~\eqref{eq:F_U-invariant_analysis} becomes
\[
\projbichar_{14}^* \projbichar_{24}^* \multunit[C]_{24} \multunit[C]_{14}
\projbichar_{25} \projbichar_{15} \projbichar_{45} \projbichar_{35}
= \projbichar_{14}^* \projbichar_{24}^* \projbichar_{45}
\multunit[C]_{24} \multunit[C]_{14} \projbichar_{35}.
\]
Plugging this in and cancelling
\(\multunit[C]_{24} \multunit[C]_{14} \projbichar_{35}\),
we see that~\eqref{eq:F_U-invariant_analysis} is equivalent to
\[
\projbichar_{25} \projbichar_{15} \projbichar_{45} \projbichar_{14}^*
\projbichar_{24}^*
= \projbichar_{14}^* \projbichar_{24}^* \projbichar_{45}
\quad\text{or}\quad
\projbichar_{24} \projbichar_{25} \projbichar_{14} \projbichar_{15}
\projbichar_{45} = \projbichar_{45} \projbichar_{24} \projbichar_{14}.
\]
Since we have Heisenberg pairs on the second and fourth legs, this
follows by two applications of the pentagon equation
for~\(\projbichar\).

The condition~\eqref{eq:F_V-invariant} about~\(\BrMultunit\)
being equivariant with respect to~\(\DuCorep{V}\) becomes
\begin{equation}
  \label{eq:F_V-invariant_analysis}
  \projbichar_{14} \projbichar_{15} \projbichar_{12} \projbichar_{13}
  \projbichar_{25}^* \projbichar_{35}^* \multunit[C]_{35} \multunit[C]_{25}
  = \projbichar_{25}^* \projbichar_{35}^* \multunit[C]_{35} \multunit[C]_{25}
  \projbichar_{14} \projbichar_{15} \projbichar_{12} \projbichar_{13}
\end{equation}
in
\(\U(\Hils\otimes \conj{\Hils} \otimes \Hils \otimes \conj{\Hils}
\otimes \Hils)\).
Since we have an anti-Heisenberg pair on the second and a Heisenberg
pair on the third leg, we get
\(\multunit[C]_{25} \projbichar_{15} \projbichar_{12} =
\projbichar_{12} \multunit[C]_{25}\)
and
\(\projbichar_{13} \projbichar_{15} \multunit[C]_{35} =
\multunit[C]_{35} \projbichar_{13}\).
Thus the right side of~\eqref{eq:F_V-invariant_analysis} becomes
\begin{align*}
  &\phantom{{}={}}\projbichar_{25}^* \projbichar_{35}^* \multunit[C]_{35}
  \multunit[C]_{25} \projbichar_{14} \projbichar_{15} \projbichar_{12}
  \projbichar_{13}\\
  &= \projbichar_{14} \projbichar_{25}^* \projbichar_{35}^*
  \multunit[C]_{35} (\multunit[C]_{25} \projbichar_{15}
  \projbichar_{12}) \projbichar_{13}
  = \projbichar_{14} \projbichar_{25}^* \projbichar_{35}^*
  \multunit[C]_{35} \projbichar_{12}
  \multunit[C]_{25} \projbichar_{13}
  \\&= \projbichar_{14} \projbichar_{25}^* \projbichar_{35}^* \projbichar_{12}
  (\multunit[C]_{35} \projbichar_{13}) \multunit[C]_{25}
  = \projbichar_{14} \projbichar_{25}^* \projbichar_{35}^* \projbichar_{12}
  \projbichar_{13} \projbichar_{15} \multunit[C]_{35} \multunit[C]_{25}.
\end{align*}
Now we may cancel \(\projbichar_{14}\)
on the left and \(\multunit[C]_{35} \multunit[C]_{25}\)
on the right to transform our condition into
\(\projbichar_{15} \projbichar_{12} \projbichar_{13}
\projbichar_{25}^* \projbichar_{35}^* = \projbichar_{25}^*
\projbichar_{35}^* \projbichar_{12} \projbichar_{13}
\projbichar_{15}\) or, equivalently,
\[
\projbichar_{35} \projbichar_{25} \projbichar_{15} \projbichar_{12} \projbichar_{13}
= \projbichar_{12} \projbichar_{13} \projbichar_{15} \projbichar_{35} \projbichar_{25}
\]
in
\(\U(\Hils\otimes \conj{\Hils} \otimes \Hils \otimes \conj{\Hils}
\otimes \Hils)\).
The anti-Heisenberg pair condition on the second leg gives
\(\projbichar_{25} \projbichar_{15} \projbichar_{12} =
\projbichar_{12} \projbichar_{25}\),
the Heisenberg pair on the third leg gives
\(\projbichar_{13} \projbichar_{15} \projbichar_{35} =
\projbichar_{35} \projbichar_{13}\).
Plugging this in, our condition becomes
\[
\projbichar_{35} \projbichar_{12} \projbichar_{25} \projbichar_{13}
= \projbichar_{12} \projbichar_{35} \projbichar_{13}\projbichar_{25},
\]
which is manifestly true.  Thus our operators
satisfy~\eqref{eq:F_V-invariant} as well.

Checking the braided pentagon equation~\eqref{eq:top-braided_pentagon}
is a long computation.  We may omit it because of the following
trick.  In the proof of
Theorem~\ref{the:standard_mult_from_braided_and_standard}, the braided
pentagon equation is used exactly once.  Therefore, if all the other
conditions in Definition~\ref{def:braided_multiplicative_unitary}
hold, then the braided pentagon
equation~\eqref{eq:top-braided_pentagon} is both sufficient and
\emph{necessary} for the usual pentagon equation for the
unitary~\(\Multunit[D]\)
constructed in
Theorem~\ref{the:standard_mult_from_braided_and_standard}.  Thus the
proof of Theorem~\ref{the:analysis_qg_projection} is finished up to
the braided pentagon equation, and this follows from the proof of the
following theorem.

\begin{theorem}
  \label{the:there_and_back_again}
  Let \(\Multunit[C],\ProjBichar\in\U(\Hils\otimes\Hils)\)
  define a \(\Cst\)\nb-quantum
  group with projection as in
  Proposition~\textup{\ref{pro:qg_projection_mu}}.  Construct a
  braided multiplicative unitary
  \((\ProjBichar,\Corep{U},\DuCorep{V},\BrMultunit)\)
  on the Hilbert space \(\Hils[L]= \conj{\Hils}\otimes\Hils\)
  with the unitaries defined in
  \eqref{eq:U_from_projection}--\eqref{eq:F_from_projection}.  From
  this, construct a multiplicative unitary~\(\Multunit[D]\)
  with a projection~\(\ProjBichar^D\) on the Hilbert space
  \[
  \Hils\otimes\Hils[L]\otimes\Hils\otimes\Hils[L] \cong
  \Hils\otimes\conj{\Hils}\otimes\Hils
  \otimes\Hils\otimes\conj{\Hils} \otimes\Hils
  \]
  by
  Theorem~\textup{\ref{the:standard_mult_from_braided_and_standard}}.

  The braided multiplicative unitary
  \((\ProjBichar,\Corep{U},\DuCorep{V},\BrMultunit)\)
  and the multiplicative unitary~\(\Multunit[D]\)
  are manageable.  And~\(\Multunit[D]\)
  generates the same \(\Cst\)\nb-quantum
  group as~\(\Multunit[C]\).
  The isomorphism between these \(\Cst\)\nb-quantum
  groups maps~\(\ProjBichar\) to~\(\ProjBichar^D\).
\end{theorem}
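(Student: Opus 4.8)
The plan is to establish the three assertions in turn: manageability of the braided multiplicative unitary (whence also of~$\Multunit[D]$), the identification of the $\Cst$\nb-quantum group generated by~$\Multunit[D]$, and the matching of the two projections.

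\emph{Manageability.} Let $Q$ witness the manageability of~$\Multunit[C]$; by Proposition~\ref{pro:ProjBichar_manageable} the same~$Q$ witnesses that of~$\ProjBichar$. On $\Hils[L]=\conj{\Hils}\otimes\Hils$ I would use a strictly positive operator~$Q_{\Hils[L]}$ built from~$Q$ and its transpose, chosen so that the anti-Heisenberg pair $(\alpha,\hat\alpha)$ on~$\conj{\Hils}$ is compatible with~$Q^{\transpose}$ in the way forced by its construction from $(\iota,\hat\iota)$. The commutation conditions \eqref{eq:br_manag_commute_U}--\eqref{eq:br_manag_commute_F} then follow by inspecting \eqref{eq:U_from_projection}--\eqref{eq:F_from_projection}, since $\projbichar$ and $\multunit[C]$ each commute with~$Q\otimes Q$ (Lemma~\ref{lem:V_tilde_commute_Q} and the manageability of~$\Multunit[C]$). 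For the modular condition~\eqref{eq:br_manag} I would substitute \eqref{eq:F_from_projection} together with the known operators $\widetilde{\Multunit[C]}$, $\widetilde{\ProjBichar}$ and the adjoint unitary $\widetilde{\Corep{U}}$ of~\eqref{eq:Corep_U_tilde} into its left-hand side and push the~$Q_{\Hils[L]}$'s through; the resulting identity displays the witness $\widetilde{\BrMultunit}\widetilde{Z}{}^{*}$ required by Definition~\ref{def:braided_manageable}. Once $(\Corep{U},\DuCorep{V},\BrMultunit)$ is manageable, Theorem~\ref{the:mang_mult_from_braided_and_standard}, applied with $\Multunit=\ProjBichar$, gives that~$\Multunit[D]$ is manageable.

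\emph{The quantum group generated by~$\Multunit[D]$.} Using the isomorphism displayed in the statement and the formulas \eqref{eq:U_from_projection}--\eqref{eq:F_from_projection}, the operators produced by Theorem~\ref{the:standard_mult_from_braided_and_standard} unfold into six-legged operators on $\Hils\otimes\conj{\Hils}\otimes\Hils\otimes\Hils\otimes\conj{\Hils}\otimes\Hils$:
\[
\Multunit[D]=\projbichar_{14}\projbichar_{34}\projbichar_{24}\,\projbichar_{46}^{*}\projbichar_{45}^{*}\,\projbichar_{26}^{*}\projbichar_{36}^{*}\,\multunit[C]_{36}\multunit[C]_{26}\,\projbichar_{45}\projbichar_{46},\qquad \ProjBichar^{D}=\projbichar_{14}\projbichar_{34}\projbichar_{24},
\]
where legs $1,3,4,6$ carry the defining Heisenberg pair $(\hat\iota,\iota)$ of~$(C,\Comult[C])$ and legs $2,5$ the anti-Heisenberg pair $(\hat\alpha,\alpha)$. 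The heart of the proof is to collapse the first product, using only the bicharacter identities for $\projbichar$ and $\multunit[C]$ together with the pentagon, Heisenberg and anti-Heisenberg relations already used in the proof of Theorem~\ref{the:analysis_qg_projection} and in Theorem~\ref{the:standard_mult_from_braided_and_standard}, until~$\Multunit[D]$ is recognised --- up to conjugation by a unitary built from the bicharacters~$\projbichar$ --- as the standard multiplicative unitary~$\Multunit[C]$ acting on the two copies of the original space~$\Hils$ sitting at legs~$3$ and~$6$. Since that conjugating unitary intertwines the corresponding Heisenberg pairs and hence implements an isomorphism in the category of quantum group morphisms, $\Multunit[D]$ generates the same $\Cst$\nb-quantum group~$(C,\Comult[C])$. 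Running the same simplification on $\ProjBichar^{D}=\projbichar_{14}\projbichar_{34}\projbichar_{24}$ identifies it, after the same conjugation, with the bicharacter~$\projbichar\in\U(\hat{C}\otimes C)$ defining the given projection on~$\G[H]$; as a quantum group morphism is fixed by its bicharacter independently of the chosen faithful representations, the isomorphism of $\Cst$\nb-quantum groups carries the given projection to~$\ProjBichar^{D}$.

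The hard part will be this collapsing computation: the ten bicharacter factors in~$\Multunit[D]$ must be shepherded through a long sequence of pentagon and (anti-)Heisenberg moves in a carefully chosen order, comparable in length to the computations already carried out in Theorem~\ref{the:standard_mult_from_braided_and_standard}. As a byproduct it also supplies the last missing ingredient of Theorem~\ref{the:analysis_qg_projection}, the braided pentagon equation~\eqref{eq:top-braided_pentagon} for $(\ProjBichar,\Corep{U},\DuCorep{V},\BrMultunit)$: once every other condition of Definition~\ref{def:braided_multiplicative_unitary} holds, that equation is equivalent to the ordinary pentagon equation~\eqref{eq:pentagon} for~$\Multunit[D]$, and the latter is now proved.
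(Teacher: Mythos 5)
Your outline of the manageability part is in the right spirit and matches the paper: one takes $Q_{\Hils[L]}=Q_C^\transpose\otimes Q_C$ on $\conj{\Hils}\otimes\Hils$, checks the commutation conditions from the fact that $\projbichar$ and $\multunit[C]$ commute with $Q_C\otimes Q_C$, and exhibits an explicit $\widetilde{\BrMultunit}$; the paper's equation~\eqref{eq:mang-br-analysis} and Lemma~\ref{lemm:brmanag_analysis} do exactly this. Your closing remark that the braided pentagon equation for $\BrMultunit$ drops out once the ordinary pentagon equation for $\Multunit[D]$ is known is also correct and is the trick the paper uses.

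The gap is in your central step. You propose to show that $\Multunit[D]$ is \emph{unitarily conjugate}, by a unitary built from the bicharacters, to $\Multunit[C]_{36}$, and that this conjugating unitary intertwines the corresponding Heisenberg pairs. That is a strictly stronger statement than what is needed, and you give no reason why such an intertwiner should exist: the representations $\pi$ and $\hat\pi$ that actually arise here (with $\pi'$ satisfying $(\Id\otimes\pi')\multunit[C]=\projbichar_{12}\multunit[C]_{14}$ and $\hat\pi$ built from $(\hat\alpha\otimes\hat\iota)\DuComult[C]$ on legs $2$ and $3$) are not visibly of the form $\Ad(w)\circ\iota_3$ and $\Ad(w)\circ\hat\iota_3$ for a common $w$, and for non-regular quantum groups there is no general principle guaranteeing that two Heisenberg pairs on the same Hilbert space are unitarily equivalent. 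The paper's argument deliberately avoids this: it constructs an auxiliary Heisenberg pair $(\pi,\hat\pi)$ of $(C,\Comult[C])$ on $\Hils\otimes\conj{\Hils}\otimes\Hils$ (Lemmas~\ref{lemm:aux-C-heis} and~\ref{lem:aux-C-Heis}) and verifies the \emph{identity} $\Multunit[D]=(\hat\pi\otimes\pi)\multunit[C]$, $\ProjBichar^D=(\hat\pi\otimes\pi)\projbichar$ (Lemma~\ref{lem:analysis-multunit}). The conclusion then follows from the observation that, for any Heisenberg pair whose associated unitary is manageable, the representations are automatically faithful and the slices of $(\hat\pi\otimes\pi)\multunit[C]$ generate $\pi(C)\cong C$ with the same comultiplication -- the generated $\Cst$\nb-quantum group depends only on the reduced bicharacter, not on the chosen faithful Heisenberg representation, and no unitary intertwiner is required. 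So the target of your ``collapsing computation'' should be the equality $\Multunit[D]=(\hat\pi\otimes\pi)\multunit[C]$ for a Heisenberg pair you construct first, not a unitary conjugacy to $\Multunit[C]_{36}$; as written, the heart of your proof rests on an unestablished and probably unprovable claim.
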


Roughly speaking, going from a \(\Cst\)\nb-quantum
group with projection to a braided \(\Cst\)\nb-quantum
group and back gives an isomorphic \(\Cst\)\nb-quantum
group with projection.

The definitions in
Theorem~\ref{the:standard_mult_from_braided_and_standard} amount to
\begin{align*}
  \Multunit[D] {}\defeq{} &
  \projbichar_{14} \projbichar_{34} \projbichar_{24}
  \projbichar_{46}^* \projbichar_{45}^* \projbichar_{26}^*
  \projbichar_{36}^* \multunit[C]_{36} \multunit[C]_{26}
  \projbichar_{45} \projbichar_{46}\\
  {}={}& \projbichar_{14} \projbichar_{34} \projbichar_{24}
  \projbichar_{46}^* \projbichar_{26}^*
  \projbichar_{36}^* \multunit[C]_{36} \multunit[C]_{26}
  \projbichar_{46},\\
  \ProjBichar^D {}\defeq{}&
  \projbichar_{14} \projbichar_{34} \projbichar_{24}.
\end{align*}

Our first task is to construct representations \(\pi\)
and~\(\hat{\pi}\) of \(C\) and~\(\hat{C}\) that form a Heisenberg pair
and that satisfy \((\hat{\pi}\otimes\pi)\multunit[C]= \Multunit[D]\).
This implies that~\(\Multunit[D]\) satisfies the pentagon equation.
As we remarked above, this implies the braided pentagon
equation~\eqref{eq:top-braided_pentagon} for~\(\BrMultunit\), which
still remained to be proven.

\begin{lemma}
  \label{lemm:aux-C-heis}
  There is a representation
  \(\pi'\colon C\to\Bound(\Hils\otimes\conj{\Hils}\otimes\Hils)\)
  such that
  \begin{alignat}{2}
    \label{eq:pi_on_WC}
    (\Id_{\hat{C}}\otimes\pi')\multunit[C]
    &=\projbichar_{12}\multunit[C]_{14}
    &\qquad\text{in \(\U(\hat{C}\otimes\Comp(\Hils\otimes\conj{\Hils}\otimes\Hils))\),}\\
    \label{eq:pi_on_P}
    (\Id_{\hat{C}}\otimes\pi')\projbichar
    &=\projbichar_{12}\projbichar_{14}
    &\qquad\text{in \(\U(\hat{C}\otimes\Comp(\Hils\otimes\conj{\Hils}\otimes\Hils))\).}
  \end{alignat}
\end{lemma}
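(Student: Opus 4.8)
The plan is to build $\pi'$ from the candidate unitary on the right‐hand side of~\eqref{eq:pi_on_WC}, using the standard correspondence, for manageable multiplicative unitaries, between nondegenerate representations of $C$ on a Hilbert space~$\mathcal K$ and unitaries $X\in\U(\hat C\otimes\Comp(\mathcal K))$ that are corepresentations of~$\multunit[C]$ (see \cites{Woronowicz:Mult_unit_to_Qgrp}{Soltan-Woronowicz:Multiplicative_unitaries}{Meyer-Roy-Woronowicz:Homomorphisms}); under this bijection $X=(\Id_{\hat C}\otimes\pi')\multunit[C]$, and $\pi'$ is recovered by slicing $X$ over its first leg, so that \eqref{eq:pi_on_WC} holds by construction once $\pi'$ has been produced. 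I take $\mathcal K=\Hils\otimes\conj{\Hils}\otimes\Hils$, label its three tensor factors $2,3,4$, place $\hat C$ on an auxiliary leg~$1$, and set $X\defeq\projbichar_{12}\multunit[C]_{14}$. Since leg~$3$ never occurs, $\pi'$ will in fact factor through $\Bound(\Hils\otimes\Hils)$; the redundant conjugate factor is carried along only so that $\pi'$ afterwards lives on the same Hilbert space as a companion anti‐representation.

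The first step is to verify the corepresentation identity for~$X$, which in a suitable convention equates $(\DuComult[C]\otimes\Id)X$ with the product of the two copies of $X$ on the two $\hat C$‐legs, the left‐hand side being computed by applying the algebra homomorphism $\DuComult[C]$ to the $\hat C$‐leg. Now $\projbichar$ is a bicharacter of~$\G[H]$ — this is precisely the content of the second and third relations in Proposition~\ref{pro:qg_projection_mu}, by the discussion around~\eqref{eq:intro_pentagonal} — and $\multunit[C]$ is a bicharacter, being the reduced bicharacter of~$\G[H]$. Hence, after inserting the new leg, $(\DuComult[C]\otimes\Id)X$ becomes $\projbichar_{23}\projbichar_{13}\multunit[C]_{25}\multunit[C]_{15}$, whereas the product of the two copies of~$X$ is $\projbichar_{23}\multunit[C]_{25}\projbichar_{13}\multunit[C]_{15}$; the two coincide because $\projbichar_{13}$ and $\multunit[C]_{25}$ act on disjoint legs and therefore commute. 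This produces the nondegenerate representation $\pi'$ with~\eqref{eq:pi_on_WC}.

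It remains to check~\eqref{eq:pi_on_P}. Here I use that $\projbichar\in\U(\hat A\otimes A)\subseteq\U(\Mult(\hat C\otimes C))$ is the reduced bicharacter of the closed quantum subgroup $\G\subseteq\G[H]$, that $\pi'$ restricts to a representation of~$A$ on~$\mathcal K$, and the relation $\projbichar_{23}\multunit[C]_{12}=\multunit[C]_{12}\projbichar_{13}\projbichar_{23}$ of Proposition~\ref{pro:qg_projection_mu}, which — read on a leg carrying the Heisenberg pair $(\iota,\hat\iota)$ — exhibits the $C$‐leg of $\projbichar$ inside that of $\multunit[C]$. Applying $\pi'$ to the $C$‐leg of this relation, substituting~\eqref{eq:pi_on_WC}, and simplifying with the four pentagon‐like equations of Proposition~\ref{pro:qg_projection_mu} and the Heisenberg property collapses $(\Id_{\hat C}\otimes\pi')\projbichar$ to $\projbichar_{12}\projbichar_{14}$; comparing legs finishes the proof. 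Equivalently, one may observe that $\projbichar$ is idempotent as a morphism, so that both sides of~\eqref{eq:pi_on_P} are the reduced bicharacter of one and the same morphism.

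I expect the genuine work to be confined to the last step. The first identity is essentially built into the construction of~$\pi'$, and the corepresentation check is a one‐line manipulation with disjoint legs. But~\eqref{eq:pi_on_P} concerns the values of $\pi'$ on the multiplier subalgebra $A\subseteq\Mult(C)$, so one must be careful about how $\projbichar$ sits inside $\multunit[C]$ and about using the Heisenberg pair $(\iota,\hat\iota)$ to give meaning to the identities involving $\DuComult[C]$ and the relations of Proposition~\ref{pro:qg_projection_mu}; the computation itself is a small‐scale analogue of the leg‐numbering manipulations in the proof of Theorem~\ref{the:standard_mult_from_braided_and_standard}.
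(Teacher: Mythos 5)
Your construction of \(\pi'\) rests on a correspondence that does not hold in the form you invoke it. Verifying that \(X\defeq\projbichar_{12}\multunit[C]_{14}\) satisfies the corepresentation identity \((\DuComult[C]\otimes\Id)X=X_{2\cdot}X_{1\cdot}\) (your computation of this is fine) only yields, by the universal property, a representation of the \emph{universal} algebra \(C^\univ\) on \(\Hils\otimes\conj{\Hils}\otimes\Hils\); corepresentations of \(\hat{\G[H]}\) in the \(\hat{C}\)\nb-leg are in bijection with representations of \(C^\univ\), not of the reduced \(C\), and "slicing the first leg" does not produce a map defined on \(C\). The two notions genuinely differ: for non-coamenable \(\G[H]\) the trivial corepresentation \(X=1\) corresponds to the counit, which is unbounded on \(C\); and, closer to your situation, the single factor \(\projbichar_{12}\) is itself a corepresentation of \(\hat{\G[H]}\) whose associated representation of \(C^\univ\) is \(\hat{j}^\univ\) followed by \(A^\univ\to A\subseteq\Bound(\Hils)\), which usually does \emph{not} factor through \(C\) -- the paper states explicitly that the morphism \(\G[H]\to\G\) is in general not representable by a bialgebra morphism \(C\to A\). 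That the \emph{product} \(\projbichar_{12}\multunit[C]_{14}\) nevertheless comes from a representation of the reduced \(C\) is precisely the nontrivial content of the lemma, so it cannot be obtained from the bijection you cite. The paper closes exactly this gap by writing \(\pi'\) down on the reduced level from the start: it sets \(\varphi(c)\defeq\Duprojbichar_{1\hat{\alpha}}(1\otimes\alpha(c))\Duprojbichar{}_{1\hat{\alpha}}^*\), a manifestly well-defined morphism \(C\to\Mult(C\otimes\Comp(\conj{\Hils}))\) obtained by conjugating the anti-Heisenberg representation \(\alpha\), and then computes \((\Id_{\hat{C}}\otimes\varphi)\multunit[C]=\projbichar_{12}\multunit[C]_{1\alpha}\) using the anti-Heisenberg form of the second relation in Proposition~\ref{pro:qg_projection_mu}.

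Your treatment of~\eqref{eq:pi_on_P} inherits the same gap and is in any case only a sketch; note also that your fallback argument ("both sides are the reduced bicharacter of one and the same morphism") does not typecheck, because the second leg of~\eqref{eq:pi_on_P} is a representation on a Hilbert space rather than a quantum group, so neither side is a bicharacter between quantum groups. Once \(\pi'\) is constructed explicitly as above, \eqref{eq:pi_on_P} is immediate: one repeats the same conjugation computation with \(\projbichar\) in place of \(\multunit[C]\), using the pentagon equation for \(\projbichar\) (the fourth relation in Proposition~\ref{pro:qg_projection_mu}) in its anti-Heisenberg form~\eqref{eq:anti-Heisenberg_MultunitA}.
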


\begin{proof}
  Let
  \(\Duprojbichar = \sigma(\projbichar)^* \in \U(C\otimes \hat{C})\)
  for the flip~\(\sigma\).
  Define \(\varphi\colon C\to\Mult(C\otimes\Comp(\conj{\Hils}))\)
  by
  \(\varphi(c)\defeq
  \Duprojbichar_{1\hat{\alpha}} (1\otimes\alpha(c))
  \Duprojbichar{}_{1\hat{\alpha}}^*\).
  Then
  \[
  (\Id_{\hat{C}}\otimes\varphi) \multunit[C]
  =\Duprojbichar_{2\hat{\alpha}}\multunit[C]_{1\alpha}
  \Duprojbichar{}_{2\hat{\alpha}}^*
  =\flip_{23}(\projbichar_{\hat{\alpha}3}^* \multunit[C]_{1\alpha}
  \projbichar_{\hat{\alpha}3})
  \qquad\text{in }\Mult(\hat{C}\otimes C\otimes\Comp(\conj{\Hils})).
  \]
  The second condition in Proposition~\ref{pro:qg_projection_mu} is
  equivalent to
  \(\multunit[C]_{1\alpha}\projbichar_{\hat{\alpha}3} =
  \projbichar_{\hat{\alpha}3} \projbichar_{13}\multunit[C]_{1
    \alpha}\)
  in \(\U(\hat{C}\otimes\Comp(\conj{\Hils})\otimes C)\)
  because we have an anti\nb-Heisenberg pair on the second leg.  Thus
  \[
  (\Id_{\hat{C}}\otimes\varphi)\multunit[C]
  = \flip_{23}(\projbichar_{13}\multunit[C]_{1\alpha})
  =\projbichar_{12}\multunit[C]_{1\alpha}
  \qquad\text{in }\Mult(\hat{C}\otimes C\otimes\Comp(\conj{\Hils})).
  \]
  We may define \(\pi'(c)\defeq
  \bigl((\hat{\iota}\otimes\iota\circ\alpha^{-1})\varphi(c)\bigr)_{13}\)
  because~\(\alpha\) is automatically injective (see
  \cite{Roy:Codoubles}*{Proposition 3.7}).  This is the unique
  representation that satisfies~\eqref{eq:pi_on_WC}.
  Replacing~\(\multunit[C]\) by~\(\projbichar\) in the above
  computations gives~\eqref{eq:pi_on_P}.
\end{proof}

\begin{lemma}
  \label{lem:aux-C-Heis}
  Let~\(\pi'\)
  be as in the previous lemma.  The pair of representations
  \((\pi,\hat{\pi})\)
  of \(C\)
  and~\(\hat{C}\)
  on \(\Hils\otimes\conj{\Hils}\otimes\Hils\) defined by
  \[
  \pi(c)\defeq \projbichar_{13}^*\pi'(c)\projbichar_{13},
  \qquad
  \hat{\pi}(\hat{c})\defeq \projbichar_{13}^*
  ((\hat{\alpha}\otimes\hat{\iota}) \DuComult[C](\hat{c}))_{23}
  \projbichar_{13}
  \]
  is an \(\G[H]\)\nb-Heisenberg pair.
\end{lemma}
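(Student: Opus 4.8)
The plan is to verify the one commutation relation that, together with nondegeneracy, makes $(\pi,\hat\pi)$ a Heisenberg pair for $\G[H]=(C,\Comult[C])$: with $K\defeq\Hils\otimes\conj\Hils\otimes\Hils$, we must check
\[
  \multunit[C]_{\hat\pi 3}\,\multunit[C]_{1\pi}
  = \multunit[C]_{1\pi}\,\multunit[C]_{13}\,\multunit[C]_{\hat\pi 3}
  \qquad\text{in }\U(\hat C\otimes\Comp(K)\otimes C),
\]
where $\multunit[C]_{1\pi}=(\Id_{\hat C}\otimes\pi)(\multunit[C])$ and $\multunit[C]_{\hat\pi 3}=(\hat\pi\otimes\Id_C)(\multunit[C])$; this is the counterpart of the anti\nb-Heisenberg relations \eqref{eq:anti-Heisenberg_MultunitC}--\eqref{eq:anti-Heisenberg_MultunitA}. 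Nondegeneracy is immediate: $\pi'$ is a representation of~$C$ by Lemma~\ref{lemm:aux-C-heis}, $(\hat\alpha\otimes\hat\iota)\DuComult[C]$ is a composite of nondegenerate morphisms, and conjugation by the unitary $\projbichar_{13}$ preserves nondegeneracy.

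First I would note that conjugating \emph{both} members of a pair of representations by one and the same unitary on~$K$ leaves the displayed relation intact, because the middle factor $\multunit[C]_{13}$ acts trivially on the $\Comp(K)$ leg and so commutes with the conjugating unitary there. Hence it is enough to show that $(\pi',\rho)$, with $\rho(\hat c)\defeq\bigl((\hat\alpha\otimes\hat\iota)\DuComult[C](\hat c)\bigr)_{23}$, is a Heisenberg pair for~$\G[H]$. For this I would compute the two half-slices of~$\multunit[C]$. By~\eqref{eq:pi_on_WC} one has $(\Id_{\hat C}\otimes\pi')(\multunit[C])=\projbichar_{12}\multunit[C]_{14}$, with $\projbichar$ on the first and $\multunit[C]$ on the last copy of~$\Hils$ in~$K$, so this slice ignores the middle ($\conj\Hils$) leg; and, using the bicharacter identity for $\multunit[C]$ that expresses $(\DuComult[C]\otimes\Id_C)(\multunit[C])$ as a product of two copies of~$\multunit[C]$, the slice $(\rho\otimes\Id_C)(\multunit[C])$ splits as a product of two half-represented copies of~$\multunit[C]$, one through~$\hat\iota$ on the last copy of~$\Hils$ in~$K$ and one through~$\hat\alpha$ on the $\conj\Hils$ leg. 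Substituting these into the Heisenberg relation for $(\pi',\rho)$ and sliding factors past each other, all factors supported on disjoint legs commute outright; the only substantial step is the commutation of the $\hat\iota$-slice of $(\rho\otimes\Id_C)(\multunit[C])$ past the $\multunit[C]_{14}$-factor of $(\Id_{\hat C}\otimes\pi')(\multunit[C])$, both acting on the last copy of~$\Hils$ in~$K$, where $(\iota,\hat\iota)$ is a Heisenberg pair for~$\G[H]$; its Heisenberg relation produces precisely the missing $\multunit[C]_{13}$-factor on the right-hand side, and the two sides then coincide.

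The main obstacle is purely organisational: $K$ already carries four representations ($\iota,\hat\iota$ on the two copies of~$\Hils$ and $\alpha,\hat\alpha$ on~$\conj\Hils$), so the five tensor factors of $\U(\hat C\otimes\Comp(K)\otimes C)$ carry six operator legs, and at each step one must keep straight which leg hosts a Heisenberg relation, which an anti\nb-Heisenberg relation for $(\alpha,\hat\alpha)$ (needed only if the orientation of the bicharacter identity forces it; the explicit form of $\alpha$ and~$\hat\alpha$ is irrelevant here), and which merely hosts a commutation of factors on disjoint legs. A parallel computation then identifies $(\hat\pi\otimes\pi)(\multunit[C])$ with the unitary $\Multunit[D]$ written above; together with the Heisenberg property this shows that $\Multunit[D]$ satisfies the pentagon equation, which is the piece still missing from the proof of Theorem~\ref{the:analysis_qg_projection}.
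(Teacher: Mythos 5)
Your proposal is correct and follows essentially the same route as the paper: reduce to the pair $(\pi',\hat\pi')$ by noting that conjugation by $\projbichar_{13}$ leaves the Heisenberg relation invariant, expand the two slices via Lemma~\ref{lemm:aux-C-heis} and $(\DuComult[C]\otimes\Id_C)\multunit[C]=\multunit[C]_{23}\multunit[C]_{13}$, commute factors on disjoint legs, and apply the Heisenberg relation of $(\iota,\hat\iota)$ on the last copy of~$\Hils$ to produce the middle factor $\multunit[C]_{15}$. Your observation that the anti\nb-Heisenberg property of $(\alpha,\hat\alpha)$ is not actually needed here is also consistent with the paper's computation.
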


\begin{proof}
  Let
  \(\hat{\pi}'(\hat{c})\defeq ((\hat{\alpha}\otimes\hat{\iota})
  \DuComult[C](\hat{c}))_{23}\).
  The lemma is equivalent to \((\pi',\hat{\pi}')\)
  being \(\G[H]\)\nb-Heisenberg.
  Recall that
  \((\DuComult[C]\otimes\Id_{C})\multunit[C] =
  \multunit[C]_{23}\multunit[C]_{13}\).
  Lemma~\ref{lemm:aux-C-heis} gives
  \[
  \multunit[C]_{\hat{\pi}'5}\multunit[C]_{1\pi'}
  =\multunit[C]_{45}\multunit[C]_{35}\projbichar_{12}\multunit[C]_{14}
  =\projbichar_{12}\multunit[C]_{45}\multunit[C]_{14}\multunit[C]_{35}
  \]
  in
  \(\U(\hat{C}\otimes
  \Comp(\Hils\otimes\conj{\Hils}\otimes\Hils)\otimes C)\).
  Since we have a Heisenberg pair on the fourth leg, the pentagon
  equation~\eqref{eq:pentagon} gives
  \[
  \projbichar_{12} \multunit[C]_{45} \multunit[C]_{14} \multunit[C]_{35}
  = \projbichar_{12} \multunit[C]_{14} \multunit[C]_{15} \multunit[C]_{45}
  \multunit[C]_{35},
  \]
  which is equivalent to
  \(\multunit[C]_{\hat{\pi}'5}\multunit[C]_{1\pi'} =
  \multunit[C]_{1\pi'}\multunit[C]_{15}\multunit[C]_{\hat{\pi}'5}\)
  in
  \(\U(\hat{C}\otimes\Comp(\Hils\otimes\conj{\Hils}\otimes\Hils)\otimes
  C)\).
\end{proof}

\begin{lemma}
  \label{lem:analysis-multunit}
  \(\Multunit[D] = (\hat{\pi}\otimes\pi)\multunit[C]\)
  and \(\ProjBichar^D = (\hat{\pi}\otimes\pi)\projbichar\)
  in \(\U(\Hils\otimes\Hils[L]\otimes\Hils\otimes \Hils[L])\).
\end{lemma}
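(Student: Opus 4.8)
The plan is to prove both equalities by substituting the formulas for \(\pi\) and \(\hat{\pi}\) from Lemma~\ref{lem:aux-C-Heis} and reducing the resulting expressions, with the help of the coproduct identities \((\DuComult[C]\otimes\Id)\multunit[C]=\multunit[C]_{23}\multunit[C]_{13}\) and \((\DuComult[C]\otimes\Id)\projbichar=\projbichar_{23}\projbichar_{13}\) and of Lemma~\ref{lemm:aux-C-heis}, to a pure rearrangement of copies of \(\projbichar\) and \(\multunit[C]\) under the pentagon and Heisenberg relations.

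Write \(\pi'\) for the representation of Lemma~\ref{lemm:aux-C-heis} and \(\hat{\pi}'\defeq(\hat{\alpha}\otimes\hat{\iota})\circ\DuComult[C]\), so that \(\pi(c)=\projbichar_{13}^*\pi'(c)\projbichar_{13}\) and \(\hat{\pi}(\hat{c})=\projbichar_{13}^*\hat{\pi}'(\hat{c})\projbichar_{13}\) on \(\Hils\otimes\conj{\Hils}\otimes\Hils\). Numbering the six tensor legs of \(\Hils\otimes\conj{\Hils}\otimes\Hils\otimes\Hils\otimes\conj{\Hils}\otimes\Hils\) so that \(\hat{\pi}\) is represented on legs \(1,2,3\) and \(\pi\) on legs \(4,5,6\), the two conjugating copies of \(\projbichar\) become \(\projbichar_{13}\) and \(\projbichar_{46}\); these act on disjoint legs, so for \(X\in\{\multunit[C],\projbichar\}\) one has
\[
(\hat{\pi}\otimes\pi)X=\projbichar_{13}^*\,\projbichar_{46}^*\,\bigl((\hat{\pi}'\otimes\pi')X\bigr)\,\projbichar_{46}\,\projbichar_{13}.
\]
Evaluating \((\hat{\pi}'\otimes\pi')X\) is straightforward: apply \(\pi'\) to the \(C\)-leg by Lemma~\ref{lemm:aux-C-heis}, then apply \(\hat{\pi}'\) to the \(\hat{C}\)-leg, splitting each factor via the coproduct identities so that the two resulting \(\hat{C}\)-legs sit on leg \(3\) through \(\hat{\iota}\) and on leg \(2\) through \(\hat{\alpha}\); since the parts on the disjoint leg pairs \(\{3,6\}\) and \(\{2,4\}\) commute, this gives
\[
(\hat{\pi}'\otimes\pi')\multunit[C]=\projbichar_{34}\projbichar_{24}\multunit[C]_{36}\multunit[C]_{26},
\qquad
(\hat{\pi}'\otimes\pi')\projbichar=\projbichar_{34}\projbichar_{24}\projbichar_{36}\projbichar_{26}.
\]

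It then remains to check that conjugating these by \(\projbichar_{46}\projbichar_{13}\) recovers exactly
\[
\Multunit[D]=\projbichar_{14}\projbichar_{34}\projbichar_{24}\projbichar_{46}^*\projbichar_{26}^*\projbichar_{36}^*\multunit[C]_{36}\multunit[C]_{26}\projbichar_{46},
\qquad
\ProjBichar^D=\projbichar_{14}\projbichar_{34}\projbichar_{24}.
\]
Here one commutes \(\projbichar_{13}\) and \(\projbichar_{46}\) through the product, using the pentagon equation for \(\projbichar\) and the fact that \((\iota,\hat{\iota})\) is a Heisenberg and \((\alpha,\hat{\alpha})\) an anti-Heisenberg pair on the relevant Hilbert spaces (so that copies of \(\projbichar\) meeting on a shared \(\Hils\)-leg recombine with the appropriate correction term), together with the cross-relations of Proposition~\ref{pro:qg_projection_mu} wherever a copy of \(\projbichar\) meets a copy of \(\multunit[C]\) on a common leg; the leading factor \(\projbichar_{14}\) arises as the correction produced when the conjugating factors are dragged across the factors carrying a \(C\)-leg on leg \(6\).

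I expect this last rearrangement to be the main obstacle. It involves no new idea, but it is a lengthy bookkeeping computation, and one must choose the order in which the conjugating factors \(\projbichar_{13}\) and \(\projbichar_{46}\) are eliminated so that the correction terms telescope to the claimed closed form rather than multiply without end; the leg-numbering conventions fixed at the start of Section~\ref{sec:Qnt_grp_with_proj} make this manageable.
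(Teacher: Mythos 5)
Your proposal follows the paper's proof essentially verbatim: the paper likewise writes \((\hat{\pi}\otimes\pi)X\) as the conjugate of \((\hat{\pi}'\otimes\pi')X=\projbichar_{34}\projbichar_{24}X_{36}X_{26}\) by \(\projbichar_{46}\projbichar_{13}\) (via Lemma~\ref{lemm:aux-C-heis} and \((\DuComult[C]\otimes\Id)X=X_{23}X_{13}\)) and then carries out exactly the rearrangement you describe, using the cross-relation \(\multunit[C]_{36}\projbichar_{13}=\projbichar_{13}\projbichar_{16}\multunit[C]_{36}\) and the Heisenberg pentagon relations \(\projbichar_{34}\projbichar_{13}=\projbichar_{13}\projbichar_{14}\projbichar_{34}\) and \(\projbichar_{46}\projbichar_{i4}=\projbichar_{i4}\projbichar_{i6}\projbichar_{46}\). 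The bookkeeping does telescope as you expect; the only imprecision is that the surviving prefix \(\projbichar_{14}\) actually arises from the pentagon relation on the shared leg~\(3\) (i.e.\ from \(\projbichar_{13}^*\projbichar_{34}\projbichar_{13}=\projbichar_{14}\projbichar_{34}\)), while the leg-\(6\) correction \(\projbichar_{16}\) is absorbed by \(\projbichar_{46}^*\).
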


\begin{proof}
  Cancelling~\(\projbichar_{45}\)
  gives
  \(\Multunit[D]= \projbichar_{14} \projbichar_{34} \projbichar_{24}
  \projbichar_{46}^* \projbichar_{26}^* \projbichar_{36}^*
  \multunit[C]_{36} \multunit[C]_{26} \projbichar_{46}\).
  Computing as in the proof of Lemma~\ref{lem:aux-C-Heis}, we get
  \[
  (\hat{\pi}\otimes\pi)\multunit[C]
  = \projbichar_{13}^* \projbichar_{46}^* \projbichar_{34}
  \multunit[C]_{36} \projbichar_{24} \multunit[C]_{26}
  \projbichar_{13}\projbichar_{46}
  = \projbichar_{46}^* \projbichar_{13}^* \projbichar_{34}
  \projbichar_{24} \multunit[C]_{36} \projbichar_{13}
  \multunit[C]_{26} \projbichar_{46}.
  \]
  Since we have a Heisenberg pair on the third leg,
  \(\multunit[C]_{36} \projbichar_{13} = \projbichar_{13}
  \projbichar_{16} \multunit[C]_{36}\).  Hence
  \[
  (\hat{\pi}\otimes\pi)\multunit[C]
  % = \projbichar_{46}^* \projbichar_{13}^* \projbichar_{34}
  % \projbichar_{24} \multunit[C]_{36} \projbichar_{13}
  % \multunit[C]_{26} \projbichar_{46}
  = \projbichar_{46}^* \projbichar_{13}^* \projbichar_{34}
  \projbichar_{24} \projbichar_{13} \multunit[C]_{36} \multunit[C]_{26}
  \projbichar_{46}.
  \]
  Since we have Heisenberg pairs on the third and fourth legs,
  \(\projbichar_{34}\projbichar_{13} =
  \projbichar_{13}\projbichar_{14}\projbichar_{34}\)
  and
  \(\projbichar_{46}\projbichar_{i4} = \projbichar_{i 6}\projbichar_{i
    6} \projbichar_{4 6}\)
  for all \(i=1,2,3\).
  Using these identities (for the expressions within brackets in the
  computation below) we get
  \begin{multline*}
    \projbichar_{46}^* \projbichar_{13}^* \projbichar_{34}
    \projbichar_{24} \projbichar_{13} \projbichar_{16}
    = \projbichar_{46}^* (\projbichar_{13}^* \projbichar_{34}
    \projbichar_{13}) \projbichar_{24}  \projbichar_{16}
    = \projbichar_{46}^* \projbichar_{14} \projbichar_{34}
    \projbichar_{24}  \projbichar_{16} \\
    = (\projbichar_{46}^* \projbichar_{14} \projbichar_{16})
    \projbichar_{34} \projbichar_{24}
    = \projbichar_{14} (\projbichar_{46}^* \projbichar_{34})
    \projbichar_{24}
    = \projbichar_{14} \projbichar_{34} \projbichar_{46}^*
    \projbichar_{36}^* \projbichar_{24}\\
    = \projbichar_{14} \projbichar_{34} (\projbichar_{46}^*
    \projbichar_{24}) \projbichar_{36}^*
    = \projbichar_{14} \projbichar_{34} \projbichar_{24}
    \projbichar_{46}^* \projbichar_{26}^* \projbichar_{36}^* .
  \end{multline*}
  The last two computations together give
  \[
  (\hat{\pi}\otimes\pi)\multunit[C]
  =\projbichar_{14} \projbichar_{34} \projbichar_{24}
  \projbichar_{46}^* \projbichar_{26}^* \projbichar_{36}^*
  \multunit[C]_{36}\multunit[C]_{26}\projbichar_{46}
  =\Multunit[D].
  \]
  Equation~\eqref{eq:pi_on_P} allows a similar computation
  with~\(\projbichar\) instead of~\(\multunit[C]\).  This gives
  \[
  (\hat{\pi}\otimes\pi)\projbichar
  =\projbichar_{14} \projbichar_{34} \projbichar_{24}
  \projbichar_{46}^* \projbichar_{26}^* \projbichar_{36}^*
  \projbichar_{36}\projbichar_{26}\projbichar_{46}
  =\ProjBichar^D.\qedhere
  \]
\end{proof}

The following remarks apply to any Heisenberg pair \((\pi,\hat{\pi})\)
for a \(\Cst\)\nb-quantum
group \(\G[H]= (C,\Comult[C])\)
on a Hilbert space~\(\Hils'\).
Being a Heisenberg pair means that
\((\hat{\pi}\otimes\pi)\multunit[C]\)
is a multiplicative unitary.  It is unclear, in general, whether this
multiplicative unitary is manageable.  If it is manageable, then we
claim that the \(\Cst\)\nb-quantum
group that it generates is isomorphic to the one we started with.  The
representations in a Heisenberg pair are automatically faithful by
\cite{Roy:Codoubles}*{Proposition 3.7}.  Hence we may view \(C\)
and~\(\hat{C}\)
as subalgebras of \(\Bound(\Hils')\),
and \((\hat{\pi}\otimes\pi)\multunit[C]\)
is a unitary multiplier of
\(\hat{C}\otimes C \subseteq \Bound(\Hils'\otimes\Hils')\).
It makes no difference whether we take slices on the first leg with
elements of \(\Bound(\Hils')_*\)
or~\(\hat{C}^*\):
both generate the same \(\Cst\)\nb-subalgebra
of \(\Bound(\Hils')\),
namely, \(\pi(C)\).
The comultiplication on the quantum group generated
by~\((\hat{\pi}\otimes\pi)\multunit[C]\)
is defined so that the isomorphism~\(\pi\)
is a Hopf \Star{}homomorphism.
Thus the \(\Cst\)\nb-quantum group generated
by~\((\hat{\pi}\otimes\pi)\multunit[C]\) is isomorphic to~\(\G[H]\)
for any Heisenberg pair for
which~\((\hat{\pi}\otimes\pi)\multunit[C]\) is manageable.
Furthermore, Lemma~\ref{lem:analysis-multunit}
shows that this Hopf \Star{}isomorphism maps~\(\projbichar\)
to~\(\ProjBichar^D\),
so we also get the same projection on our \(\Cst\)\nb-quantum group.

Thus the proof of Theorem~\ref{the:there_and_back_again} will be
finished once we show that~\(\Multunit[D]\)
and the braided multiplicative unitary
\((\Corep{U},\DuCorep{V},\BrMultunit)\)
are manageable.  By
Theorem~\ref{the:mang_mult_from_braided_and_standard},
\(\Multunit[D]\)
is manageable once \((\Corep{U},\DuCorep{V},\BrMultunit)\)
is manageable.  So it remains to prove this.

The braiding on \(\Hils[L]\otimes\Hils[L]\)
comes from the unique unitary~\(Z\)
that verifies~\eqref{eq:braiding}.  A simple computation shows that
\(Z = \projbichar_{14}^* \projbichar_{24}^* \projbichar_{13}^*
\projbichar_{23}^*\)
in \(\U(\conj{\Hils}\otimes\Hils\otimes\conj{\Hils}\otimes\Hils)\)
does the job.  This gives
\[
Z^*\BrMultunit=\projbichar_{23}\projbichar_{13}\projbichar_{24}\projbichar_{14}
\projbichar_{14}^* \projbichar_{24}^* \multunit[C]_{24}\multunit[C]_{14}
=  \projbichar_{23}\projbichar_{13} \multunit[C]_{24}\multunit[C]_{14}.
\]
Now we use that \((\iota,\hat{\iota})\)
is the standard Heisenberg pair, generated by~\(\Multunit[C]\),
and that the anti-Heisenberg pair \((\alpha,\hat{\alpha})\)
is constructed as in \cite{Meyer-Roy-Woronowicz:Twisted_tensor}*{Lemma
  3.6}; that is,
\(\hat{\alpha}(\hat{a}) \defeq
\hat{a}^{\transpose\circ\hat{\iota}\circ\hat{R}_C}\)
and \(\alpha(a)\defeq a^{\transpose\circ\iota\circ R_C}\).  Thus
\[
Z^*\BrMultunit =
\projbichar_{23}^{\hat{\iota}\otimes\transpose\circ\iota\circ R_{C}}
\projbichar_{13}^{\transpose\circ\hat{\iota}\circ\hat{R}_{C}\otimes\transpose\circ\iota\circ R_{C}}
(\multunit[C]_{24})^{\hat{\iota}\otimes\iota}
(\multunit[C]_{14})^{\transpose\circ\hat{\iota}\circ\hat{R}_{C}\otimes\iota}.
\]

Let \(Q_C\)
and \(\widetilde{\Multunit[C]} \in \U(\conj{\Hils}\otimes\Hils)\)
witness the manageability of
\(\Multunit[C] = (\hat{\iota}\otimes\iota)
\multunit[C]\in\U(\Hils\otimes\Hils)\),
see Appendix~\ref{app_sec:mang_bichar}.  Since~\(\ProjBichar\)
is manageable by Proposition~\ref{pro:ProjBichar_manageable}, so is
the dual \(\DuProjBichar = \Sigma \ProjBichar^* \Sigma\).
This is witnessed by a certain unitary
\(\widetilde{\DuProjBichar} \in \U(\conj{\Hils}\otimes\Hils)\).
We have
\((\multunit[C])^{\transpose \hat\iota R_{\hat{C}} \otimes \iota} =
(\multunit[C])^{\transpose \hat\iota \otimes \iota R_C} =
(\widetilde{\Multunit}{}^C)^*\)
and
\(\projbichar^{\transpose \hat\iota R_{\hat{C}} \otimes \transpose
  \iota R_C} = \projbichar^{\transpose \otimes \transpose}\)
by \cite{Woronowicz:Mult_unit_to_Qgrp}*{Theorem 1.6 (5)}
and~\cite{Meyer-Roy-Woronowicz:Homomorphisms}*{(19)}.  Similarly,
\((\Sigma \projbichar^{\hat{\iota}\otimes\transpose\circ\iota\circ
  R_{C}} \Sigma)^* = \Duprojbichar{}^{\transpose\circ\iota\circ
  R_C\otimes \hat{\iota}} = \widetilde{\DuProjBichar}\).  Thus
\[
Z^*\BrMultunit = \Flip_{23}\widetilde{\DuProjBichar}_{23}\Flip_{23}
\ProjBichar^{\transpose\otimes\transpose}_{13} \Multunit[C]_{24}
(\widetilde{\Multunit}{}^{C}_{14})^*
\qquad\text{in }\U(\conj{\Hils}\otimes\Hils\otimes\conj{\Hils}\otimes\Hils).
\]
Let \(Q\defeq Q_C^\transpose\otimes Q_C\).
Then \(Q\otimes Q\)
commutes with~\(\BrMultunit\),
\(Q\otimes Q_C\)
commutes with~\(\Corep{U}\),
and \(Q_C\otimes Q\)
commutes with~\(\DuCorep{V}\).
Define
\(\widetilde{\BrMultunit} \in \U(\conj{\Hils[L]}\otimes\Hils[L])\) by
\begin{equation}
  \label{eq:mang-br-analysis}
  \widetilde{\BrMultunit} \defeq
  \widetilde{\Multunit}{}^C_{24}(\Multunit[C]_{14})^*
  (\ProjBichar^*)^{\transpose\otimes\transpose}_{23}
  \widetilde{\ProjBichar}^{\transpose\otimes\transpose}_{13}
  \qquad\text{in }\U(\Hils\otimes\conj{\Hils}\otimes\conj{\Hils}\otimes\Hils).
\end{equation}
This unitary and~\(Q\) witness the
manageability of the braided multiplicative unitary
\((\Corep{U},\DuCorep{V},\BrMultunit)\).  The rather technical proof
of this fact is relegated to the appendix, see
Lemma~\ref{lemm:brmanag_analysis}.

\section{Examples of Quantum Groups with Projections}
\label{sec:Ex_Qunt_grp_proj}

The simplest examples of semidirect products of connected Lie groups
are \(\textup{E}(2) = \R^2\rtimes \T\) and the real and complex
\(ax+b\)-groups \(\R\rtimes \R_{>0}^\times\) and \(\C\rtimes
\C^\times\), where the second, multiplicative factor acts by multiplication
on the first, additive factor.  The group~\(\textup{E}(2)\) is the group of
isometries of the plane.  Another very important example is the
Poincar\'e group, the semidirect product of the Lorentz group
with~\(\R^4\).

When quantising such groups, one may try to preserve the semidirect
product structure, that is, construct \(\Cst\)\nb-quantum groups
with projection.  For instance, the quantum \(\textup{E}(2)\) groups by
Woronowicz~\cite{Woronowicz:Qnt_E2_and_Pontr_dual} have obvious
morphisms to the circle group~\(\T\) and back that compose to the
identity on~\(\T\).  The quantum \(az+b\) groups introduced by
Woronowicz~\cite{Woronowicz:Quantum_azb} and
Sołtan~\cite{Soltan:New_az_plus_b} -- which deform
\(\C\rtimes\C^\times\) -- have obvious morphisms to the group
\(\C_q^\times = q^{\Z+\ima\R}\subseteq \C^\times\) (with
multiplication as group structure) and back, which compose to the
identity on~\(\C_q^\times\); see also
\cite{Kasprzak-Soltan:Quantum_projection}*{Example 3.7}.  The
quantum \(ax+b\) group by Woronowicz and
Zakrzewski~\cite{Woronowicz-Zakrzewski:Quantum_axb} has an obvious
projection onto the group \(\R_{>0}^\times \cong \R\).

There are also quantum versions of semidirect product groups that
appear to have no such projection.  This includes the \(az+b\)
groups by Baaj and Skandalis, see
\cite{Vaes-Vainerman:Extension_of_lcqg}*{Section 5.3}, the \(ax+b\)
groups by Stachura~\cite{Stachura:ax_plus_b} and the
\(\kappa\)\nb-Poincaré groups by
Stachura~\cite{Stachura:Kappa-Poincare}.  These examples are all
constructed using the formalism of quantum group extensions
of~\cite{Vaes-Vainerman:Extension_of_lcqg}.  Quantum group
extensions are compared with quantum groups with projection
in~\cite{Kasprzak-Soltan:Extension_vs_Qgrp_proj}.

As an example of our theory, we are going to construct a braided
multiplicative unitary that generates ``simplified quantum
\(\textup{E}(2)\),'' a variant of quantum \(\textup{E}(2)\) also due
to Woronowicz (unpublished); whereas the quantum \(\textup{E}(2)\)
groups in~\cite{Woronowicz:Qnt_E2_and_Pontr_dual} deform a double
cover of~\(\textup{E}(2)\), the simplified variants deform
\(\textup{E}(2)\) itself.  A common feature of simplified quantum
\(\textup{E}(2)\) and the quantum groups with projection mentioned
above is that the image of the projection is a classical, Abelian
group.  This is to be expected when deforming semidirect products by
Abelian groups because these cannot be deformed to quantum groups in
interesting ways.  We begin by observing some common features of
braided multiplicative unitaries in case~\(\Multunit\) generates an
Abelian group~\(G\).

Let~\(\hat{G}\)
be the dual group.  The corepresentations \(\Corep{U}\)
and~\(\DuCorep{V}\)
in a braided multiplicative unitary are equivalent to representations
of \(G\)
and~\(\hat{G}\)
on the Hilbert space~\(\Hils[L]\),
respectively.  The compatibility
condition~\eqref{eq:U_V_compatible} for \(\Corep{U}\)
and~\(\DuCorep{V}\)
says here that the representations of \(G\)
and~\(\hat{G}\)
commute.  Thus we may combine them to one representation of
\(\hat{G}\times G\)
on~\(\Hils[L]\).

We can further normalise this representation because the left
regular representation of any quantum group absorbs every other
representation.  The operator \(\BrMultunit\otimes 1\) on
\(\Hils[L]\otimes L^2(\hat{G} \times G)\) is a braided
multiplicative unitary if and only if~\(\BrMultunit\) is, and it
generates an equivalent semidirect product quantum group.  Thus we
may assume without loss of generality that our representation is a
multiple of the left regular representation:
\[
\Hils[L]= L^2(\hat{G}\times G) \otimes \Hils[L]_0
\]
for some separable Hilbert space~\(\Hils[L]_0\), with \(\Cst(\hat{G}
\times G)\) acting only on the first tensor factor, by the regular
representation.  We may identify \(\Cont_0(G\times\hat{G}) \cong
\Cst(\hat{G}\times G)\) and \(L^2(\hat{G}\times G) \cong L^2(G\times
\hat{G})\) by the Fourier transform, and the regular representation
of \(\Cst(\hat{G}\times G)\) on \(L^2(\hat{G}\times G) \cong
L^2(G\times \hat{G})\) becomes the standard representation of
\(\Cont_0(G\times\hat{G})\) on \(L^2(G\times\hat{G})\) by pointwise
multiplication.

For some examples, a variant of the above is useful: if the
representation of~\(\hat{G}\times G\) on~\(\Hils[L]\) factors
through an Abelian locally compact group~\(H\), then we may
use~\(H\) instead of \(\hat{G}\times G\) in the above
simplification.  That is, we seek a braided multiplicative unitary
on the Hilbert space \(L^2(H)\otimes\Hils[L]_0\) with
\(\hat{G}\times G\) acting only on the first tensor factor, through
the regular representation of~\(H\) and the given homomorphism
\(\hat{G}\times G\to H\).

For instance, the compact quantum group \(\textup{U}_q(2)\) is a
semidirect product of the braided quantum group \(\textup{SU}_q(2)\)
by the circle~\(\T\)
(see~\cite{Kasprzak-Meyer-Roy-Woronowicz:Braided_SU2}), and the
relevant representation of \(\Z\times\T\) factors through a
homomorphism \(\Z\times\T\to\T\), \((n,z)\mapsto \lambda^n\cdot z\)
for some \(\lambda\in\T\).  For the quantum \(az+b\) groups,
\(G=\C_q\) is self-dual, and the relevant representation of
\(\hat{G}\times G\) factors through the map \(\hat{G}\times G \cong
G\times G \to G\), where the second map is the multiplication map
\((x,y)\mapsto x\cdot y\).

When we have simplified~\(\Hils[L]\) to \(L^2(G\times\hat{G})
\otimes \Hils[L]_0\) with \(\Cont_0(G\times\hat{G})\) acting by
pointwise multiplication, the braiding
operator~\(\Braiding{\Hils[L]}{\Hils[L]}\) is the operator of
pointwise multiplication with the circle-valued function
\begin{equation}
  \label{eq:braiding_Abelian}
  (g_1,\chi_1,g_2,\chi_2)\mapsto \chi_1(g_2).
\end{equation}
The conditions \eqref{eq:F_U-invariant} and~\eqref{eq:F_V-invariant}
for~\(\BrMultunit\) mean that
\(\BrMultunit\in\U(\Hils[L]\otimes\Hils[L])\) is a \(\hat{G}\times
G\)-equivariant operator with respect to the tensor product
representation of \(\hat{G}\times G\) on
\(\Hils[L]\otimes\Hils[L]\).  In terms of the above spectral
analysis, \(f\in \Cont_0(G\times\hat{G})\) acts on
\(\Hils[L]\otimes\Hils[L]\) by pointwise multiplication on each
fibre with the function
\[
\Delta^* f(g_1,\chi_1,g_2,\chi_2) = f(g_1g_2,\chi_1\chi_2)
\]
for all \((g_1,\chi_1,g_2,\chi_2)\in G\times\hat{G}\times
G\times\hat{G}\).  An operator on \(\Hils[L]\otimes\Hils[L]\) is
\(\hat{G}\times G\)-equivariant if and only if it commutes with the
operators of pointwise multiplication by~\(\Delta^* f\) for \(f\in
\Cont_0(G\times\hat{G})\).

Summing up, it suffices to look for braided multiplicative unitaries
over an Abelian group~\(G\) on the Hilbert space \(\Hils[L]=
L^2(G\times\hat{G}, \ell^2(\N))\).  Such a braided multiplicative
unitary \(\BrMultunit\in\U(\Hils[L]\otimes\Hils[L])\) must commute
with the operators of pointwise multiplication by functions in
\(\Delta^*\Contb(G\times\hat{G}) \subseteq
\Contb(G\times\hat{G}\times G\times\hat{G})\), and it must satisfy
the braided pentagon equation~\eqref{eq:top-braided_pentagon}, where
the braiding is given by pointwise multiplication with the function
in~\eqref{eq:braiding_Abelian}.

\subsection{Simplified quantum E(2) groups}
\label{sec:qnt_E_2}

Now we specialise to simplified quantum E(2) groups.
They were already treated in~\cite{Roy:Qgrp_with_proj} except
for the manageability of the braided multiplicative unitary in
question.  A \(\Cst\)\nb-algebraic version of this construction
appears in~\cite{Roy:Braided_Cstar}.

Here the image of the projection is the circle group \(G=\T\), so
\(\hat{G}=\Z\).  The analysis above suggests to construct a braided
multiplicative unitary for quantum E(2) on a Hilbert space of the form
\(L^2(\T\times\Z) \otimes \Hils[L]_0\).  Actually, we shall not
need~\(\Hils[L]_0\) and work on \(L^2(\T\times\Z)\) itself.  First
we describe the standard multiplicative unitary~\(\Multunit\)
generating~\(\T\).

Let \(\Hils\defeq \ell^2(\Z)\) and let~\(\{e_p\}_{p\in\Z}\) be an
orthonormal basis of~\(\Hils\).  Define
\[
u e_p\defeq e_{p+1} \quad\text{ and }\quad
\hat{N}e_p\defeq p e_p.
\]
The shift~\(u\) is unitary and generates the regular representation
of~\(\Z\) on~\(\Hils\).  The operator~\(\hat{N}\) is self-adjoint
with spectrum~\(\Z\), and the resulting representation of
\(\Cont_0(\Z) \cong \Cst(\T)\) is the Fourier transform of the
regular representation of~\(\T\) on~\(L^2(\T)\).  These operators
generate a representation of the crossed product
\(\Cont_0(\Z)\rtimes\Z \cong \Comp(\ell^2\Z)\).  That is, they
satisfy the commutation relation
\begin{equation}
  \label{eq:comm_u_hat_N}
  u^*\hat{N}u=\hat{N}+1.
\end{equation}

The multiplicative unitary generating~\(\T\) is
\[
\Multunit\defeq(1\otimes u)^{\hat{N}\otimes 1}
= \int_{\Z\times\T} z^s \, \diff E_{\hat{N}}(s)\otimes\diff E_{u}(z),
\qquad e_k\otimes e_l \mapsto e_k \otimes e_{l+k},
\]
where \(\diff E_{\hat{N}}\) and~\(\diff E_{u}\) are the spectral
measures of \(\hat{N}\) and~\(u\), respectively.  The commutation
relation~\eqref{eq:comm_u_hat_N} implies the pentagon equation
\[
(1\otimes 1\otimes u)^{1\otimes\hat{N}\otimes 1}
(1\otimes u\otimes 1)^{\hat{N}\otimes 1\otimes 1}
(1\otimes 1\otimes u^*)^{1\otimes\hat{N}\otimes 1}
= (1\otimes u\otimes u)^{\hat{N}\otimes 1\otimes 1}.
\]
for~\(\Multunit\), that is, \(\Multunit\) is a multiplicative
unitary.  Simple computations show that \(Q=1\) and
\(\widetilde{\Multunit}\defeq (1\otimes u^*)^{\hat{N}\otimes 1}\)
witness the manageability of~\(\Multunit\).  Slices of~\(\Multunit\)
in the first and second leg clearly generate the
\(\Cst\)\nb-algebras \(\Cst(\hat{N})\cong \Cont_0(\Z)\) and
\(\Cst(u)\cong \Cont_0(\T)\).  The comultiplications defined
by~\(\Multunit\) satisfy \(\Comult[\Cont(\T)](u)\defeq u\otimes u\)
and \(\Comult[\Contvin(\Z)](\hat{N})\defeq \hat{N}\otimes 1\dotplus
1\otimes\hat{N}\), where \(\hat{N}\otimes 1\dotplus
1\otimes\hat{N}\) means the unbounded affiliated element of
\(\Contvin(\Z\times\Z)\) given by the closure of the essentially
self-adjoint operator \(\hat{N}\otimes 1+ 1\otimes\hat{N}\).

Let \(0<q<1\).  We identify \(\Z\times\T \cong \C_q^\times =
q^{\Z+\ima \R}\subseteq \C^\times\) by mapping \((n,z)\mapsto
q^n\cdot z\).  As suggested above, we are going to construct a
multiplicative unitary on the Hilbert space \(L^2(\Z\times\T) =
L^2(\C_q^\times)\) with the regular representation of
\(\Z\times\T\).  We choose the orthonormal basis \(e_{k,l} \defeq
\delta_k\otimes z^l\) for \(k,l\in\Z\) in~\(L^2(\C_q^\times)\) and
thus identify \(\Hils[L] \cong \Hils\otimes \Hils\).  In our chosen
basis, \(\Z\) acts by \(\hat\alpha_n(e_{k,l}) = e_{k+n,l}\) for
\(n,k,l\in\Z\) and~\(\T\) acts by \(\alpha_\zeta(e_{k,l}) = \zeta^l
\cdot e_{k,l}\) for \(k,l\in\Z\), \(\zeta\in\T\).  Thus the right
and left corepresentations \(\Corep{U}\in\U(\Hils[L]\otimes\Hils)\)
and \(\DuCorep{V}\in\U(\Hils\otimes\Hils[L])\) and the resulting
braiding operator \(\Braiding{\Hils[L]}{\Hils[L]} \in
\U(\Hils[L]\otimes\Hils[L])\) are
\begin{alignat*}{2}
  \Corep{U} &= \Multunit_{23},&\qquad
  e_{k,l,m} &\mapsto e_{k,l,m+l}\\
  \DuCorep{V} &= \Multunit_{12},&\qquad
  e_{m,k,l} &\mapsto e_{m,k+m,l}\\
  \Braiding{\Hils[L]}{\Hils[L]} = Z\Sigma
  &= \Multunit[*]_{23} \Sigma,&\qquad
  e_{k,l} \otimes e_{n,p} &\mapsto e_{n,p} \otimes e_{k-p,l}.
\end{alignat*}
We also describe the representations of \(\Cont(\T) \cong \Cst(\Z)\)
and \(\Cont_0(\Z) \cong \Cst(\T)\) on~\(\Hils[L]\) through a unitary
operator~\(\udrinf\) and a self-adjoint operator \(\Nhatdrinf\) with
spectrum~\(\Z\) and commuting with~\(\udrinf\):
\[
\udrinf(e_{k,l})\defeq e_{k+1,l}
\qquad
\Nhatdrinf(e_{k,l})\defeq l e_{k,l}.
\]
We define a closed operator \(\Upsilon = \Ph{\Upsilon}
\Mod{\Upsilon}\) on~\(\Hils[L]\) by
\[
\Ph{\Upsilon} e_{k,l}\defeq e_{k,l+1},\qquad
\Mod{\Upsilon} e_{k,l}\defeq q^{2k+l} e_{k,l},\qquad
\Upsilon e_{k,l}\defeq q^{2k+l} e_{k,l+1}.
\]
The operator~\(\Ph{\Upsilon}\) is unitary and~\(\Mod{\Upsilon}\) is
strictly positive with spectrum \(q^\Z\cup\{0\}\), and
\(\Ph{\Upsilon}\) and~\(\Mod{\Upsilon}\) satisfy the following
commutation relations:
\begin{equation}
  \label{eq:commutation_Upsilon}
  \left\{\begin{alignedat}{2}
    \Ph{\Upsilon}\Mod{\Upsilon}\Ph{\Upsilon}^* &=
    q^{-1}\Mod{\Upsilon},\\
    \udrinf \Ph{\Upsilon} &= \Ph{\Upsilon} \udrinf,
    &\qquad& \udrinf \Mod{\Upsilon} \udrinf^* = q^{-2} \Mod{\Upsilon},\\
    \Ph{\Upsilon} \Nhatdrinf\Ph{\Upsilon}^* &= \Nhatdrinf-1,
    &\qquad& \Mod{\Upsilon} \text{ and }
    \Nhatdrinf \text{ strongly commute.}
  \end{alignedat}\right.
\end{equation}
Thus \(\Upsilon^{-1}(e_{k,l}) = q^{-2k-l+1} e_{k,l-1}\).  The closed
operator
\[
X\defeq \Upsilon q^{-2\Nhatdrinf}\otimes\Upsilon^{-1},\qquad
e_{k,l} \otimes e_{n,p} \mapsto
q^{2(k-n) - (l+p) + 1} e_{k,l+1} \otimes e_{n,p-1},
\]
on \(\Hils[L] \otimes \Hils[L]\) is normal because \(\Mod{X}\colon
e_{k,l} \otimes e_{n,p} \mapsto q^{2(k-n) - (l+p) + 1} e_{k,l}
\otimes e_{n,p}\) commutes with its phase \(\Ph{X}\colon e_{k,l}
\otimes e_{n,p} \mapsto e_{k,l+1} \otimes e_{n,p-1}\) in the polar
decomposition.  The spectrum of~\(X\) is \(\cl{\C}_q \defeq
\C_q^\times \cup\{0\}\).  Both \(\Mod{X}\) and~\(\Ph{X}\) strongly
commute with \(\udrinf\otimes \udrinf\) and \(\Nhatdrinf \otimes 1
\dotplus 1 \otimes \Nhatdrinf\).  Thus~\(X\) is equivariant for the
tensor product representations \(\Corep{U} \tenscorep \Corep{U}\)
and \(\DuCorep{V} \tenscorep \DuCorep{V}\).  Hence any circle-valued
function \(F\colon \cl{\C}_q \to \T\) gives a unitary~\(F(X)\) on
\(\Hils[L] \otimes \Hils[L]\) that is equivariant with respect to
\(\Corep{U} \tenscorep \Corep{U}\) and \(\DuCorep{V} \tenscorep
\DuCorep{V}\).

We want to choose~\(F\) so that~\(F(X)\) satisfies the braided
pentagon equation.  Since the functional calculus is compatible with
conjugation by unitaries, the top-braided pentagon equation
for~\(F(X)\) says
\begin{multline}
  \label{eq:braided_pentagon_Upsilon}
  F(F(X_{23}) X_{12} F(X_{23})^*)
  = F(X_{12}) \Braiding{\Hils[L]}{\Hils[L]}_{23} F(X_{12})
  \Dualbraiding{\Hils[L]}{\Hils[L]}_{23}
  \\= F(X_{12}) F(Z_{23} X_{13} Z_{23}^*).
\end{multline}
We compute
\[
Z_{23} X_{13} Z_{23}^* (e_{k,l} \otimes e_{n,p} \otimes e_{r,s})
% = Z_{23} X_{13} (e_{k,l} \otimes e_{n,p} \otimes e_{p+r,s})
% = q^{2(k-p-r) - (l+s) + 1} Z_{23} (e_{k,l+1} \otimes e_{n,p} \otimes
% e_{p+r,s-1})
= q^{2(k-p-r) - (l+s) + 1} (e_{k,l+1} \otimes e_{n,p} \otimes e_{p,s-1}).
\]
Thus
\[
Z_{23} X_{13} Z_{23}^*
= \Upsilon q^{-2\Nhatdrinf}\otimes q^{-2\Nhatdrinf}\otimes\Upsilon^{-1}
= X_{12} \cdot X_{23}.
\]
Hence~\eqref{eq:braided_pentagon_Upsilon} becomes
\begin{equation}
  \label{eq:braided_pentagon_Upsilon2}
  F(F(X_{23}) X_{12} F(X_{23})) = F(X_{12}) F(X_{12} X_{23}).
\end{equation}

The quantum exponential function is defined
in~\cite{Woronowicz:Operator_eq_E2} by
\begin{equation}
  \label{eq:Qnt_exp}
  \brmultunit_q(z)\defeq
  \begin{cases}
    \displaystyle \prod_{k=1}^{\infty}
    \frac{1+q^{2k}\conj{z}}{1+q^{2k} z}
    & z\in\cl{\C}_q \setminus\{-q^{2\Z}\},\\
    -1 & \text{otherwise.}
  \end{cases}
\end{equation}
This product converges absolutely outside \(-q^{2\Z}\), and
\(\bigl\lvert\frac{1+q^{2k}\conj{z}}{1+q^{2k} z}\bigr\rvert = 1\)
for all \(z\neq -q^{2k}\).  Thus~\(\brmultunit_q\) is a unitary
multiplier of \(\Cont_0(\cl{\C_q})\), and
\[
\BrMultunit\defeq
\brmultunit_q(\Upsilon^{-1} q^{-2\Nhatdrinf}\otimes\Upsilon)
\]
is a unitary operator on~\(\Hils[L]\otimes\Hils[L]\).

\begin{theorem}
  \label{the:Br_mult_Qnt_pl}
  The triple \((\Corep{U},\DuCorep{V},\BrMultunit)\) is a manageable
  braided multiplicative unitary on~\(\Hils[L]\) relative
  to~\(\Multunit\).
\end{theorem}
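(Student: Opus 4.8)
The plan is to verify the seven conditions in Definition~\ref{def:braided_multiplicative_unitary} for the triple $(\Corep{U},\DuCorep{V},\BrMultunit)$ and then the four conditions in Definition~\ref{def:braided_manageable}; that $\Multunit$ is a manageable multiplicative unitary --- so that the pentagon equation~\eqref{eq:pentagon} holds, with $Q=1$ and $\widetilde{\Multunit}=(1\otimes u^{*})^{\hat{N}\otimes 1}$ --- has already been recorded above. Under the identification $\Hils[L]=\Hils\otimes\Hils$ we have $\Corep{U}=\Multunit_{23}$ and $\DuCorep{V}=\Multunit_{12}$, so the corepresentation conditions~\eqref{eq:U_corep} and~\eqref{eq:V_corep} become, after relabelling the four copies of~$\Hils$, instances of the pentagon equation~\eqref{eq:pentagon} for~$\Multunit$. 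For the Drinfeld compatibility~\eqref{eq:U_V_compatible} I would observe that $\Corep{U}$ and $\DuCorep{V}$ act, through copies of~$\Multunit$, on disjoint tensor legs, with the two copies of~$\Multunit$ merely composing on the single leg they share; equivalently, the representations of $\T=G$ and $\Z=\hat{G}$ on~$\Hils[L]$ commute, which is immediate on the basis $\{e_{k,l}\}$. The invariance conditions~\eqref{eq:F_U-invariant} and~\eqref{eq:F_V-invariant} say exactly that $\BrMultunit$ commutes with $\Nhatdrinf\otimes 1\dotplus 1\otimes\Nhatdrinf$ and with $\udrinf\otimes\udrinf$; since $\BrMultunit$ is a unitary function of the normal operator~$X$, and the relations~\eqref{eq:commutation_Upsilon} give that both $\Mod{X}$ and $\Ph{X}$ strongly commute with these two operators (as already observed above), the invariance follows.

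It remains to check the braided pentagon equation~\eqref{eq:top-braided_pentagon}. This was reduced above --- taking $F=\brmultunit_q$ in~\eqref{eq:braided_pentagon_Upsilon2} --- to a functional equation relating $\brmultunit_q(X_{12})$, $\brmultunit_q(X_{23})$ and $\brmultunit_q(X_{12}X_{23})$, where $X_{12}$ and $X_{23}$ are normal operators with spectrum in~$\cl{\C}_q$ that satisfy a $q^{2}$-commutation relation. This is one of the characteristic operator identities for the quantum exponential function and is proved by Woronowicz in~\cite{Woronowicz:Operator_eq_E2}; invoking it shows that $(\Corep{U},\DuCorep{V},\BrMultunit)$ is a braided multiplicative unitary.

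The remaining task is manageability, which is the only part not already covered in~\cite{Roy:Qgrp_with_proj}. Since $Q=1$ for~$\Multunit$ but the quantum~$\textup{E}(2)$ group has a non-trivial scaling group, I would set $Q_{\Hils[L]}\defeq q^{c\Nhatdrinf}$ for a suitable real constant~$c$. This operator is strictly positive, and the commutation relations~\eqref{eq:br_manag_commute_U}--\eqref{eq:br_manag_commute_F} follow immediately from~\eqref{eq:commutation_Upsilon}: $\udrinf$ and $\Nhatdrinf$ commute, $\Ph{X}$ preserves the eigenspaces of $\Nhatdrinf\otimes 1\dotplus 1\otimes\Nhatdrinf$, and $\Mod{X}$ is diagonal, so $Q_{\Hils[L]}\otimes Q_{\Hils[L]}$ commutes with~$X$ and hence with~$\BrMultunit$. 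The substantive step is~\eqref{eq:br_manag}: one must exhibit a unitary $\widetilde{\BrMultunit}\widetilde{Z}{}^{*}$, where $\widetilde{Z}$ is the unitary defined in~\eqref{eq:braiding-manag} from $\DuCorep{V}$ and the contragredient datum $\widetilde{\Corep{U}}$ induced by $\widetilde{\Multunit}$ --- all explicit in the present situation. Using the product formula~\eqref{eq:Qnt_exp}, the identity $\conj{\brmultunit_q(z)}=\brmultunit_q(\conj{z})$ and the explicit forms of $Z$ and~$\widetilde{Z}$, I would compute $Z^{*}\BrMultunit$ and its transform on $\conj{\Hils[L]}\otimes\Hils[L]$ in closed form, identify $\widetilde{\BrMultunit}$ as a unitary function of a suitable transposed and complex-conjugated version of~$X$, fix~$c$ so that~\eqref{eq:br_manag} holds, and check this last identity directly on the orthonormal basis $\{e_{k,l}\otimes e_{n,p}\}$ of $\Hils[L]\otimes\Hils[L]$. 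I expect this final bookkeeping --- matching the $q$-shifts built into~$\brmultunit_q$ against the manageability data of~$\Multunit$ --- to be the main obstacle; the rest is formal.
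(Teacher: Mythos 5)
Your proposal is correct and follows essentially the same route as the paper: the algebraic conditions are checked exactly as you describe, the braided pentagon equation is reduced to~\eqref{eq:braided_pentagon_Upsilon2} and settled by the operator identities for~\(\brmultunit_q\) from \cite{Woronowicz:Operator_eq_E2}, and manageability is witnessed by \(Q_{\Hils[L]}=q^{-\Nhatdrinf}\) (your ansatz with \(c=-1\)) together with \(\widetilde{\BrMultunit}=\brmultunit_q(\widetilde{X})^*\) for an explicit conjugated operator~\(\widetilde{X}\). The final verification of~\eqref{eq:br_manag} is carried out on basis vectors just as you outline, using the Fourier expansion \(\brmultunit_q(z)=\sum_m \brmultunit_m(\Mod{z})\Ph{z}^m\) and the symmetry \(\brmultunit_m(q^n)=(-q)^m\brmultunit_{-m}(q^{n-m})\) in place of the product formula.
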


The proof will occupy the rest of this section.  For
\((\Corep{U},\DuCorep{V},\BrMultunit)\) to be a braided
multiplicative unitary, it only remains to verify the braided
pentagon equation, which is equivalent
to~\eqref{eq:braided_pentagon_Upsilon}.  We shall use the properties of the
quantum exponential function established
in~\cite{Woronowicz:Operator_eq_E2}.

The operators
\[
R \defeq X_{12}
= \Upsilon q^{-2\Nhatdrinf}\otimes\Upsilon^{-1} \otimes 1,\qquad
S \defeq X_{12} X_{23}
= \Upsilon q^{-2\Nhatdrinf}\otimes q^{-2\Nhatdrinf}\otimes \Upsilon^{-1}
\]
are normal and satisfy the commutation relations
in~\cite{Woronowicz:Operator_eq_E2}*{(0.1)}, that is, their phases
and their absolute values strongly commute and
\[
\Ph{R}^* \abs{S} \Ph{R} = q \abs{S},\qquad
\Ph{S} \abs{R} \Ph{S}^* = q \abs{R}.
\]
Since \(R^{-1} S = X_{23}\) is also normal with
spectrum~\(\cl{\C}_q\), \cite{Woronowicz:Operator_eq_E2}*{Theorems
  2.1--2} apply and show that \(R\dotplus S\) is normal with
spectrum~\(\cl{\C}_q\) and
\[
\brmultunit_q(X_{23}) \cdot X_{12}\cdot \brmultunit_q(X_{23})^*
= \brmultunit_q(R^{-1} S) \cdot R\cdot \brmultunit_q(R^{-1} S)^*
= R \dotplus S.
\]
Moreover, \cite{Woronowicz:Operator_eq_E2}*{Theorem 3.1} gives
\[
\brmultunit_q(R) \brmultunit_q(S) = \brmultunit_q(R \dotplus S).
\]
Both results of~\cite{Woronowicz:Operator_eq_E2} together
give~\eqref{eq:braided_pentagon_Upsilon2} for \(F=\brmultunit_q\);
this is equivalent to the braided pentagon equation
for~\(\BrMultunit\).

Now we turn to braided manageability.  First we compute the
unitary~\(\widetilde{Z}\).  It is the unique unitary on
\(\conj{\Hils[L]}\otimes\Hils[L]\) that
satisfies~\eqref{eq:braiding-manag}.  The
contragradient~\(\widetilde{\corep{U}}{}^*\) of~\(\corep{U}\) is given
in the standard basis \(\conj{e_{k,l}} \otimes e_m\) of
\(\conj{\Hils[L]}\otimes\Hils\) by
\(\widetilde{\corep{U}}^*(\conj{e_{k,l}} \otimes e_m) =
\conj{e_{k,l}} \otimes e_{m-l}\).  Hence \(\widetilde{Z} \in
\U(\conj{\Hils[L]}\otimes\Hils[L])\) acts on the standard basis by
\[
\widetilde{Z}(\conj{e_{k,l}} \otimes e_{n,p}) =
\conj{e_{k,l}} \otimes e_{n-l,p}.
\]
Equivalently, \(\widetilde{Z} = (1\otimes
\udrinf)^{\Nhatdrinf^{\transpose}\otimes 1}\).

Next we define the operator~\(Q_{\Hils[L]}\) required by
Definition~\ref{def:braided_manageable}:
\[
Q_{\Hils[L]} e_{k,l} \defeq q^{-l} e_{k,l}.
\]
This is a strictly positive operator on~\(\Hils[L]\) with
spectrum~\(q^\Z\cup\{0\}\).  It commutes with \(\udrinf\)
and~\(\Nhatdrinf\) and therefore satisfies
\eqref{eq:br_manag_commute_U} and~\eqref{eq:br_manag_commute_V}.
The operator \(Q_{\Hils[L]}\otimes Q_{\Hils[L]}\), mapping \(e_{k,l}
\otimes e_{n,p}\mapsto q^{-(l+p)} e_{k,l} \otimes e_{n,p}\),
commutes with \(X= \Upsilon q^{-2\Nhatdrinf} \otimes \Upsilon^{-1}\)
and therefore with \(\BrMultunit = \brmultunit_q(X)\).
Thus~\eqref{eq:br_manag_commute_F} holds as well.

Finally, we need a unitary \(\widetilde{\BrMultunit}\in
\U(\conj{\Hils[L]}\otimes\Hils[L])\) that
satisfies~\eqref{eq:br_manag}.  It suffices to check this if the
vectors \(x,y,u,v\) involved are standard basis vectors
\(x=e_{k,l}\), \(y=e_{n,p}\), \(u=e_{a,b}\), \(v=e_{c,d}\)
for~\(\Hils[L]\).  Using our explicit formulas for~\(Z\)
and~\(\widetilde{Z}\), we may rewrite~\eqref{eq:br_manag} as
\[
(e_{k,l} \otimes e_{a-l,b} \mid \BrMultunit \mid e_{n,p} \otimes
e_{c,d}) =
(\conj{e_{n,p}}\otimes q^{-b} e_{a,b} \mid \widetilde{\BrMultunit}
\mid \conj{e_{k,l}} \otimes q^d e_{c+l,d}).
\]
Substituting \(\gamma= c+l\) and \(\BrMultunit = \brmultunit_q(X)\),
this becomes
\begin{equation}
  \label{eq:manageable_E2_concrete}
  (\conj{e_{n,p}}\otimes e_{a,b} \mid \widetilde{\BrMultunit}
  \mid \conj{e_{k,l}} \otimes e_{\gamma,d})
  = q^{b-d} (e_{k,l} \otimes e_{a-l,b} \mid \brmultunit_q(X) \mid
  e_{n,p} \otimes e_{\gamma-l,d})
\end{equation}
for \(a,b,\gamma,d,k,l,n,p\in\Z\).  So the issue is whether the
bilinear form~\(\widetilde{\BrMultunit}\) defined by this equation
is unitary.

To compute the right hand side in~\eqref{eq:manageable_E2_concrete},
we Fourier transform the restrictions of~\(\brmultunit_q\) to the
circles \(\abs{z}=q^n\), \(n\in\Z\), and write
\[
\brmultunit_q(z) = \sum_{m\in\Z} \brmultunit_m(\Mod{z}) \Ph{z}^m,
\]
see~\cite{Baaj:Regular-Representation-E-2} or
\cite{Woronowicz:Quantum_azb}*{Appendix A}.  The
scalars~\(\brmultunit_m(q^n)\) for \(m,n\in\Z\) are real and satisfy
\[
\brmultunit_m(q^n) = (-q)^m \brmultunit_{-m}(q^{n-m}).
\]
The vectors \(e_{n,p} \otimes e_{\gamma-l,d}\) and \(e_{k,l} \otimes
e_{a-l,b}\) are eigenvectors of~\(\Mod{X}\) with eigenvalues
\(q^{2(n-\gamma+l) - (p+d)+1}\) and \(q^{2(k-a+l) - (b+l)+1}\),
respectively.  And~\(\Ph{X}^m\) acts on these vectors by
\(e_{k,l}\otimes e_{a,b}\mapsto e_{k,l+m}\otimes e_{a,b-m}\).  Thus
\begin{align*}
  &\phantom{{}={}} q^{b-d} (e_{k,l} \otimes e_{a-l,b} \mid \brmultunit_q(X) \mid
  e_{n,p} \otimes e_{\gamma-l,d})\\
  &=\sum_{m\in\Z}  q^{b-d} (e_{k,l} \otimes e_{a-l,b} \mid
  \Ph{X}^m \brmultunit_m(\Mod{X})\mid e_{n,p} \otimes e_{\gamma-l,d})
  \\&= \sum_{m\in\Z}  q^{b-d} \cdot \brmultunit_m(q^{2(n-\gamma+l) - (p+d)+1})
  \delta_{k,n}\delta_{l,p+m} \delta_{a-l,\gamma-l} \delta_{b,d-m}
  \\&= \delta_{k,n} \delta_{a,\gamma} \delta_{p,l+b-d}\cdot
  q^{b-d} \cdot \brmultunit_{d-b}(q^{2 k- 2 a + l - b+1})
  \\&= \delta_{k,n} \delta_{a,\gamma} \delta_{p,l+b-d}\cdot
  (-1)^{b-d} \cdot \brmultunit_{b-d}(q^{2 k- 2 a + l - d+1}).
\end{align*}
Now we define an unbounded normal operator~\(\widetilde{X}\)
on~\(\conj{\Hils[L]} \otimes \Hils\) with spectrum~\(\cl{\C}_q\) by
\[
\Mod{\widetilde{X}}(\conj{e_{k,l}} \otimes e_{n,p}) =
q^{2(k-n) + l - p+1} \conj{e_{k,l}} \otimes e_{n,p},\quad
\Ph{\widetilde{X}}(\conj{e_{k,l}} \otimes e_{n,p}) =
-\conj{e_{k,l+1}} \otimes e_{n,p+1},
\]
so \(\widetilde{X}(\conj{e_{k,l}} \otimes e_{n,p}) = -1\cdot
q^{2(k-n) + l - p+1} \conj{e_{k,l+1}} \otimes e_{n,p+1}\).  We claim
that the unitary \(\widetilde{\BrMultunit} \defeq
\brmultunit_q(\widetilde{X})^*\) will do:
\begin{align*}
  (\conj{e_{n,p}}\otimes e_{a,b} \mid \widetilde{\BrMultunit}
  \mid \conj{e_{k,l}} \otimes e_{\gamma,d})
  &=\sum_{m\in\Z} (\conj{e_{k,l}} \otimes e_{\gamma,d} \mid
  \Ph{\widetilde{X}}^m \brmultunit_m(\Mod{\widetilde{X}})
  \mid \conj{e_{n,p}}\otimes e_{a,b})
  \\&= \sum_{m\in\Z} (-1)^m \brmultunit_m(q^{2(n-a) + p - b+1})
  \delta_{k,n}\delta_{l,p+m} \delta_{\gamma,a} \delta_{d,b+m}
  \\&= \delta_{k,n} \delta_{a,\gamma} \delta_{p,l+b-d}\cdot
  (-1)^{d-b} \cdot \brmultunit_{d-b}(q^{2 k- 2 a + l - d + 1}).
\end{align*}
This is equal to the result of the computation above,
so~\eqref{eq:manageable_E2_concrete} holds.  Thus our braided
multiplicative unitary is manageable, and
Theorem~\ref{the:Br_mult_Qnt_pl} is proved.

\appendix
\section{Some Manageability Techniques}
\label{app_sec:mang_bichar}

Let \(\Multunit[A]\in\U(\Hils_A\otimes\Hils_A)\) and
\(\Multunit[B]\in\U(\Hils_B\otimes\Hils_B)\) be manageable
multiplicative unitaries as in
\cite{Woronowicz:Mult_unit_to_Qgrp}*{Definition
  1.2}, which generate \(\Cst\)\nb-quantum groups \(\Qgrp{G}{A}\) and
\(\Qgrp{H}{B}\).

Let \(\bichar\in\U(\hat{A}\otimes B)\) be a bicharacter from \(\G\)
to~\(\G[H]\). Let \(\Bichar\in\U(\Hils_A\otimes\Hils_B)\) be the
concrete realisation of~\(\bichar\).  Then \(\Bichar\in
\U(\Hils_A\otimes \Hils_B)\) is adapted to~\(\Multunit[B]\) in the
sense of \cite{Woronowicz:Mult_unit_to_Qgrp}*{Definition 1.3} by
\cite{Meyer-Roy-Woronowicz:Homomorphisms}*{Lemma 3.2}.  Thus
\(\Bichar\in\U(\Hils_A\otimes\Hils_B)\) is manageable by
\cite{Woronowicz:Mult_unit_to_Qgrp}*{Theorem 1.6}; that is, there is a
unitary \(\widetilde{\Bichar}\in\U(\conj{\Hils}_A\otimes\Hils_B)\)
with
\begin{equation}
  \label{eq:second_leg_bichar_adapted}
  \bigl(x\otimes u\big| \Bichar\big| z\otimes y\bigr)
  = \bigl(\conj{z}\otimes Q_B u\big| \widetilde{\Bichar}\big|
  \conj{x}\otimes Q_B^{-1}y\bigr)
\end{equation}
for all \(x,z\in\Hils_A\), \(u\in\dom(Q_B)\) and
\(y\in\dom(Q_B^{-1})\); here \(Q_B\) is one of the operators in the
manageability condition for~\(\Multunit[B]\).

\begin{lemma}
  \label{lem:V_tilde_commute_Q}
  \(\Bichar (Q_A\otimes Q_B)\Bichar^* = Q_A\otimes Q_B\) and
  \(\widetilde{\Bichar} (Q_A^\transpose\otimes
  Q_B^{-1})\widetilde{\Bichar}^* = Q_A^\transpose\otimes Q_B^{-1}\).
\end{lemma}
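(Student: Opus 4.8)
The plan is to reduce each identity to a commutation statement between~$\Bichar$ (respectively $\widetilde{\Bichar}$) and a one-parameter unitary group built from $Q_A$ and~$Q_B$, and then to read off the commutation from the structure theory of manageable multiplicative unitaries.

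For the first identity I would show that~$\Bichar$ commutes with $Q_A^{2\ima t}\otimes Q_B^{2\ima t}$ for every $t\in\R$, which is equivalent to $\Bichar(Q_A\otimes Q_B)\Bichar^*=Q_A\otimes Q_B$. The key inputs are: (i) for a manageable multiplicative unitary the operator~$Q$ implements the scaling group, \emph{with a uniform sign} on the quantum group and on its dual, so that conjugation by $Q_A^{2\ima t}$ on~$\Hils_A$ restricts to the scaling automorphism~$\hat\tau_t$ on $\hat{A}\subseteq\Bound(\Hils_A)$ and conjugation by $Q_B^{2\ima t}$ on~$\Hils_B$ restricts to the scaling automorphism~$\tau_t$ on $B\subseteq\Bound(\Hils_B)$ (see \cite{Woronowicz:Mult_unit_to_Qgrp} and \cite{Soltan-Woronowicz:Multiplicative_unitaries}); and (ii) a bicharacter is fixed by the balanced scaling action, $(\hat\tau_t\otimes\tau_t)(\bichar)=\bichar$ for all~$t$ — a standard property of quantum group morphisms, already visible for the identity endomorphism, whose bicharacter is~$\multunit[A]$ and for which the statement reduces to $\Multunit[A]^*(Q_A\otimes Q_A)\Multunit[A]=Q_A\otimes Q_A$, one of the manageability axioms. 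Since $\bichar\in\U(\hat{A}\otimes B)$ occupies the first leg through~$\hat{A}$ and the second leg through~$B$, conjugating~$\Bichar$ by $Q_A^{2\ima t}\otimes Q_B^{2\ima t}$ produces the concrete realisation of $(\hat\tau_t\otimes\tau_t)(\bichar)$, which by~(ii) equals~$\Bichar$; this gives the first identity.

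For the second identity I would deduce $\widetilde{\Bichar}(Q_A^{\transpose}\otimes Q_B^{-1})\widetilde{\Bichar}^*=Q_A^{\transpose}\otimes Q_B^{-1}$ from the first identity together with~\eqref{eq:second_leg_bichar_adapted}, along the lines of the proof of \cite{Woronowicz:Mult_unit_to_Qgrp}*{Proposition 1.4.(1)}. One evaluates the sesquilinear form $(x\otimes u\mid\Bichar\,(Q_A^{\ima t}\otimes Q_B^{\ima t})\mid z\otimes y)$ in two ways: first by moving $Q_A^{\ima t}\otimes Q_B^{\ima t}$ to the left through~$\Bichar$ (now legitimate) and applying~\eqref{eq:second_leg_bichar_adapted}; then by applying~\eqref{eq:second_leg_bichar_adapted} directly with $z,y$ replaced by $Q_A^{\ima t}z,Q_B^{\ima t}y$. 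Comparing the two expressions, and accounting for the passage to the conjugate Hilbert space~$\conj{\Hils}_A$ (which replaces~$Q_A$ by~$Q_A^{\transpose}$) and for the $Q_B^{\pm1}$ asymmetry in~\eqref{eq:second_leg_bichar_adapted} (which produces the~$Q_B^{-1}$), one reads off the claim. This step is purely formal once the first identity is available.

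The point needing care is (i)--(ii) in the first identity, namely the uniform-sign implementation of the scaling group by~$Q$ and the scaling-invariance of bicharacters; these are precisely the facts that distinguish \emph{manageable} from merely \emph{modular} multiplicative unitaries, and everything else is bookkeeping. If one prefers to avoid citing the scaling-invariance of bicharacters, an alternative is to use that~$\Bichar$ is adapted to~$\Multunit[B]$ in the $B$-leg by \cite{Meyer-Roy-Woronowicz:Homomorphisms}*{Lemma 3.2}, and, after applying the flip, is adapted to the manageable multiplicative unitary $\DuMultunit[A]=\Flip\Multunit[A]^*\Flip$ in the $\hat{A}$-leg — the latter carrying the same operator~$Q_A$ — and then to invoke the commutation relations for adapted operators in \cite{Woronowicz:Mult_unit_to_Qgrp}*{Theorem 1.6}, applied once in each leg.
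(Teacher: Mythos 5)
Your main argument coincides with the paper's proof: the first identity follows because $Q_A^{\ima t}$ and $Q_B^{\ima t}$ implement the scaling groups and the bicharacter is fixed by the balanced action $\tau^A_t\otimes\tau^B_t$ (the paper cites \cite{Meyer-Roy-Woronowicz:Homomorphisms}*{(20)} for this invariance), and the second identity is obtained exactly as you describe, by substituting $Q_A^{\ima t}x$, $Q_B^{\ima t}u$, $Q_A^{\ima t}z$, $Q_B^{\ima t}y$ into~\eqref{eq:second_leg_bichar_adapted} and comparing. One caveat: your optional alternative at the end is shaky, since \cite{Woronowicz:Mult_unit_to_Qgrp}*{Theorem 1.6} for an adapted unitary does not by itself yield commutation with $Q$ in a single leg (already $\Multunit$ itself commutes with $Q\otimes Q$ but not with $1\otimes Q$), so the scaling-invariance of bicharacters is really the indispensable input.
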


\begin{proof}
  The scaling group \(\tau_t^A\colon \R\to\Aut(A)\) of~\(A\) acts
  through conjugation by~\(Q_A^{\ima t}\) by
  \cite{Woronowicz:Mult_unit_to_Qgrp}*{Theorem 1.5.5}, and similarly
  for~\(B\).  \cite{Meyer-Roy-Woronowicz:Homomorphisms}*{(20)} says
  that~\(V\) is fixed by \(\tau^A_t\otimes \tau^B_t\).  This means
  that~\(\Bichar\) commutes with \(Q_A\otimes Q_B\), as asserted.

  Hence the left-hand side of~\eqref{eq:second_leg_bichar_adapted}
  does not change if we replace \(x\), \(u\), \(z\) and~\(y\) by
  \(Q^{\ima t}_A(x)\), \(Q^{\ima t}_B(u)\), \(Q^{\ima t}_A(z)\)
  and~\(Q^{\ima t}_B(y)\), respectively, for any \(t\in\R\).  Thus the
  right-hand sides also remain the same, that is,
  \[
  \bigl(\conj{z}\otimes Q_Bu\big|
  \widetilde{\Bichar}\big|\conj{x}\otimes Q_B^{-1}y\bigr)
  = \Bigl(\bigl[Q_A^\transpose\bigr]^{-\ima t}\conj{z}\otimes
  Q_B^{\ima t}Q_Bu\Big| \widetilde{\Bichar}\Big|
  \bigl[Q_A^\transpose\bigr]^{-\ima t}\conj{x}\otimes
  Q_B^{\ima t} Q^{-1}_Bu\Bigr).
  \]
  Hence \(\widetilde{\Bichar} =
  \Bigl(\bigl[Q_A^\transpose\bigr]^{\ima t} \otimes Q^{-\ima
    t}_B\Bigr)
  \widetilde{\Bichar}\Bigl(\bigl[Q_A^\transpose\bigr]^{\ima t}
  \otimes Q^{-\ima t}_B\Bigr)\) for all \(t\in\R\).  This says
  that~\(\widetilde{\Bichar}\) commutes with \(Q_A^\transpose\otimes
  Q_B^{-1}\).
\end{proof}

\begin{lemma}
  \label{lem:nice_basis_Q}
  Let~\(Q\) be a self\nb-adjoint, strictly positive operator on a
  Hilbert space~\(\Hils\).  There is an orthonormal
  basis~\((e_i)_{i\in\N}\) in~\(\Hils\) with
  \(e_i\in\dom(Q)\cap\dom(Q^{-1})\) and strong convergence
  \begin{equation}
    \label{eq:nice_basis_Q}
    \sum_{i\in\N} \ket{Q^{-1}e_i}\bra{Q e_i} = \Id_{\Hils}.
  \end{equation}
\end{lemma}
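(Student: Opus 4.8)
The plan is to reduce everything to spectral subspaces on which~\(Q\) is bounded with bounded inverse, using the spectral theorem. Since~\(Q\) is strictly positive (in particular densely defined and self\nb-adjoint with trivial kernel), its spectral projection onto~\((0,\infty)\) equals~\(\Id_{\Hils}\); hence the spectral projections \(P_n \defeq \chi_{[2^n,2^{n+1})}(Q)\) for \(n\in\Z\) are pairwise orthogonal and \(\sum_{n\in\Z} P_n = \Id_{\Hils}\) strongly. On \(\textup{ran}(P_n)\) the operator~\(Q\) restricts to a bounded, boundedly invertible operator with spectrum in~\([2^n,2^{n+1})\); in particular \(\textup{ran}(P_n)\subseteq \dom(Q)\cap\dom(Q^{-1})\), the operators \(Q^{-1}P_n\) and~\(QP_n\) are bounded, and~\(Q\) commutes with each~\(P_n\).

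First I would choose, for each \(n\in\Z\), an orthonormal basis \((e^{(n)}_i)_i\) of~\(\textup{ran}(P_n)\); concatenating these over~\(n\) yields an orthonormal basis of~\(\Hils\), which after relabelling is the desired family \((e_i)_{i\in\N}\). Every~\(e_i\) lies in \(\dom(Q)\cap\dom(Q^{-1})\) by the previous paragraph, so only~\eqref{eq:nice_basis_Q} remains to be checked.

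For~\eqref{eq:nice_basis_Q}, I would fix \(\xi\in\Hils\) and evaluate \(\sum_i \ket{Q^{-1}e_i}\bra{Qe_i}\xi = \sum_n\sum_i \langle Qe^{(n)}_i,\xi\rangle\, Q^{-1}e^{(n)}_i\). Within the \(n\)th block, self\nb-adjointness of~\(Q\) together with \(e^{(n)}_i\in\textup{ran}(P_n)\) gives \(\langle Qe^{(n)}_i,\xi\rangle = \langle e^{(n)}_i,QP_n\xi\rangle\), so the block equals \(Q^{-1}P_n\bigl(\sum_i \langle e^{(n)}_i,QP_n\xi\rangle\, e^{(n)}_i\bigr)\). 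Here the inner series is the orthonormal expansion of \(QP_n\xi\in\textup{ran}(P_n)\) and converges in norm, while \(Q^{-1}P_n\) is bounded, so the block converges in norm to \(Q^{-1}P_n(QP_n\xi) = P_n\xi\). The blocks have mutually orthogonal ranges and \(\sum_n\norm{P_n\xi}^2 = \norm{\xi}^2\), so summing over~\(n\) gives \(\sum_n P_n\xi = \xi\); this also shows that the net of finite partial sums of \(\sum_i \ket{Q^{-1}e_i}\bra{Qe_i}\) converges strongly to~\(\Id_{\Hils}\).

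The construction is essentially immediate once one localises by the dyadic spectral projections of~\(Q\); the one point that needs care is the last step, since the series \(\sum_i \ket{Q^{-1}e_i}\bra{Qe_i}\) is not absolutely convergent. The dyadic truncation removes this difficulty by turning each block into a norm\nb-convergent orthonormal expansion followed by application of a single bounded operator, so I expect no serious obstacle beyond bookkeeping.
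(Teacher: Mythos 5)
Your proof is correct and follows essentially the same route as the paper: both localise to dyadic spectral subspaces of \(Q\) and take orthonormal bases adapted to them, the paper packaging this as a factorisation \(Q=Q'Q''\) with \(Q'\) bounded with bounded inverse so that \(\ket{Q^{-1}e_i}\bra{Q e_i}=(Q')^{-1}\ket{e_i}\bra{e_i}Q'\) and the strong convergence of the whole net of partial sums is immediate. Your block-by-block variant works too; just note that the final passage from block-wise convergence to convergence of the full net of finite partial sums tacitly uses the uniform bound \(\norm{Q^{-1}P_n}\,\norm{QP_n}\le 2\), which your dyadic choice of spectral intervals provides.
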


\begin{proof}
  For \(n\in\Z\) and \(\lambda\in [2^{2n-1},2^{2n+1})\), let
  \(f(\lambda)=2^{-2n}\lambda\) and \(g(\lambda) =
  \lambda/f(\lambda)\).  The function
  \[
  \R_{>0} \ni \lambda\mapsto f(\lambda)\in [2^{-1},2)
  \]
  is piecewise linear and bounded with bounded inverse.  Hence the
  Borel functional calculus for self-adjoint operators gives
  \(Q'\defeq f(Q)\), which is self-adjoint and bounded with a bounded
  inverse.  We also get the self-adjoint operator \(Q'' = g(Q)\),
  which has countable spectrum~\(\{2^{2n}\mid n\in\Z\}\).
  Thus~\(\Hils\) is the orthogonal direct sum of the
  \(2^{2n}\)-eigenspaces of~\(\Hils\).  We choose orthonormal bases
  for all these eigenspaces and put them together to an orthonormal
  basis~\((e_i)_{i\in\N}\) of~\(\Hils\).  We have \(Q=Q'Q''\) and
  \(Q^{-1} = (Q')^{-1}(Q'')^{-1}\) by functional calculus.  Since the
  operators~\((Q')^{\pm1}\) are bounded and self-adjoint, \(Q\)
  and~\(Q''\) have the same domain.  Thus \(e_i\in \dom(Q)\cap
  \dom(Q^{-1})\) because it is an eigenvector of~\(Q''\) with some
  positive eigenvalue~\(2^{2n}\) for some \(n\in\Z\) depending
  on~\(i\).  Since~\((Q')^{\pm1}\) are bounded and \(Q''e_i = 2^{2n}
  e_i\), we may rewrite
  \begin{multline*}
    \ket{Q^{-1}e_i}\bra{Q e_i}
    = \ket{(Q')^{-1} (Q'')^{-1}e_i} \bra{Q' Q''e_i}
    = \ket{(Q')^{-1} 2^{-2n} e_i} \bra{Q' 2^{2n} e_i}
    \\= (Q')^{-1} \ket{e_i} \bra{e_i} Q'.
  \end{multline*}
  The sum \(\sum_{i\in\N} \ket{e_i}\bra{e_i}\) converges strongly to
  the identity on~\(\Hils\) because~\((e_i)_{i\in\N}\) is an
  orthonormal basis for~\(\Hils\).  Since~\((Q')^{\pm1}\) are
  bounded, the sum over \((Q')^{-1} \ket{e_i} \bra{e_i} Q'\)
  converges strongly to \((Q')^{-1} \cdot 1 \cdot Q' = 1\).
\end{proof}

The following lemma completes the proof of
Theorem~\ref{the:mang_mult_from_braided_and_standard}.

\begin{lemma}
  \label{lem:mang_mult_from_braided_and_standard}
  In the situation of the proof of
  Theorem~\textup{\ref{the:mang_mult_from_braided_and_standard}},
  \(\Multunit[C]\), \(Q\) and~\(\widetilde{\Multunit}{}^C\)
  verify~\eqref{eq:Multunit_manageable}.
\end{lemma}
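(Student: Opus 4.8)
The plan is to verify the manageability matrix\nobreakdash-coefficient identity~\eqref{eq:Multunit_manageable} for the triple $\Multunit[C]$, $\widetilde{\Multunit}{}^C$ and $Q^C=Q\otimes Q_{\Hils[L]}$ by a direct, if lengthy, computation. By sesquilinearity and density it suffices to test the identity on simple tensors $x=x_1\otimes x_2$, $z=z_1\otimes z_2$ in $\Hils\otimes\Hils[L]$ and $u=u_1\otimes u_2$, $y=y_1\otimes y_2$ in $\Hils\otimes\Hils[L]$; moreover, by Lemma~\ref{lem:nice_basis_Q} we may let $x_1,z_1,u_1,y_1$ range over an orthonormal basis of~$\Hils$ adapted to~$Q$ and $x_2,z_2,u_2,y_2$ over one of~$\Hils[L]$ adapted to~$Q_{\Hils[L]}$, so that in particular all four vectors on the $\Hils$\nobreakdash-legs lie in $\dom(Q)\cap\dom(Q^{-1})$ and all four on the $\Hils[L]$\nobreakdash-legs in $\dom(Q_{\Hils[L]})\cap\dom(Q_{\Hils[L]}^{-1})$.

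First I would peel the five factors $\Multunit_{13}$, $\Corep{U}_{23}$, $\DuCorep{V}^*_{34}$, $\BrMultunit_{24}$, $\DuCorep{V}_{34}$ of $\Multunit[C]_{1234}$ apart by inserting, between consecutive factors and on the legs they share, the $Q$\nobreakdash-twisted resolutions of the identity~\eqref{eq:nice_basis_Q}; the boundedness built into the bases of Lemma~\ref{lem:nice_basis_Q} makes the resulting multiple series absolutely convergent. Each of the five resulting matrix coefficients is then rewritten by the manageability datum of the corresponding factor: \eqref{eq:Multunit_manageable} for $\Multunit_{13}$; equation~\eqref{eq:Corep_U_tilde} for $\Corep{U}_{23}$; the analogous adaptedness identity for the left corepresentation~$\DuCorep{V}$, applied once to $\DuCorep{V}^*_{34}$ and once to $\DuCorep{V}_{34}$ (this identity follows from \cite{Woronowicz:Mult_unit_to_Qgrp}*{Theorem 1.6} applied to the right corepresentation $\Corep{V}=\Flip\DuCorep{V}^*\Flip$ of the manageable dual $\DuMultunit$); and the braided\nobreakdash-manageability identity~\eqref{eq:br_manag} for $\BrMultunit_{24}$, which in addition produces the braiding operator $Z_{24}$ on the two $\Hils[L]$\nobreakdash-legs and, on the conjugate side, its partner $\widetilde Z_{24}$ from~\eqref{eq:braiding-manag}. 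Collapsing the basis sums again by~\eqref{eq:nice_basis_Q} read backwards leaves $(\conj z\otimes Q^C u\mid M\mid \conj x\otimes (Q^C)^{-1}y)$ for a definite unitary $M$ on $\conj\Hils\otimes\conj{\Hils[L]}\otimes\Hils\otimes\Hils[L]$ built from $\widetilde\Multunit_{13}$, $\widetilde{\Corep{U}}_{23}$, $\DuCorep{V}^*_{34}$, $\DuCorep{V}_{34}$, $\widetilde{\BrMultunit}_{24}$, $Z_{24}$ and $\widetilde Z_{24}$.

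The remaining, and genuinely delicate, task is to identify this $M$ with $\widetilde{\Multunit}{}^C=\widetilde\Multunit_{13}\DuCorep{V}^*_{34}\widetilde{\BrMultunit}_{24}\widetilde{\Corep{U}}_{23}\DuCorep{V}_{34}$ as in~\eqref{eq:manageable_W_1234}. This calls for three kinds of rearrangement. First, the braiding factors must cancel: the stray $Z_{24}$ has to be absorbed against the $\widetilde Z^*_{24}$ that is bundled with $\widetilde{\BrMultunit}$ in~\eqref{eq:br_manag} and against the neighbouring $\widetilde{\Corep{U}}_{23}$ and $\DuCorep{V}_{34}$, using the explicit descriptions~\eqref{eq:braiding} and~\eqref{eq:braiding-manag} of $Z$ and $\widetilde Z$ together with the corepresentation relations~\eqref{eq:U_corep}, \eqref{eq:V_corep} and the Drinfeld compatibility~\eqref{eq:U_V_compatible} --- precisely the way the braiding is disentangled in the proof of Theorem~\ref{the:standard_mult_from_braided_and_standard}. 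Second, $\widetilde{\Corep{U}}_{23}$ and $\widetilde{\BrMultunit}_{24}$ emerge in the opposite order to that in which $\Corep{U}_{23}$ and $\BrMultunit_{24}$ entered, and so must be commuted past one another; this is allowed because of the $\Corep{U}$\nobreakdash-invariance~\eqref{eq:F_U-invariant} of $\BrMultunit$ and its conjugate counterpart. Third, the weights $Q^{\pm1}$, $Q_{\Hils[L]}^{\pm1}$, the transposes and the complex conjugations accumulated along the way must line up; this is governed by the commutation relations of $\Multunit$, $\Corep{U}$, $\DuCorep{V}$ and $\BrMultunit$ with the $Q$\nobreakdash-operators recorded in Definition~\ref{def:braided_manageable}, in Lemma~\ref{lem:V_tilde_commute_Q}, and in~\eqref{eq:Z_comm_Q}--\eqref{eq:tilde-Z-W-comm_Q2}.

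\textbf{Main obstacle.} The braiding bookkeeping of the last paragraph is where I expect the real work to lie: making the lone $Z_{24}$ merge cleanly with $\widetilde Z^*_{24}$ and with the corepresentation factors forces one to push $\widetilde{\Corep{U}}_{23}$ and $\DuCorep{V}_{34}$ through $\widetilde{\BrMultunit}_{24}$ in just the right way, and it is exactly here that all seven defining relations of the braided multiplicative unitary (and their conjugate versions) must come into play. Once these commutations are executed the two sides of~\eqref{eq:Multunit_manageable} agree, which proves the lemma and, with it, completes the proof of Theorem~\ref{the:mang_mult_from_braided_and_standard}.
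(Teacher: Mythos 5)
Your outline identifies the right ingredients (reduction to simple tensors, the $Q$\nb-twisted resolutions of the identity from Lemma~\ref{lem:nice_basis_Q}, and the manageability data of the individual factors), but it defers the actual content of the proof to your ``main obstacle'' paragraph, and the route you sketch for that part would not go through as stated. The paper avoids the braiding bookkeeping entirely by first rewriting \(\Multunit[C]_{1234} = \Multunit_{13} \DuCorep{V}^*_{34} \Corep{U}_{23} (Z^*_{24}\BrMultunit_{24}) \DuCorep{V}_{34}\) using~\eqref{eq:braiding} (this is~\eqref{eq:W_1234_equiv_form}); the point is that \eqref{eq:br_manag} controls the block \(Z^*\BrMultunit\) as a whole, so if that block is kept intact there is never a lone \(Z_{24}\) or \(\widetilde{Z}_{24}\) to cancel. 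Your plan of applying \eqref{eq:br_manag} to \(\BrMultunit_{24}\) alone and then absorbing the resulting braidings against neighbouring factors is exactly the work that this preliminary rearrangement makes unnecessary --- and you do not carry it out.

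Two of the steps you do describe are moreover problematic. First, the \(\DuCorep{V}\) factors must \emph{not} be transformed by an adaptedness identity: in \(\widetilde{\Multunit}{}^C\) (see~\eqref{eq:manageable_W_1234}) they appear unchanged on the unconjugated legs \(3,4\), whereas any identity of the type~\eqref{eq:second_leg_bichar_adapted} would move them onto conjugated legs. The correct treatment is to use that \(\DuCorep{V}\) commutes with \(Q\otimes Q_{\Hils[L]}\) and conjugate the whole matrix-coefficient identity by \(\DuCorep{V}_{34}\), as in~\eqref{eq:aux_step_manag_W_1234_2}. Second, your proposed commutation of \(\widetilde{\Corep{U}}_{23}\) past \(\widetilde{\BrMultunit}_{24}\) ``by \eqref{eq:F_U-invariant} and its conjugate counterpart'' is unjustified: \eqref{eq:F_U-invariant} says that the \emph{product} \(\Corep{U}_{13}\Corep{U}_{23}\) commutes with \(\BrMultunit_{12}\), not that two operators sharing a leg commute individually. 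In the correct computation no such commutation is needed: the bra--ket swaps in \eqref{eq:Multunit_manageable}, \eqref{eq:Corep_U_tilde} and \eqref{eq:br_manag} reverse the order of the factors on the conjugated legs automatically, and \(\widetilde{\Multunit}{}^C\) is defined in~\eqref{eq:manageable_W_1234} with precisely that reversed order. With just two insertions of a resolution of the identity (an ordinary orthonormal basis of \(\Hils[L]\) on leg \(2\), and a basis as in Lemma~\ref{lem:nice_basis_Q} on leg \(3\), the only place where the twisted relation~\eqref{eq:nice_basis_Q} is needed) the identity then drops out. As written, your proposal leaves the essential part of the argument undone and points it in a direction that would not close.
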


\begin{proof}
  We continue in the notation of the proof of
  Theorem~\ref{the:mang_mult_from_braided_and_standard}.  It suffices
  to check~\eqref{eq:Multunit_manageable} when \(x,z,u,y\) are tensor
  monomials: \(x=x_1\otimes x_2\), \(z=z_1\otimes z_2\),
  \(u=u_1\otimes u_2\), \(y=y_1\otimes y_2\) with \(x_1,z_1\in\Hils\),
  \(x_2,z_2\in\Hils[L]\), \(u_1\in\dom(Q)\), \(u_2\in\dom(Q_{\Hils[L]})\),
  \(y_1\in\dom(Q^{-1})\), \(y_2\in\dom(Q_{\Hils[L]}^{-1})\).  This implies the
  assertion for all \(x,y,z,u\).

  Equation~\eqref{eq:braiding} is equivalent to
  \(\Corep{U}_{23}\DuCorep{V}_{34}^* = \DuCorep{V}_{34}^*
  \Corep{U}_{23} Z_{13}^*\), which gives
  \begin{equation}
    \label{eq:W_1234_equiv_form}
    \Multunit[C]_{1234} = \Multunit_{13} \DuCorep{V}^*_{34}
    \Corep{U}_{23} (Z^*_{24} \BrMultunit_{24}) \DuCorep{V}_{34}.
  \end{equation}

  We first concentrate on the part \(\Corep{U}_{23} (Z^*_{24}
  \BrMultunit_{24})\) in~\eqref{eq:W_1234_equiv_form}.  Let
  \((e_i)_{i\in\N}\) be an orthonormal basis of~\(\Hils[L]\).  Then
  \eqref{eq:Corep_U_tilde} and~\eqref{eq:br_manag} give
  \begin{align*}
    &\phantom{{}={}}\left<x_2\otimes u_1\otimes u_2 \middle| \Corep{U}_{12}
      Z^*_{13}\BrMultunit_{13} \middle| z_2\otimes y_1\otimes y_2\right>
    \\&= \sum \left<x_2\otimes u_1\otimes u_2 \middle| \Corep{U}_{12}
      \cdot \bigl(\ket{e_i}\bra{e_i} \otimes 1\otimes 1\bigr)
      \cdot Z^*_{13}\BrMultunit_{13} \middle| z_2\otimes y_1\otimes y_2\right>
    \\&= \sum
    \left< x_2\otimes u_1 \middle| \Corep{U} \middle|
      e_i\otimes y_1\right> \cdot
    \left<e_i\otimes u_2\middle| Z^*\BrMultunit \middle| z_2\otimes y_2\right>
    \\&= \sum
    \left<\conj{z_2}\otimes Q_{\Hils[L]}(u_2) \middle| \widetilde{\BrMultunit} \middle|
      \conj{e_i}\otimes Q_{\Hils[L]}^{-1}(y_2)\right>
    \left< \conj{e_i}\otimes Q(u_1) \middle| \widetilde{\Corep{U}}
      \middle| \conj{x_2} \otimes Q^{-1}(y_1)\right>
    \\&= \left<\conj{z_2}\otimes Q(u_1) \otimes Q_{\Hils[L]}(u_2) \middle|
      \widetilde{\BrMultunit}_{13} \widetilde{\Corep{U}}_{12} \middle|
      \conj{x_2}\otimes Q^{-1}(y_1)\otimes Q_{\Hils[L]}^{-1}(y_2)\right>
  \end{align*}

  Since~\(\DuCorep{V}\) commutes with~\(Q\otimes Q_{\Hils[L]}\), it also
  preserves the domains of \((Q \otimes Q_{\Hils[L]})^{-1}\), and we get an
  equivalent statement if we replace \(u_1\otimes u_2\) and
  \(y_1\otimes y_2\) above by \(\DuCorep{V}(u_1\otimes u_2)\) and
  \(\DuCorep{V}(y_1\otimes y_2)\), respectively.  This gives
  \begin{multline}
    \label{eq:aux_step_manag_W_1234_2}
    \left<x_2\otimes u_1\otimes u_2\middle| \DuCorep{V}_{23}^*
      \Corep{U}_{12} Z^*_{13} \BrMultunit_{13} \DuCorep{V}_{23}
      \middle| z_2\otimes y_1\otimes y_2\right>
    \\= \left<\conj{z_2}\otimes Q(u_1) \otimes Q_{\Hils[L]}(u_2) \middle|
      \DuCorep{V}_{23}^* \widetilde{\BrMultunit}_{13}
      \widetilde{\Corep{U}}_{12} \DuCorep{V}_{23} \middle|
      \conj{x_2}\otimes Q^{-1}(y_1)\otimes Q_{\Hils[L]}^{-1}(y_2)\right>.
  \end{multline}
  Now let \((\epsilon_j)_{j\in\N}\) be a basis of~\(\Hils\) as in
  Lemma~\ref{lem:nice_basis_Q}, that is,
  \[
  \sum_j \ket{Q^{-1}(\epsilon_j)}\bra{Q(\epsilon_j)} = \Id_{\Hils}.
  \]
  We compute
  \begin{align*}
    &\phantom{{}={}}
    \left<x_1\otimes x_2\otimes u_1\otimes u_2 \middle| \Multunit_{13}
      \DuCorep{V}_{34}^* \Corep{U}_{23} Z^*_{24}\BrMultunit_{24}
      \DuCorep{V}_{34} \middle| z_1\otimes z_2\otimes y_1\otimes y_2\right>
    \\&=
    \begin{multlined}[t][.96\linewidth]
      \sum_j
      \bigl<x_1\otimes x_2\otimes u_1\otimes u_2 \big|
      \Multunit_{13}\cdot
      \bigl(1\otimes 1\otimes \big|
      \ket{\epsilon_j}\bra{\epsilon_j} \otimes 1\bigr)
      \cdot \DuCorep{V}_{34}^* \Corep{U}_{23} Z^*_{24}
      \BrMultunit_{24} \DuCorep{V}_{34} \big|
      \\ \big| z_1\otimes z_2\otimes y_1\otimes y_2\bigr>
    \end{multlined}
    \\&= \sum_j
    \left<x_1\otimes u_1 \middle| \Multunit \middle|
      z_1\otimes \epsilon_i\right> \cdot
    \left<x_2\otimes \epsilon_i\otimes u_2 \middle|
      \DuCorep{V}_{23}^* \Corep{U}_{12} (Z^*\BrMultunit)_{13}
      \DuCorep{V}_{23} \middle| z_2\otimes y_1\otimes y_2\right>
    \\&=
    \begin{multlined}[t][.96\linewidth]
      \sum_j
      \left<\conj{z_1}\otimes Q(u_1) \middle| \widetilde{\Multunit}
        \middle| \conj{x_1}\otimes Q^{-1}(\epsilon_i)\right>
      \\\cdot
      \left<\conj{z_2}\otimes Q(\epsilon_i) \otimes Q_{\Hils[L]}(u_2) \middle|
        \DuCorep{V}_{23}^* \widetilde{\BrMultunit}_{13}
        \widetilde{\Corep{U}}_{12} \DuCorep{V}_{23}\middle|
        \conj{x_2}\otimes Q^{-1}(y_1)\otimes Q_{\Hils[L]}^{-1}(y_2)\right>
    \end{multlined}
    \\&=
    \begin{multlined}[t][.96\linewidth]
      \Bigl<\conj{z_1} \otimes \conj{z_2} \otimes Q(u_1) \otimes Q_{\Hils[L]}(u_2)\Big|
      \widetilde{\Multunit}_{13} \DuCorep{V}_{34}^*
      \widetilde{\BrMultunit}_{24} \widetilde{\Corep{U}}_{23}
      \DuCorep{V}_{34}\Big|
      \\ \Big|\conj{x_1} \otimes \conj{x_2} \otimes Q^{-1}(y_1)
      \otimes Q_{\Hils[L]}^{-1}(y_2)\Bigr>.
    \end{multlined}
  \end{align*}
  Thus~\eqref{eq:Multunit_manageable} holds for \(\Multunit[C]\),
  \(Q\) and~\(\widetilde{\Multunit}{}^C\).
\end{proof}

\begin{lemma}
  \label{lemm:brmanag_analysis}
  The unitary
  \(\widetilde{\BrMultunit}\in\U(\conj{\Hils[L]}\otimes\Hils[L])\)
  defined by~\eqref{eq:mang-br-analysis} satisfies the manageability
  condition~\eqref{eq:br_manag} for
  \(\BrMultunit\in\U(\Hils[L]\otimes\Hils[L])\)
  in Theorem~\textup{\ref{the:analysis_qg_projection}}.
\end{lemma}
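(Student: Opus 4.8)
The plan is to verify the pairing identity~\eqref{eq:br_manag} directly. The key observation, already prepared by the computations in the paragraphs preceding the lemma, is that $Z^*\BrMultunit$ is a product of four unitaries on the four tensor legs of $\conj{\Hils}\otimes\Hils\otimes\conj{\Hils}\otimes\Hils$, each acting on only two legs and each satisfying a manageability-type pairing relation: with $Z^*\BrMultunit = \Flip_{23}\widetilde{\DuProjBichar}_{23}\Flip_{23}\,\ProjBichar^{\transpose\otimes\transpose}_{13}\,\Multunit[C]_{24}\,(\widetilde{\Multunit}{}^C_{14})^*$, the four factors are governed, respectively, by the manageability of the multiplicative unitaries $\DuProjBichar$ (witnessed by $\widetilde{\DuProjBichar}$ and $Q_C$), $\ProjBichar$ (witnessed by $\widetilde{\ProjBichar}$ and $Q_C$) and $\Multunit[C]$ (witnessed by $\widetilde{\Multunit}{}^C$ and $Q_C$), together with the transpose identities from \cite{Woronowicz:Mult_unit_to_Qgrp}*{Theorem 1.6} that were quoted above. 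The operator~\eqref{eq:mang-br-analysis} is assembled from the tilde-versions of the same four unitaries, with the braiding correction $\widetilde{Z}$ absorbed into it, as the text permits.

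First I would reduce by density to the case in which $x,y\in\Hils[L]$ and $u,v\in\Hils[L]$ are elementary tensors $x=x_1\otimes x_2$, $u=u_1\otimes u_2$, and so on, with the $x_i,y_i$ arbitrary and $u_i,v_i$ in the appropriate domains of powers of $Q_C$. Then I would evaluate the left-hand side of~\eqref{eq:br_manag} by peeling off the four factors of $Z^*\BrMultunit$ one at a time, in the manner of the proof of Lemma~\ref{lem:mang_mult_from_braided_and_standard}: insert a resolution of the identity on the appropriate leg (an ordinary orthonormal basis where no power of $Q_C$ appears, and a basis adapted to $Q_C$ as in Lemma~\ref{lem:nice_basis_Q} where $Q_C^{\pm1}$ or $Q_C^{\transpose}$ enters) and use the corresponding manageability relation to trade that factor for its tilde while transferring the $Q_C$-operators onto the bra and ket vectors. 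Carrying this out for all four factors and reassembling yields precisely the right-hand side of~\eqref{eq:br_manag} with the operator~\eqref{eq:mang-br-analysis} in place of $\widetilde{\BrMultunit}\widetilde{Z}{}^*$.

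Two points need checking along the way. First, the intermediate substitutions must preserve the relevant domains; this holds because $Q=Q_{\Hils[L]}=Q_C^{\transpose}\otimes Q_C$ commutes with $\Corep{U}$, $\DuCorep{V}$ and $\BrMultunit$ in the appropriate legs, as recorded just before the lemma through Lemma~\ref{lem:V_tilde_commute_Q} applied to the bicharacters $\projbichar$ and $\multunit[C]$; and $Q$ is clearly strictly positive on $\Hils[L]$. Second, concerning the braiding correction: either one absorbs $\widetilde{Z}$ into $\widetilde{\BrMultunit}$ outright, or one computes $\widetilde{Z}$ from~\eqref{eq:braiding-manag} — it admits an explicit expression analogous to the formula $Z=\projbichar_{14}^*\projbichar_{24}^*\projbichar_{13}^*\projbichar_{23}^*$ derived above, now built from the tilde of $\projbichar$ — and then checks that $\widetilde{\BrMultunit}\widetilde{Z}{}^*$ equals the operator in~\eqref{eq:mang-br-analysis}.

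I expect the main obstacle to be purely organisational: four factors, four legs, and a dense forest of conjugate Hilbert spaces and transposes to keep straight, together with the handful of domain and commutation verifications. Each individual step, however, is either the definition of manageability, a standard duality identity for manageable multiplicative unitaries, or a commutation of operators supported on disjoint legs; the only subtle choice is to order the four factors so that each pairing relation is applied precisely on the legs where its operators act, which the $Q$-commutation relations make possible.
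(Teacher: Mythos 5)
Your proposal is correct and follows essentially the same route as the paper's proof: reduce to elementary tensors, write \(Z^*\BrMultunit\) as the product \(\Flip_{23}\widetilde{\DuProjBichar}_{23}\Flip_{23}\,\ProjBichar^{\transpose\otimes\transpose}_{13}\,\Multunit[C]_{24}(\widetilde{\Multunit}{}^C_{14})^*\), insert resolutions of the identity (plain bases versus the \(Q_C\)\nb-adapted bases of Lemma~\ref{lem:nice_basis_Q} exactly where you indicate), and trade each factor for its tilde partner via its own manageability relation, absorbing \(\widetilde{Z}\) into \(\widetilde{\BrMultunit}\). The only difference is organisational: the paper processes the four factors in two pairs (yielding the auxiliary identities \eqref{eq:br-man_aux1} and~\eqref{eq:br-man_aux2}) before combining, rather than one at a time.
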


\begin{proof}
  It suffices to check~\eqref{eq:br_manag} when~\(x,u,y,v\)
  are tensor monomials: \(x=\bar{x}_{1}\otimes x_{2}\),
  \(u=\bar{u}_{1}\otimes u_{2}\),
  \(y=\bar{y}_{1}\otimes y_{2}\),
  \(v=\bar{v}_{1}\otimes v_{2}\),
  with \(x_{1}, x_{2}, y_{1}, y_{2}\in\Hils\),
  \(v_{1}, u_{2}\in\dom(Q_{C})\)
  and \(u_{1}, v_{2}\in \dom(Q_{C}^{-1})\).

  First we focus on the part
  \(\Flip_{23}\widetilde{\DuProjBichar}_{23}\Flip_{23}
  \ProjBichar^{\transpose\otimes\transpose}_{13}\).
  Let \((e_i)_{i\in\N}\)
  be an orthonormal basis of~\(\Hils\)
  in Lemma~\ref{lem:nice_basis_Q}.  Then \((\bar{e}_i)_{i\in\N}\)
  is an orthonormal basis of~\(\conj{\Hils}\) with
  \begin{equation}
    \label{eq:nice_conj_basis}
    \sum_{i\in\N} \ket{\conj{Q e_i}}\bra{\conj{Q^{-1}e_i}} = \Id_{\conj{\Hils}}.
  \end{equation}
  Equation~\eqref{eq:Multunit_manageable} for \(\ProjBichar\)
  and~\(\DuProjBichar\) gives
  \begin{align*}
    & \left<\bar{x}_{1}\otimes x_{2}\otimes \bar{u}_{1}\middle|
      \Flip_{23}\widetilde{\DuProjBichar}_{23}\Flip_{23} \ProjBichar^{\transpose\otimes\transpose}_{13}
      \middle| \bar{y}_{1}\otimes y_{2}\otimes \bar{v}_{1}\right>\\
    &=\sum \left<\bar{x}_{1}\otimes x_{2}\otimes \bar{u}_{1}\middle|
      \Flip_{23}\widetilde{\DuProjBichar}_{23}\Flip_{23}
      \cdot  \bigl(1\otimes 1\otimes \ket{e_i}\bra{e_i} \bigr) \cdot
      \ProjBichar^{\transpose\otimes\transpose}_{13}
      \middle| \bar{y}_{1}\otimes y_{2}\otimes \bar{v}_{1}\right>\\
    &=\sum \left<x_{2}\otimes\bar{u}_{1}\middle|
      \Flip\widetilde{\DuProjBichar}\Flip\middle|
      y_{2}\otimes\bar{e}_{i}\right>
      \left<\bar{x}_{1}\otimes\bar{e}_{i}\middle|
      \ProjBichar^{\transpose\otimes\transpose}\middle|
      \bar{y}_{1}\otimes \bar{v}_{1}\right>\\
    &=\sum \left<\bar{u}_{1}\otimes x_{2}\middle|
      \widetilde{\DuProjBichar}\middle|
      \bar{e}_{i}\otimes y_{2}\right>
      \left<y_{1}\otimes v_{1}\middle|
      \ProjBichar \middle|
      x_{1}\otimes e_{i}\right>\\
    &=\sum \left<e_{i}\otimes Q_{C}^{-1}(x_{2})\middle|
      \DuProjBichar\middle|
      u_{1}\otimes Q_{C}(y_{2})\right>
      \left<\bar{x}_{1}\otimes Q_{C}(v_{1})\middle|
      \widetilde{\ProjBichar}\middle|
      \bar{y}_{1}\otimes Q_{C}^{-1}(e_{i})\right>.
  \end{align*}
  Lemma~\ref{lem:V_tilde_commute_Q} shows that~\(Q_C\otimes C_C\)
  commutes with~\(\DuProjBichar\).  Hence
  \begin{align*}
    & \left<\bar{x}_{1}\otimes x_{2}\otimes \bar{u}_{1}\middle|
      \Flip_{23}\widetilde{\DuProjBichar}_{23}\Flip_{23} \ProjBichar^{\transpose\otimes\transpose}_{13}
      \middle| \bar{y}_{1}\otimes y_{2}\otimes \bar{v}_{1}\right>\\
    &=\sum \left<Q_{C}(e_{i})\otimes x_{2}\middle|
      \DuProjBichar\middle|
      Q_{C}^{-1}(u_{1})\otimes y_{2}\right>
      \left<\bar{x}_{1}\otimes Q_{C}(v_{1})\middle|
      \widetilde{\ProjBichar}\middle|
      \bar{y}_{1}\otimes Q_{C}^{-1}(e_{i})\right>\\
    &=\sum \left<x_{2}\otimes Q_{C}(e_{i})\middle|
      \ProjBichar^*\middle|
      y_{2}\otimes Q_{C}^{-1}(u_{1})\right>
      \left< y_{1}\otimes \conj{Q_{C}^{-1}(e_{i})}\middle|
      \widetilde{\ProjBichar}{}^{\transpose\otimes\transpose}\middle|
      x_{1}\otimes \conj{Q_{C}(v_{1})}\right>\\
    &=\sum \left<\bar{y}_{2}\otimes \conj{Q_{C}^{-1}(u_{1})}\middle|
      (\ProjBichar^*)^{\transpose\otimes\transpose} \middle|
      \bar{x}_{2}\otimes \conj{Q_{C}(e_{i})}\right>
      \left< y_{1}\otimes \conj{Q_{C}^{-1}(e_{i})}\middle|
      \widetilde{\ProjBichar}{}^{\transpose\otimes\transpose}\middle|
      x_{1}\otimes \conj{Q_{C}(v_{1})}\right>.
  \end{align*}
  Now~\eqref{eq:nice_conj_basis} gives
  \begin{multline}
    \label{eq:br-man_aux1}
    \left<\bar{x}_{1}\otimes x_{2}\otimes \bar{u}_{1}\middle|
      \Flip_{23}\widetilde{\DuProjBichar}_{23}\Flip_{23} \ProjBichar^{\transpose\otimes\transpose}_{13}
      \middle| \bar{y}_{1}\otimes y_{2}\otimes \bar{v}_{1}\right>
    \\=  \left<y_{1}\otimes \bar{y}_{2}\otimes \conj{Q_{C}^{-1}(u_{1})}
      \middle|
      (\ProjBichar^*)^{\transpose\otimes\transpose}_{23}
      \widetilde{\ProjBichar}{}^{\transpose\otimes\transpose}_{13}
      \middle|
      x_{1}\otimes \bar{x}_{2}\otimes \conj{Q_{C}(v_{1})}\right>.
  \end{multline}
  A similar computation gives
  \begin{multline}
    \label{eq:br-man_aux2}
    \left<\bar{x}_{1}\otimes x_{2}\otimes u_{2}\middle|
      \Multunit[C]_{23}(\widetilde{\Multunit}{}^C_{13})^*\middle|
      \bar{y}_{1}\otimes y_{2}\otimes v_{2}\right>
    \\= \left<y_{1}\otimes \bar{y}_{2}\otimes Q_{C}(u_{2})\middle|
      \widetilde{\Multunit}{}^C_{23} (\Multunit[C]_{13})^*\middle|
      x_{1}\otimes \bar{x}_{2}\otimes Q_{C}^{-1}(v_{2})\right>.
  \end{multline}
  Let \((e_{j})_{j\in\N}\)
  be an orthonormal basis of~\(\Hils\).
  Equations \eqref{eq:br-man_aux1} and~\eqref{eq:br-man_aux2} imply
  \begin{align*}
    &\phantom{{}={}}
      \left<\bar{x}_{1}\otimes x_{2}\otimes\bar{u}_{1}\otimes u_{2}\middle|
      \Flip_{23}\widetilde{\DuProjBichar}_{23}\Flip_{23}
      \ProjBichar^{\transpose\otimes\transpose}_{13} \Multunit[C]_{24}
      (\widetilde{\Multunit}{}^{C}_{14})^* \middle|
      \bar{y}_{1}\otimes y_{2}\otimes \bar{v}_{1}\otimes v_{2}\right>
    \\&=
        \begin{multlined}[t][.96\linewidth]
          \sum_{j,k} \Bigl<\bar{x}_{1}\otimes x_{2}\otimes\bar{u}_{1}\otimes
          u_{2}\Big|
          \Flip_{23}\widetilde{\DuProjBichar}_{23}\Flip_{23}
          \ProjBichar^{\transpose\otimes\transpose}_{13}
          \cdot \bigl(\ket{\bar{e}_{j}}\bra{\bar{e}_{j}}\otimes \ket{e_{k}}\bra{e_{k}}\otimes
          1 \otimes 1\bigr)
          \\\cdot
          \Multunit[C]_{24} (\widetilde{\Multunit}{}^{C}_{14})^* \Big|
          \bar{y}_{1}\otimes y_{2}\otimes \bar{v}_{1}\otimes v_{2}\Bigr>
        \end{multlined}
    \\&=
        \begin{multlined}[t][.96\linewidth]
          \sum_{j,k} \Bigl<\bar{x}_{1}\otimes x_{2}\otimes\bar{u}_{1}\Big|
          \Flip_{23}\widetilde{\DuProjBichar}_{23}\Flip_{23}
          \ProjBichar^{\transpose\otimes\transpose}_{13} \Big|
          \bar{e}_{j}\otimes e_{k}\otimes \bar{v}_{1}\Bigr>
          \\
          \Bigl<\bar{e}_{j}\otimes e_{k}\otimes u_{2} \Big|
          \Multunit[C]_{23} (\widetilde{\Multunit}{}^{C}_{13})^* \Big|
          \bar{y}_{1}\otimes y_{2}\otimes v_{2}\Bigr>
        \end{multlined}
    \\&=\begin{multlined}[t][.96\linewidth]
      \sum_{j,k} \left<y_{1}\otimes \bar{y}_{2}\otimes Q_{C}(u_{2})\middle|
            \widetilde{\Multunit}{}^{C}_{23} (\Multunit[C]_{13})^*\middle|
            e_{j}\otimes \bar{e}_{k}\otimes Q_{C}^{-1}(v_{2})\right>
          \\
          \left<e_{j}\otimes \bar{e}_{k}\otimes \conj{Q_{C}^{-1}(u_{1})}
            \middle|
            (\ProjBichar^*)^{\transpose\otimes\transpose}_{23}
            \widetilde{\ProjBichar}{}^{\transpose\otimes\transpose}_{13}
            \middle|
            x_{1}\otimes \bar{x}_{2}\otimes \conj{Q_{C}(v_{1})}\right>
        \end{multlined}
    \\&=
        \begin{multlined}[t][.96\linewidth]
          \Bigl< y_{1}\otimes \bar{y}_{2}\otimes \conj{Q_{C}^{-1}(u_{1})}
          \otimes Q_{C}(u_{2})
          \Bigm|
          \widetilde{\Multunit}{}^{C}_{24} (\Multunit[C]_{14})^*
          (\ProjBichar^*)^{\transpose\otimes\transpose}_{23}
          \widetilde{\ProjBichar}{}^{\transpose\otimes\transpose}_{13}
          \Bigm|\\
          x_{1}\otimes\bar{x}_{2}\otimes \conj{Q_{C}(v_{1})}\otimes
          Q_{C}^{-1}(v_{2}) \Bigr>
        \end{multlined}
  \end{align*}
  This is the equation we have to check.
\end{proof}

\begin{bibdiv}
  \begin{biblist}
    \bibselect{references}
  \end{biblist}
\end{bibdiv}
\end{document}